\def\Hom{\mathop{\rm Hom}\nolimits}
\def\End{\mathop{\rm End}\nolimits}
\definecolor{darkpastelred}{rgb}{0.76, 0.23, 0.13}
\definecolor{darkred}{rgb}{0.55, 0.0, 0.0}
\definecolor{darkmagenta}{rgb}{0.55, 0.0, 0.55}
\definecolor{coolblack}{rgb}{0.0, 0.18, 0.39}
\definecolor{ceruleanblue}{rgb}{0.16, 0.32, 0.75}
\newtheorem{theorem}{Theorem}[section]
\newtheorem{lemma}[theorem]{Lemma}
\newtheorem{proposition}[theorem]{Proposition}
\newtheorem{corollary}[theorem]{Corollary}
\theoremstyle{definition}
\newtheorem{definition}[theorem]{Definition}
\newtheorem{remark}[theorem]{Remark}
\newtheorem{notation}[theorem]{Notation}
\newcommand{\Bun}{\mathrm{Bdl}}
\newcommand{\G}{\Gamma}
\newcommand{\CR}{\mathcal{R}}
\newcommand{\QQ}{\mathcal{Q}}
\newcommand{\ep}{\varepsilon}
\newcommand{\N}{\mathbb{N}}
\newcommand{\Z}{\mathbb{Z}}
\newcommand{\Q}{\mathbb{Q}}
\newcommand{\TT}{\mathbb{T}}
\newcommand{\T}{\mathbb{T}}
\newcommand{\R}{\mathbb{R}}
\newcommand{\C}{\mathbb{C}}
\newcommand{\CCC}{\mathbb{C}}
\newcommand{\id}[1]{{\rm id}_{#1}}
\begin{document}

\title{Non-stable groups }
\author{Marius Dadarlat}\address{MD: Department of Mathematics, Purdue University, West Lafayette, IN 47907, USA}\email{mdd@purdue.edu}	
\begin{abstract} In this article we discuss cohomological obstructions to two kinds of group stability.
In the first part, we show that  residually finite groups $\Gamma$ which arise as  fundamental groups of compact Riemannian manifolds with strictly negative sectional curvature  are not uniform-to-local stable with respect to the operator norm if their even Betti numbers  $b_{2i}(\Gamma)$ do not vanish. In the second part, we show that  non-vanishing of Betti numbers $b_{i}(\Gamma)$ in dimension $i>1$  obstructs  $C^*$-algebra stability for  groups approximable by unitary matrices
  that admit a coarse embedding in a Hilbert space.

\end{abstract}

\thanks{This research was partially supported by NSF} 
\date{\today}
\maketitle
\section{Introduction}

For a countable discrete group $\G$ we consider several natural stability properties relative to unitary groups $U(n)$ equipped with the uniform norm. We also consider stability properties of  $\G$ with respect to  unitary groups  of $C^*$-algebras. The reader is referred to the survey papers by  Arzhantseva \cite{Arzhantseva} and  Thom \cite{Thom:ICM} for an introduction to the approximation and stability properties
of groups. Just like in our earlier paper on this subject  \cite{CCC}, we rely on ideas  due to Kasparov~\cite{Kas:inv},  Connes, Gromov and Moscovici \cite{CGM:flat},  Gromov \cite{Gromov:reflections, Gromov:curvature},
 Tu\cite{Tu:gamma} and Kubota~\cite{Kubota2}. In addition we use results of Hanke and Schick \cite{Hanke}, \cite{Hanke-Schick}, Hunger~\cite{Hunger} and  Baird and Ramras \cite{Ramras}.

\begin{definition}\label{def:main} For a countable discrete group $\G$ we consider
sequences  $\{\rho_n\}$ of unital maps  and sequences  $\{\pi_n\}$ of unitary group representations  with $\rho_n,\pi_n  :\Gamma \to U(n).$

The sequence $\{\pi_n\}$ approximates $\{\rho_n\}$ \emph{locally} if

 \[\lim_{n\to \infty}\, \|\rho_n(s)-\pi_n(s)\|=0,  \,\, \text{for all}\,\,\, s\in \Gamma.\]

 The sequence $\{\pi_n\}$ approximates $\{\rho_n\}$ \emph{uniformly} if

\[{ \lim_{n\to \infty} \,\sup_{s\in \Gamma} \|\rho_n(s)-\pi_n(s)\|=0}.\]

\begin{itemize}
	\item [(a)] $\G$ is \emph{locally stable} if  any sequence $\{\rho_n\}$ which satisfies
\begin{equation*}\label{U}
{ \lim_{n\to \infty} \,\,\|\rho_n(st)-\rho_n(s)\rho_n(t)\|=0}, \,\, \text{for all}\,\, s,t \in \G
\end{equation*}
can be approximated   \emph{locally}  by a sequence $\{\pi_n\}$	 of unitary representations.
\item [(b)]  $\Gamma$ is \emph{uniformly stable}
if  any sequence $\{\rho_n\}$ which satisfies
\begin{equation*}\label{U}
{ \lim_{n\to \infty} \,\,\sup_{s,t\in \Gamma} \|\rho_n(st)-\rho_n(s)\rho_n(t)\|=0}
\end{equation*}
can be approximated  \emph{uniformly}   by a sequence $\{\pi_n\}$	 of unitary  representations.
\item [(c)]  $\Gamma$ is \emph{uniform-to-local stable}
if  any sequence $\{\rho_n\}$  which satisfies  the assumption from (b)
can be approximated  \emph{locally}   by a sequence $\{\pi_n\}$	 of  unitary representations.

\item [(d)]  $\Gamma$ is \emph{local-to-uniform stable}
if  any sequence $\{\rho_n\}$ which satisfies  the assumption from (a)
can be approximated  \emph{uniformly}   by a sequence $\{\pi_n\}$	 of unitary  representations.
\end{itemize}
 One may visualize the conditions (a), (b) and (c) in a diagram:
\[
\xymatrix{
\left(\sup\limits_{s,t\in \Gamma} \|\rho_n(st)-\rho_n(s)\rho_n(t)\|\to 0\right)\ar@{=>}[rr]^-{\text{uniformly stable}}  \ar@{=>}[drr]^-{\hskip 2cm \text{uniform-to-local  stable}}& &
{\left(\sup\limits_{s\in \Gamma}\|\rho_n(s)-\pi_n(s)\|\to 0\right)}
\\
{ \|\rho_n(st)-\rho_n(s)\rho_n(t)\|\to 0}_{\,\, \forall s,t}
 \ar@{=>}[rr]_-{\text{locally stable}} & & {{\|\rho_n(s)-\pi_n(s)\|\to 0}_{\,\, \forall s} }
}
\]
\end{definition}
\vskip 8pt

The stability properties  considered in Definition~\ref{def:main} are quite different in nature.
Local-to-uniform stability is not too interesting for it is satisfied only by finite groups.    While it is clear that  both uniform stability and local stability imply uniform-to-local stability, the study of  these three properties is more challenging.
Local stability is referred to as \emph{matricial stability} in the paper of Eilers, Shulman and S{\o}rensen \cite{ESS-published}. We showed that the nonvanishing of rational cohomology in even dimensions is an obstruction to local stability for large classes of groups, including all amenable groups and all linear groups, \cite{CCC} and \cite{DDD}.
  Bader,  Lubotzky, Sauer  and Weinberger  showed that lattices in semisimple real Lie groups are typically not locally stable \cite{BLSW}.

Uniform stability is called  \emph{Ulam stability}   in the article of  Burger, Ozawa and Thom \cite{BOT}. By a classical result of Kazhdan  discrete amenable groups are uniformly stable, \cite{Kazhdan-epsilon}.
In contrast, an amenable group $\G$ is not locally stable if $H^{2i}(\G,\R)\neq 0$ for some $i>1$, \cite{CCC}.  A recent  paper of Glebsky, Lubotzky, Monod and Rangarajan \cite{Ulam-new} studies uniform stability for cocompact lattices in semisimple Lie groups by means of asymptotic cohomology.

 Uniform stability is a stronger condition than  uniform-to-local stability.  Indeed,
 if $\G$ has a finite index subgroup isomorphic to
a free group $\mathbb{F}_k$, $k\geq 2,$ then $\G$ is locally stable \cite{ESS-published}, \cite{BLSW} and hence uniform-to-local  stable, but
 $\G$ is not uniformly stable.
More generally, it was shown in  \cite{BOT} that if the comparison map
$j:H^2_b(\Gamma, \R) \rightarrow H^2(\Gamma, \R)$
is not injective, then $\Gamma$ is not uniformly stable, and consequently,
 the non-elementary hyperbolic groups are not uniformly stable, since $j$ is not injective for such groups \cite{EpFu}.

The surface groups $\Gamma_g$ of genus $g>1$ were the first groups shown not be uniformly stable, \cite{Kazhdan-epsilon}.
One observes that ~Kazhdan's  proof, which  exploits the nonvanishing of $H^2(\Gamma_g,\R)$,  shows that, in fact,   the groups $\Gamma_g$ of genus $g >1$
are not even uniform-to-local stable. Motivated by this observation,  in the first part of this paper, we point out that many of the cocompact lattices in the Lorentz group $SO_0(n,1)$, $n>1$ are not uniform-to-local stable.
This will follow from the following theorem inspired by an idea of Gromov~\cite[p.166]{Gromov-large}:

\begin{theorem}\label{thm:main-result}
  Let $M$ be a closed connected Riemannian manifold with strictly negative sectional curvature  and residually finite fundamental group. If  $b_{2i}(M)\neq  0$ for some $i>0$, then $\pi_1(M)$ is not uniform-to-local stable.
\end{theorem}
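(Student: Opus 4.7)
The plan is to produce a sequence of unital maps $\rho_n:\Gamma\to U(k_n)$ that is \emph{uniformly} almost multiplicative but cannot be locally approximated by a sequence of unitary representations, the obstruction being detected by the $2i$-th component of the Chern character. By Cartan-Hadamard the universal cover $\widetilde M$ is contractible, so $M$ is a finite model for $B\Gamma$ with $\Gamma=\pi_1(M)$; in particular $b_{2i}(\Gamma)=b_{2i}(M)\neq 0$, and we may fix a nonzero homology class $\alpha\in H_{2i}(M;\R)$.

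The main technical step is the construction of almost-flat Hermitian bundles on $M$ whose Chern character detects $\alpha$. Following Gromov, Connes-Gromov-Moscovici \cite{CGM:flat}, and Hanke-Schick \cite{Hanke-Schick}, strict negativity of the sectional curvature makes geodesic triangles in $\widetilde M$ exponentially thin, so that parallel transport on $\widetilde M$ is \emph{almost} multiplicative on any prescribed scale; combined with residual finiteness, which supplies finite covers of arbitrarily large injectivity radius, this yields for each $\varepsilon>0$ a Hermitian vector bundle $E\to M$ with curvature of operator norm at most $\varepsilon$ whose Chern character evaluates nontrivially on $\alpha$. Transcribing a sequence of such bundles with $\varepsilon_n\to 0$ into the parallel transport data on a fundamental domain produces unital maps $\rho_n:\Gamma\to U(k_n)$ satisfying
\[\sup_{s,t\in\Gamma}\|\rho_n(st)-\rho_n(s)\rho_n(t)\|\xrightarrow[n\to\infty]{}0,\]
with associated $K$-theory classes $[E_n]\in K^0(M)$ for which $|\langle \mathrm{ch}(E_n),\alpha\rangle|$ stays bounded away from $0$.

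Suppose, for contradiction, that $\Gamma$ is uniform-to-local stable. Then there exist unitary representations $\pi_n:\Gamma\to U(k_n)$ with $\|\rho_n(s)-\pi_n(s)\|\to 0$ for every $s\in\Gamma$. Each $\pi_n$ determines a \emph{flat} Hermitian bundle $F_n\to M$, for which Chern-Weil theory concentrates the real Chern character in degree zero; hence $\langle \mathrm{ch}(F_n),\alpha\rangle=0$ for all $n$. Since $\Gamma$ is finitely generated and the $K$-theory class $[E_n]$ depends only on the restriction of $\rho_n$ to the finite set of generators used to assemble $E_n$, a small uniform perturbation (followed by polar decomposition to return to unitaries) gives a homotopy of asymptotic homomorphisms from $\rho_n$ to $\pi_n$, forcing $[E_n]=[F_n]\in K^0(M)$ for $n$ large. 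This contradicts the previous paragraph.

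The principal obstacle is the main technical step: producing \emph{uniformly} almost-multiplicative $\rho_n$ whose almost-flat bundles have nontrivial $2i$-th Chern class. Both geometric hypotheses enter essentially. Strict negativity of the sectional curvature is what upgrades the parallel-transport cocycle from pointwise to uniformly almost multiplicative, via exponential contraction of hyperbolic triangles and, conceptually, via the Kasparov-Tu gamma element acting as a $KK$-theoretic unit for $\Gamma$. Residual finiteness is what permits passage to finite covers of arbitrarily large injectivity radius needed to make the rational Chern pairing nonzero, in the spirit of Hanke-Schick.
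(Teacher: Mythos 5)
Your overall architecture matches the paper's: build uniformly almost multiplicative maps $\rho_n$ from parallel transport on almost flat bundles (the uniformity coming from the uniform area bound on geodesic triangles in strictly negative curvature, via Buser--Karcher and Gauss--Bonnet), observe that representations locally close to $\rho_n$ on the transition cocycle yield the same class in $K^0(M)$, and contradict the vanishing of rational Chern classes of flat bundles. But there is a genuine gap at what you yourself flag as the main technical step: producing, for every $\varepsilon>0$, an $\varepsilon$-flat Hermitian bundle whose Chern character pairs nontrivially with a \emph{prescribed} class $\alpha\in H_{2i}(M;\R)$, where $2i$ may be strictly below $\dim M$. The mechanism you invoke --- residual finiteness supplying finite covers of large injectivity radius, in the spirit of Hanke--Schick enlargeability --- produces almost flat bundles detecting the fundamental class (equivalently a nonzero top Chern class when $\dim M$ is even); it does not produce bundles detecting an arbitrary middle-dimensional class, and you offer no argument for why it would. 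This is precisely where the paper does something else: it picks a nontorsion $y\in\widetilde K^0(M)$ with $\mathrm{ch}(y)$ nonzero in degree $2i$ and lifts it through the dual assembly map $\nu:K^0(C^*(\Gamma))_{qd}\to K^0(B\Gamma)$, whose surjectivity on quasidiagonal classes rests on Kasparov's $\gamma$-element for nonpositively curved manifolds together with residual finiteness (via Kubota's intermediate quasidiagonal $C^*$-algebra, which allows the second member of the Cuntz pair to be a sum of finite-dimensional representations). This produces $(\mathcal U,\varepsilon_n)$-flat bundles $L_{\varphi_n}$ and flat bundles $L_{\psi_n}$ with $[L_{\varphi_n}]-[L_{\psi_n}]=y$ for all $n$. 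You mention the $\gamma$-element only in passing and attach it to the wrong step (uniform almost multiplicativity, which is purely the negative-curvature triangle estimate); its actual role is to guarantee that a class with nonvanishing $\mathrm{ch}_{2i}$ is realized by almost flat bundles at all.

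A secondary point: passing from a sequence of $\varepsilon_n$-flat bundles $E_n$ to maps $\rho_n$ with $L_{\rho_n}\cong E_n$ requires the approximate monodromy correspondence (the bundle reassembled from the holonomy of an $\varepsilon$-flat connection is isomorphic to the original once $\varepsilon$ is small, uniformly in the rank); this is Theorem 3.12 of the paper, proved there by a contradiction argument with Hanke's groupoid machinery, and is not automatic, though it is citable. Granting that and the corrected source of the almost flat classes, the remainder of your argument (Milnor's vanishing for flat bundles, stability of the $K$-theory class under small perturbation on the finite cocycle set) goes through essentially as in the paper.
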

Theorem~\ref{thm:main-result} is a direct consequence of Theorem~\ref{thm:main-resultA}  from Section~\ref{sect:4}.
Concerning the assumption on Betti numbers, observe that  if $M$ is orientable and  $\dim M=2m$ then $b_{2m}(M)=1$ while  if $M$ is orientable and $\dim M=2m+1$,  then it suffices to require that
 $b_i(M)> 0$ for some $1 \leq i \leq 2m,$ since $b_i(M)=b_{2m+1-i}(M)$ by Poincar{\'e} duality and  either $i$ or $2m+1-i$ must be even.

Recall that a compact lattice in a semisimple real Lie group $G$  is a discrete subgroup $\G$ of $G$ such that the quotient space $G/\G$ is compact.
The $n$-dimensional hyperbolic space $\mathbb{H}^n$, $n\geq 2,$ has constant sectional curvature equal to  $-1$. The connected component of the identity of the group of orientation preserving isometries of  $\mathbb{H}^n$ is the Lorentz group $SO_0(n,1)$. Since $\mathbb{H}^n$ is isometric to the symmetric space $SO_0(n,1)/ SO(n)$, if $\G$ is a torsion free cocompact lattice in $SO_0(n,1),$
then $M=\Gamma\setminus SO_0(n,1)/ SO(n)$ is an orientable closed connected Riemannian manifold with sectional curvature $=-1$.  Moreover, $\G=\pi_1(M)$  is finitely generated  by co-compactness and hence it is residually finite by Malcev's theorem since $SO_0(n,1)\subset \mathrm{GL}(n,\R)$. Thus one obtains the following:
\begin{corollary}\label{cor:Lorentz}
  Let $\Gamma$ be a torsion free cocompact lattice in $SO_0(n,1)$.
  \begin{itemize}
  \item[(i)] If $n$ is even then $\Gamma$ is not uniform-to-local stable.
  \item [(ii)] If $n$ is odd and  $b_{i}(\Gamma)> 0$ for some $i>0$ then $\Gamma$ is not uniform-to-local stable.
  \end{itemize}
 \end{corollary}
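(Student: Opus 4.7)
The plan is to recognize that the corollary is an immediate application of Theorem~\ref{thm:main-result} to the closed Riemannian manifold $M=\Gamma\backslash SO_0(n,1)/SO(n)=\Gamma\backslash\mathbb{H}^n$. Much of the work has already been carried out in the paragraph preceding the corollary statement, so my task is mostly to check carefully that the geometric and cohomological hypotheses of Theorem~\ref{thm:main-result} are satisfied in each of the two parity cases.

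First I would record the geometric setup. Since $\Gamma$ is torsion-free and acts freely, properly discontinuously and cocompactly on the contractible symmetric space $\mathbb{H}^n$ by orientation-preserving isometries, the quotient $M$ is a closed connected orientable Riemannian manifold of constant sectional curvature $-1$, with $\pi_1(M)=\Gamma$. Residual finiteness of $\Gamma$ follows from Malcev's theorem, since $\Gamma$ is finitely generated (cocompactness) and linear via the inclusion $SO_0(n,1)\subset \mathrm{GL}(n+1,\R)$. Thus $M$ fulfils the geometric premises of Theorem~\ref{thm:main-result} irrespective of the parity of $n$. Moreover, the contractibility of $\mathbb{H}^n$ makes $M$ aspherical, so that $M$ is a $K(\Gamma,1)$-space and $b_i(\Gamma)=b_i(M)$ for all $i$.

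It then remains to verify the hypothesis $b_{2j}(M)\neq 0$ for some $j>0$ in each case. For case~(i), when $n=2m$ is even, Poincar\'e duality on the orientable closed manifold $M$ yields $b_n(M)=b_{2m}(M)=1$, and $m>0$ since $n>1$; so the theorem applies with $j=m$. For case~(ii), when $n=2m+1$ is odd and $b_i(\Gamma)>0$ for some $0<i<n$, Poincar\'e duality gives $b_i(M)=b_{2m+1-i}(M)>0$; because $n$ is odd, exactly one of the indices $i$ and $2m+1-i$ is even, and both lie in $\{1,\dots,2m\}$, producing a positive even Betti number $b_{2j}(M)$ with $j\geq 1$. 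In either case Theorem~\ref{thm:main-result} applies to $M$ and delivers the conclusion that $\Gamma=\pi_1(M)$ is not uniform-to-local stable.

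There is no substantial obstacle here: the proof is a straightforward verification of hypotheses, and the only mildly delicate point is the parity-plus-Poincar\'e-duality argument in case~(ii), which is already previewed in the remark immediately following Theorem~\ref{thm:main-result}. I would simply write out the identifications above and invoke the theorem.
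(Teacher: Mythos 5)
Your proposal is correct and follows exactly the paper's own route: the paper proves the corollary in the paragraph preceding its statement by forming $M=\Gamma\backslash SO_0(n,1)/SO(n)$, invoking Malcev's theorem for residual finiteness, and using the Poincar\'e-duality parity observation recorded right after Theorem~\ref{thm:main-result} before applying that theorem. The only (sensible) refinement you add is reading the hypothesis in (ii) as $0<i<n$, which matches the paper's intended range $1\leq i\leq 2m$.
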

The  corollary reproves Kazhdan's result since $\Gamma_g \subset SO_0(2,1)$, $g>1$. In order to apply the corollary to other examples, let us note that by Selberg's lemma, any cocompact discrete subgroup $\Lambda$ of $SO_0(n,1)$
  has a finite index torsion free subgroup $\G$.
 Thus, in order to apply Corollary~\ref{cor:Lorentz}(ii)
 to $\G,$ it remains to realize the condition on Betti numbers.

It was shown in  \cite{BLSW} that if $\G$ is a cocompact lattice in a real semisimple Lie group $G$ which is not locally isomorphic to either $SO(n,1)$ for $n$ odd or $SL_3(\R)$, then $b_{2i}(\G)>0$ for some $i>0$, so that $\G$ is not locally stable by \cite{CCC}.
 Concerning lattices in $SO(n,1)$,  the following  nonvanishing result is established in \cite{BLSW}.
 \begin{theorem}[Cor.\,3.13 of \cite{BLSW}]\label{thm:BLSW}
 Let $\Lambda$ be a cocompact lattice in $SO(n,1)$ with $n>1$ odd.
Suppose either (i) $n=3$ or (ii) $n=4m+1$ or (iii) $n=4m+3$ and $\Lambda$ is arithmetic but not of the form ${}^6D_4$ if $n=7.$
  Then there is a finite index subgroup $\Lambda_1\leq\Lambda$ such that for any
  finite index subgroup $\G\leq\Lambda_1$ there is $i>0$ such that $b_{2i}(\G)>0$. In particular, the group $\G$ is not locally stable.
 \end{theorem}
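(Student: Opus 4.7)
The final clause that $\Gamma$ is not locally stable follows at once from the Betti-number nonvanishing via the obstruction theorem of \cite{CCC}, so the task is to produce a single finite-index $\Lambda_1\leq\Lambda$ with $b_{2i}(\Lambda_1)>0$ for some $i>0$. This is enough: for every further finite-index $\Gamma\leq\Lambda_1$, composition of restriction with the transfer map equals multiplication by $[\Lambda_1:\Gamma]$ on $H^*(\Gamma;\R)$, so the restriction $H^{2i}(\Lambda_1;\R)\hookrightarrow H^{2i}(\Gamma;\R)$ is injective and the same $2i$ works uniformly for all finite-index $\Gamma\leq\Lambda_1$.

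For case (i), $n=3$, the intended input is Agol's virtual Haken theorem (building on Wise's special cube complex machinery and the surface subgroup theorem of Kahn--Markovic): every cocompact $\Lambda\subset SO(3,1)$ has a finite-index subgroup $\Lambda_1$ with $b_1(\Lambda_1)>0$, and since $\Lambda_1\backslash \mathbb{H}^3$ is a closed orientable $3$-manifold, Poincar\'e duality gives $b_2(\Lambda_1)=b_1(\Lambda_1)>0$. For cases (ii) and (iii) the natural framework is Matsushima's formula, which decomposes $H^*(\Gamma;\R)$, for cocompact $\Gamma\subset G=SO(n,1)$, as a direct sum over irreducible unitary $G$-representations $\pi$ appearing in $L^2(\Gamma\backslash G)$ of their $(\mathfrak{g},K)$-cohomology $H^*(\mathfrak{g},K;\pi)$. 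By the Vogan--Zuckerman classification, the cohomological $\pi$ for $SO(n,1)$ with $n$ odd are certain $A_\mathfrak{q}(\lambda)$-modules, and among these there are modules contributing in some even degree $2i$ with $0<2i<n$. It therefore remains to certify positive multiplicity of such a $\pi$ in $L^2(\Lambda_1\backslash G)$ for a suitable congruence cover: in case (ii) this is furnished by Millson's construction of nontrivial totally geodesic homology cycles dual to the relevant $A_\mathfrak{q}$, and in case (iii) by the theta-correspondence constructions of Li and Li--Millson together with the arithmetic nonvanishing theorems of Bergeron--Clozel for deep congruence subgroups.

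The hardest step is the production and certification of nontriviality of the geometric cycles in case (iii), which requires a fine analysis of the $\Q$-structure on the algebraic group underlying the arithmetic lattice and the existence of suitable rational reductive subgroups of the form $SO(p,1)\times SO(q,1)$ whose orbits sweep out the cycles. The exclusion of the triality form ${}^6 D_4$ at $n=7$ is precisely the situation in which these rational subgroups fail to descend to the correct $\Q$-form, since the outer Galois action of order $3$ permutes the two half-spin representations on which the standard cycle construction depends; BLSW bypass this exceptional case by excluding it from the statement rather than supplying a replacement argument.
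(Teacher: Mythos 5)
First, a point of comparison: the paper offers no proof of this statement. It is imported verbatim as Corollary 3.13 of \cite{BLSW}, so the only ``proof'' present is the citation, and your proposal is really an attempted reconstruction of the argument in \cite{BLSW}. Several pieces of your reconstruction are sound: the transfer argument (restriction followed by corestriction is multiplication by the index, so $H^{2i}(\Lambda_1;\R)\to H^{2i}(\G;\R)$ is injective for finite-index $\G\leq\Lambda_1$, and one nonvanishing even Betti number of $\Lambda_1$ propagates to all of its finite-index subgroups); the case $n=3$ via Agol's virtual positive first Betti number theorem plus Poincar\'e duality; and the final deduction of non-local-stability from $b_{2i}>0$ via \cite{CCC}, which applies because cocompact lattices in $SO(n,1)$ are finitely generated linear groups, hence MF and admitting a $\gamma$-element.

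The genuine gap is in case (ii). There the hypothesis is only that $n=4m+1$: no arithmeticity of $\Lambda$ is assumed, and by Gromov--Piatetski-Shapiro non-arithmetic cocompact lattices exist in $SO(n,1)$ for every $n\geq 2$. Your proposed argument for (ii) --- certifying positive multiplicity of a cohomological $A_{\mathfrak{q}}(\lambda)$-module in $L^{2}(\Lambda_1\backslash G)$ on a suitable \emph{congruence} cover via Millson's totally geodesic cycles --- only makes sense for arithmetic lattices, where congruence subgroups and the requisite rational reductive subgroups exist. As written, your case (ii) therefore establishes only the arithmetic subcase, which is strictly weaker than the claim and would render the distinction between (ii) and (iii) pointless; whatever \cite{BLSW} do for $n\equiv 1\ (\mathrm{mod}\ 4)$ must apply to arbitrary cocompact lattices and so cannot be of this automorphic nature. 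The closing paragraph on the exclusion of ${}^{6}D_{4}$ is speculation rather than argument, though nothing is required there since that case is excluded from the statement. More globally, your text is a survey of very deep external inputs rather than a proof, which is defensible given that the paper itself merely cites \cite{BLSW}; but the arithmeticity mismatch in case (ii) is a substantive error, not a matter of level of detail.
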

By Theorem~\ref{thm:main-result}, we deduce:
  \begin{corollary}\label{cor:Lorentzz}
  The groups $\G$ as in Theorem~\ref{thm:BLSW} are not  uniform-to-local stable.
 \end{corollary}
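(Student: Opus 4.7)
The plan is to reduce Corollary~\ref{cor:Lorentzz} to a direct application of Theorem~\ref{thm:main-result}, by realizing each $\Gamma$ in Theorem~\ref{thm:BLSW} as the fundamental group of a closed hyperbolic manifold. The only point that needs care is to rule out torsion in $\Gamma$, and this is handled by invoking Selberg's lemma.

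First I would replace $\Lambda_1$ by its intersection with a torsion-free finite index subgroup of the ambient lattice $\Lambda\subset SO(n,1)$, which exists because $SO(n,1)$ is linear. The intersection is still a finite index subgroup of $\Lambda_1$, so by Theorem~\ref{thm:BLSW} every finite index subgroup $\Gamma$ of this new $\Lambda_1$ still satisfies $b_{2i}(\Gamma)>0$ for some $i>0$, and now any such $\Gamma$ is automatically torsion free. (If one wishes to keep $\Gamma$ itself arbitrary, one can simply intersect it with the torsion-free sublattice and pass to this finite index subgroup; failure of uniform-to-local stability then lifts back via the standard induction argument.)

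Next, for a torsion-free cocompact $\Gamma\subset SO_0(n,1)$ the action on $\mathbb{H}^n=SO_0(n,1)/SO(n)$ is free, proper and cocompact by isometries. Setting $M:=\Gamma\backslash\mathbb{H}^n$, I would observe that $M$ is a closed connected Riemannian manifold of constant sectional curvature $-1$ with $\pi_1(M)=\Gamma$, and that $M$ is an Eilenberg--MacLane space $K(\Gamma,1)$ since $\mathbb{H}^n$ is contractible; hence $b_{2i}(M)=b_{2i}(\Gamma)>0$ for the same $i>0$. Moreover $\Gamma$ is finitely generated by cocompactness and linear as a subgroup of $SO(n,1)\subset\mathrm{GL}(n+1,\mathbb{R})$, hence residually finite by Malcev's theorem. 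All hypotheses of Theorem~\ref{thm:main-result} are now verified, and that theorem forces $\Gamma$ to fail uniform-to-local stability.

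There is essentially no hard step in this reduction: all of the analytic and geometric difficulty is absorbed into Theorem~\ref{thm:main-result}, while the cohomological input is supplied by Theorem~\ref{thm:BLSW}. The only mild subtlety is ensuring the absence of torsion so that $\Gamma$ actually appears as the fundamental group of a smooth closed negatively curved manifold, and this is dispatched by the Selberg's lemma argument above.
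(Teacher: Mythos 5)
Your proposal is correct and follows essentially the same route as the paper, which deduces the corollary directly from Theorem~\ref{thm:main-result} using exactly the ingredients assembled in the surrounding discussion: Selberg's lemma for torsion-freeness, Malcev's theorem for residual finiteness, and the identification of $M=\Gamma\backslash \mathbb{H}^n$ as a closed manifold of constant curvature $-1$ with $\pi_1(M)=\Gamma$ and $M=B\Gamma$. The only extra item is your parenthetical claim that failure of uniform-to-local stability lifts from a finite-index subgroup via induction, which the paper neither invokes nor proves; it is safer to read the corollary, as the paper's preceding discussion indicates, as applying to the torsion-free finite-index subgroups $\Gamma$ supplied by Selberg's lemma, for which your main argument is complete.
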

  For concrete examples one may consider
 $G=SO_0(x_1^2+\cdots +x_{n}^2-\sqrt{p}\, x_{n+1}^2,\R)\cong SO_0(n,1),$ where $p$ is a square free integer.  Let  $\mathcal{O}$ be ring of integers of $\mathbb{Q}\sqrt{p}$.   Then $\mathcal{O}=\Z+\Z\sqrt{p}$ if $p\not\equiv 1$ (mod 4) and $\mathcal{O}=\{\frac{a+b\sqrt{p}}{2}\colon a,b \in \Z,\, a-b \equiv 0\, (\text{mod 2})\}$ if $p\equiv 1$ (mod 4) and
$G_{\mathcal{O}}$ is a cocompact arithmetic lattice in $G$, \cite{Borel}.
By a result of Li and Millson~\cite{Li-Millson}, any arithmetic lattice  in $SO_0(n,1)$, $n\neq 3,7$ contains a congruence subgroup $\G$ such that $b_1(\G)>0$.

\vskip 8pt

In the second part of the paper we revisit local stability and discuss $C^*$-stability  of discrete groups \cite{ESS-published}, a property which can be viewed as local stability relative to $C^*$-algebras.

\begin{definition}\label{def: C*stable}
  A  group $\Gamma$ is \emph{$C^*$-stable}
 if for any sequence  of  unital maps
  $\{\rho_n:\Gamma \to U(B_n)\}_{n}$, where $B_n$ are unital $C^*$-algebras   such that
 \[
{ \lim_{n\to \infty}  \|\rho_n(st)-\rho_n(s)\rho_n(t)\|=0}, \quad \text{for all}\,\, s,t\in \G,\]
there exists a sequence of  group homomorphisms  $\{\pi_n:\Gamma \to U(B_n)\}_{n}$ satisfying
\[{\lim_{n\to \infty}\, \|\rho_n(s)-\pi_n(s)\|=0}, \quad \text{for all}\,\, s\in \G.\]
\end{definition}
We note that local stability corresponds to $C^*$-stability relative to finite dimensional $C^*$-algebras.
As discussed earlier, nonvanishing of even-dimensional  rational cohomology is an obstruction to matricial stability for many groups.
 Prompted by a question of Dima Shlyakhtenko concerning the possible role of  odd-dimensional cohomology in group stability, we show the following:
  \vskip 4pt
  \begin{theorem}\label{thm-cstable}
{Let $\Gamma$ be a countable discrete MF-group  that admits a $\gamma$-element.
  If $H^{k}(\Gamma,\mathbb{Q})\neq 0$ for some $k>1$,
  then $\Gamma$  is not $C^*$-stable.}
  \end{theorem}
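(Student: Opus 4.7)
My plan is a proof by contradiction. Suppose $\Gamma$ is $C^*$-stable and fix a non-zero class $[\alpha]\in H^k(\Gamma,\mathbb{Q})$ with $k>1$. The MF hypothesis supplies an asymptotically multiplicative sequence $\rho_n:\Gamma\to U(k_n)$, which assembles into a $*$-homomorphism $\bar\rho:C^*(\Gamma)\to\mathcal{Q}:=\prod_n M_{k_n}(\mathbb{C})/\bigoplus_n M_{k_n}(\mathbb{C})$. Applying Definition~\ref{def: C*stable} with $B_n=M_{k_n}(\mathbb{C})$ produces honest representations $\pi_n:\Gamma\to U(k_n)$ satisfying $\|\rho_n(s)-\pi_n(s)\|\to 0$ for every $s\in\Gamma$, so that $C^*(\Gamma)$ is residually finite-dimensional and $\Gamma$ is locally stable. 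For even $k\geq 2$, this conclusion is already incompatible with $H^k(\Gamma,\mathbb{Q})\neq 0$ by the local-stability obstructions for MF groups admitting a $\gamma$-element, cf.\ \cite{CCC,DDD}. Hence the substantive case, motivated by Shlyakhtenko's question, is $k$ odd with $k\geq 3$.

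For such $k$, I plan to enhance the MF data with an auxiliary factor carrying non-trivial odd $K$-theory. Following ideas of Connes-Gromov-Moscovici, Hanke-Schick, and Kubota~\cite{Kubota2}, couple the sequence $\{\rho_n\}$ with a ``loop'' (tensoring with $C(\mathbb{T})$ or a suitable Clifford algebra) to obtain an asymptotically multiplicative $\sigma_n:\Gamma\to U(B_n)$ with $B_n=M_{k_n}(\mathbb{C})\otimes C(\mathbb{T})$. The $\gamma$-element, via Kasparov's Dirac-dual-Dirac product, provides a rational splitting of the Baum-Connes assembly, and it identifies the odd Chern-Connes character pairing of the class $[\bar\sigma]\in K_1(\prod_n B_n/\bigoplus_n B_n)$ against a cyclic cocycle representing $[\alpha]$ with a non-zero rational multiple of $[\alpha]$.

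Next, $C^*$-stability applied to $\{\sigma_n\}$ supplies honest representations $\tau_n:\Gamma\to U(B_n)$ with $\|\sigma_n(s)-\tau_n(s)\|\to 0$. These $\tau_n$ assemble into a $*$-homomorphism $C^*(\Gamma)\to\prod_n B_n$ lifting $\bar\sigma$, so $[\bar\sigma]$ lies in the image of $K_1(\prod_n B_n)$. By the Baird-Ramras~\cite{Ramras} analysis of representation varieties, this image is controlled by honest homotopy-theoretic data of $\mathrm{Hom}(\Gamma,U)$ that cannot detect cohomology of $\Gamma$ in degrees greater than $1$ beyond what comes from the $C(\mathbb{T})$-factor, so the pairing against $[\alpha]\in H^k(\Gamma,\mathbb{Q})$ with $k>1$ must vanish, contradicting the non-vanishing obtained in the previous paragraph.

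The main obstacle is the construction in the second paragraph: producing an asymptotic morphism whose $K_1$-class pairs non-trivially with a prescribed odd cohomology class via the $\gamma$-element. This requires an odd $KK$-theoretic analog of the almost-flat bundle framework of Connes-Gromov-Moscovici, together with careful control of how a cocycle representing $[\alpha]$ extends to a densely-defined cyclic cocycle on the image subalgebra of $\mathcal{Q}$. Once this detection is in place, applying $C^*$-stability with $B_n=M_{k_n}(\mathbb{C})\otimes C(\mathbb{T})$ yields the factorization through honest representations and the contradiction.
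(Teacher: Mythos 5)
Your overall strategy---reduce the even case to the known matricial-stability obstruction, then access odd cohomology by tensoring with $C(\TT)$ and invoking Baird--Ramras flatness---is the right one and matches the paper's architecture. But the proof has a genuine gap exactly where you flag it: the second paragraph's construction of an asymptotically multiplicative $\sigma_n:\G\to U(M_{k_n}(\C)\otimes C(\TT))$ whose class ``pairs non-trivially'' with a prescribed $[\alpha]\in H^k(\G,\Q)$ via a cyclic cocycle is asserted, not proved, and the proposed route is problematic. Extending a group cocycle representing $[\alpha]$ to a densely defined cyclic cocycle on (a subalgebra of) the corona algebra with the required continuity is precisely the kind of difficulty that obstructed the cyclic-cohomology approach to the Novikov conjecture; nothing in the $\gamma$-element hypothesis hands you such an extension. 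Moreover you frame the argument in $K_1$ of $\prod_n B_n/\bigoplus_n B_n$, for which no almost-flat-bundle machinery is set up, and the appeal to Baird--Ramras in your third paragraph is a hand-wave: their theorem is a statement about rational vanishing of Chern classes $c_{d+m}$, $m>0$, of bundles on $Y\times X$ induced by families of representations $\G\to GL_r(C(X))$ with $H^{>d}(X;\Q)=0$, not a general statement about what $\mathrm{Hom}(\G,U)$ can detect.

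The paper closes this gap by running the argument in the opposite direction, so that no individual class ever needs to be ``detected.'' One takes $B=C(\TT)\otimes\QQ$ and uses the $\gamma$-element (via Kasparov's Theorem~\ref{thm:Kas1}, Corollary~\ref{cor:KY}, and Kubota's intermediate quasidiagonal algebra as in Theorem~\ref{cor:qd-embed-kk}) to show the dual assembly map $\nu:KK(C^*(\G),B)_{qd}\to RK^0(B\G\times\TT;\Q)$ is \emph{surjective}. Proposition~\ref{prop:prop} shows that local $\{U_n(C(\TT))\}$-stability forces every class in the image of $\nu_Y$ to be represented by flat bundles; Theorem~\ref{Baird-Ramras} with $X=\TT$, $d=1$ then kills all rational Chern classes $c_m$, $m\geq 2$, so $ch(z)=\exp(c_1(z))$ for every $z\in RK^0(B\G\times\TT;\Q)$ by surjectivity. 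Finally the K\"unneth isomorphism $H^{2k}(B\G\times\TT;\Q)\cong H^{2k}(B\G;\Q)\oplus H^{2k-1}(B\G;\Q)$ (Lemma~\ref{lem:prod}) converts the resulting generation of $H^{even}(B\G\times\TT;\Q)$ by degrees $0$ and $2$ into the vanishing of $H^k(\G,\Q)$ for all $k>1$. To complete your proof you would need to either carry out this surjectivity-plus-K\"unneth argument or actually construct the detecting class, and the latter is substantially harder than the former.
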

Let us recall that
 a group $\G$ is MF if it is isomorphic to a subgroup of the unitary group of the corona $C^*$-algebra
  $\prod_n M_{n}/\bigoplus_n M_{n},$ \cite{CDE2013}.  Equivalently, $\G$ embeds in $\mathbf{U}/\mathbf{N}$ where $\mathbf{U}=\prod_{n=1}^{\infty} U(n)$ and
  $\mathbf{N}=\{(u_n)_n \in \mathbf{U}\,:\, \|u_n-1_n\|\to 0\}$.
In other words a group is MF if it admits sufficiently many approximate unitary representations to  effectively separate its elements. In the terminology of \cite{DECHIFFRE} these are the $\big(U(n), \|\cdot\|\big)_{n=1}^{\infty}$-approximated groups.
It is an open problem to find examples of discrete countable groups which are not MF. The groups that are locally embeddable in amenable groups  are MF  as a consequence of \cite{TWW}.

The class of groups that admit a $\gamma$-element is  large \cite{Kasparov-Skandalis-kk}, \cite{Kasparov-Skandalis-Annals}. It includes the groups that admits a uniform embedding in a Hilbert space \cite{Tu:gamma}.
The amenable groups, or more generally, the groups with Haagerup's property are uniformly embeddable in a Hilbert  space \cite{Valette-book}  and so are the linear groups \cite{GuenHW}.
Hilbert space uniform embeddability
passes to  subgroups  and  products,  direct  limits,  free  products with amalgam, and extensions by exact groups \cite{Dad-Guen}.
\vskip 4pt

 Theorem~\ref{thm-cstable} will be established as a consequence of a result which assumes  weaker forms of stability, see Theorem~\ref{thm:2nd} and Theorem~\ref{cor2nd}. More precisely, Theorem~\ref{thm-cstable} follows from Theorem~\ref{cor2nd}(ii), where only stability with respect to $C^*$-algebras of the form $M_n(C(\TT))$ is assumed. The proof proceeds by an adaptation of the arguments from \cite{CCC} (and we repeat many of them here for the sake of completion) with the novelty that one employs a theorem of Baird and Ramras \cite{Ramras} on vanishing of Chern classes for families of flat bundles in place of a result of Milnor \cite{Milnor}.

 Moreover, we show in Theorem~\ref{thm:2nd} that  if a quasidiagonal groups $\G$ admits a $\gamma$-element  and has a nonvanishing  Betti number $b_{2k}(\G),$ then there are sequences $\{\rho_n\}$ as in Definition~\ref{def:main}(a) which cannot be locally approximated  even if we allow for non-unitary representations $\pi_n : \G \to GL_n(\C)$. Similarly, for  a finitely generated  group $\G$ as  in Theorem~\ref{thm-cstable}, there is a sequence of unital maps $\{\rho_n:\G \to U_n(C(\TT))\}$ as in Definition~\ref{def: C*stable} which cannot be approximated locally by a sequence of homomorphisms $\{\pi_n:\G \to GL_n(C(\TT))\}$.

For the sake of accessibility, we include in our exposition several facts well-known to the experts. In Sections~\ref{sect:2} and \ref{sect:3} we revisit the topics of  flat bundles and almost flat bundles. The proof of Theorem~\ref{thm:main-result} which relies on the approximate monodromy correspondence for almost flat bundles is given in  Section~\ref{sect:4}.  The proof of Theorem~\ref{thm-cstable} is given in Section~\ref{sec:semi}.
\section{Flat bundles}\label{sect:2}
Let $A$ be a unital $C^*$-algebra and let $V$ be a finitely generated (projective) right Hilbert $A$-module.  Let $\mathcal{L}(V)$ be the  $C^*$-algebra of adjointable $A$-linear operators acting on $V$.
The unitary group of $\mathcal{L}(V)$ will be denoted by $U(V)$.
For a compact Hausdorff space $M$ we denote by $\Bun_A^V(M)$ the set of isomorphism classes of locally trivial bundles with fiber $V$ and structure group $U(V)$. If $V=A$ we write $\Bun_A(M)$ for $\Bun_A^A(M).$
 If $E\in \Bun_A^V(M)$ we say that $E$ is a Hilbert $A$-module bundle with typical fiber $V$.
If $M$ is a smooth manifold, then every $E\in \Bun_A^V(M)$  admits a smooth structure which unique up to isomorphism \cite[Thm.3.14]{Schick:ny}. The $C^\infty(M,A)$-module of smooth sections of $E$ is denoted by $C^\infty(E)$.
If $A=\C$ and $V=\C^r$, we write $\Bun_\C^r(M)$ for $\Bun_\C^{\C^r}(M)$,  the set of isomorphism classes of hermitian complex vector bundles of rank $r$.

\begin{definition}
 A flat structure on a smooth bundle $E\in \Bun_A^V(M)$ is given by a finite cover $\mathcal{U}=(U_i)_{i\in I}$ of $M$ together with smooth trivializations
$U_i\times V \to E_{U_i}$  such that  the corresponding transitions functions $v_{ij}:U_i\cap U_j \to U(V)$ are constant functions.
\end{definition}
The classic theory of connections  on vector bundles extends to smooth Hilbert $A$-module bundles as discussed in \cite[Sec.3]{Schick:ny}.
Let $E$ be smooth Hilbert $A$-module bundle over   a  Riemannian manifold $M$. A  connection on $E$ is a $\C$-linear map
 $$\nabla:C^\infty(TM\otimes\C)\otimes C^\infty(E)\to  C^\infty(E),\quad X \otimes s \mapsto \nabla_X(s)$$    which satisfies the conditions
  \begin{itemize}
  \item[(i)] $\nabla_X(s\cdot \mathbf{f})=s\cdot(X\mathbf{f}) +\nabla_X(s)\cdot \mathbf{f}, \quad (\text{Leibnitz formula})$
  \item[(ii)] $\nabla_{fX}(s)=f\nabla_X(s)$
  \end{itemize}
  for every $X\in C^\infty(TM\otimes\C)$, $f\in C^\infty(M,\C)$, $\mathbf{f}\in C^\infty(M,A)$ and $s\in C^\infty(E)$.
    The connection $\nabla$ is compatible with the metric of $E$ if
$$ X\langle s,s'\rangle=\langle \nabla_Xs,s'\rangle+\langle s,\nabla_Xs'\rangle.$$
The importance of having a connection is that allows one to lift smooth paths $\gamma(t)$ between points $p,q\in M$ to  isomorphisms between  fibers $E_p$ and $E_q$ via parallel transport $P_\gamma:E_p \to E_q$.
Recall that a section $s$ along $\gamma$  is parallel if it satisfies the differential equation $\nabla_{{\partial_t\gamma}}s=0$.
This equation has a unique solution for each initial value
 $s(p)\in E_p$ and thus determines uniquely the value of $s(q)\in E_q$ so that one defines $P_\gamma(s(p))=s(q)$.
Compatibility of $\nabla$ with the metric implies that $P_\gamma$ is a unitary operator, this is why such a connection
 $\nabla$ is also called a unitary connection.

The curvature of $\nabla$ is the tensor $R^{\nabla}\in \Omega^2(M, \End{E})$ defined by the equation $$R^{\nabla}(X,Y)=\nabla_X\nabla_Y-\nabla_Y\nabla_X -\nabla_{[X,Y]}.$$

\begin{definition} A couple $(E,\nabla)$ consisting of a bundle $E\in \Bun_A^V(M)$ and a unitary connection is flat if
 the curvature of the unitary connection vanishes, $R^{\nabla}=0$.
\end{definition}
For our discussion of flat bundles, it is convenient to adopt a setup from \cite[Sec.3]{Hanke}.
Let $\mathcal{P}_1(M)$ be the path groupoid of $M$ with objects points of $M$ and morphisms $\mathcal{P}_1(M)(p,q)$ the piecewise smooth paths $[0,1]\to M$ connecting $p$ to $q$.
The product $\gamma\cdot \gamma'$ of two paths is  the path obtained by first traversing $\gamma'$ and then $\gamma$, thus it is given by the concatenation $\gamma'*\gamma$.
 One endows
$\mathcal{P}_1(M)$ with its natural topology. As it is usual, we let $\Omega_1(M,p)$ stand for $\mathcal{P}_1(M)(p,p).$
Denote by ${\Pi}_1(M)$  the fundamental groupoid of $M$ obtained from $\mathcal{P}_1(M)$  by taking homotopy classes of paths with fixed endpoints and let $\G=\pi_1(M,p)$ be the fundamental group of $M$.

For a bundle $E\in\Bun^V_A(M)$, we denote  by $\mathcal{T}(E)$  the transport groupoid of $E$ with objects the points of $M$ and  morphisms
$\mathcal{T}(E)(p,q)=\mathrm{Isom}_A(E_p,E_q)$. Following \cite[p.288]{Hanke}, we endow the groupoid $\mathcal{T}(E)$
with its natural topology, where the set of morphisms is topologized by
using local trivializations in order to identify nearby fibers of $E$ and  $\mathrm{Isom}_A(E_p,E_q)$ is given the uniform norm. A holonomy representation on the bundle $E$ is a continuous morphism of groupoids
\begin{equation}\label{holy}
h:\mathcal{P}_1(M) \to \mathcal{T}(E)
\end{equation}
By a classic result, if $E$ is smooth and it is endowed with a unitary connection $\nabla$,
then the corresponding parallel transport satisfies $P_{\gamma\cdot \gamma'}=P_\gamma \circ P_{\gamma'}$ and hence it defines a holonomy representation $\gamma\mapsto h(\gamma)=P_\gamma$, in the sense discussed above, \cite[Thm.9.8]{smooth}.

\begin{definition}\label{def} Let $\pi:\Gamma=\pi_1(M,p) \to U(V)$ be a group homomorphism. The universal cover of $M$ is denoted by $\widetilde{M}$. One realizes $\widetilde{M}$ as a space of homotopy classes of curves $\eta:[0,1]\to M$ with $\eta(0)=p$ and homotopies preserving the endpoints. The left action of  $G$ on $\widetilde{M}$ is defined as follows. If $s\in \pi_1(M,p)$ is represented by a loop
 $\gamma\in\Omega_1(M,p)$, then $s\cdot [\eta] =[\eta\cdot \gamma^{-1}]$ is represented by  the path given by traversing $\gamma$ in opposite direction followed by traversing $\eta$.
The orbit space of the left action of  $\G$ on $\widetilde{M}\times V$, defined by $s\cdot ([\eta],v)=(s\cdot [\eta],\pi(s)v)$,
  is the total space of  a (flat) Hilbert A-module bundle $\widetilde{M}\times_\pi V \to M$ denoted  by $L_\pi\in \Bun_A^V(M)$.
 The map $$\pi \mapsto L_{\pi}$$  was introduced by  Atiyah \cite{atiyah-map} in the context of finite group representations.

If $A=C^*(\G)$ and $\jmath:\G\hookrightarrow U(C^*(\G))$ is the natural inclusion, then $L_{\jmath}$ is a bundle of free rank-one Hilbert $C^*(\G)$-modules. The bundle $L_{\jmath}\in \Bun_{C^*(\G)}(M)$ is called Mishchenko's flat bundle.
 \end{definition}
 \begin{proposition}[\cite{Kobayashi}]\label{prop:koba} For a smooth  bundle $E\in \Bun^V_A(M)$  the following are equivalent:
\begin{itemize}
 \item[(i)] $E$ admits a  flat structure.
 \item[(ii)] $E$ admits a unitary connection $\nabla$ with zero curvature, $R^\nabla=0$, i.e. $(E,\nabla)$ is flat.
 \item[(iii)] $E$ is defined by a representation $\pi:\pi_1(M)\to U(V)$ in the sense that  $E\cong L_{\pi}$.
 \item[(iv)] There is a holonomy representation $h:\mathcal{P}_1(M) \to \mathcal{T}(E)$ which descends to a morphism of groupoids
 $\mathbf{h}:{\Pi}_1(M) \to \mathcal{T}(E)$
\end{itemize}
\end{proposition}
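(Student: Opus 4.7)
The plan is to establish the cyclic chain $(\mathrm{i})\Rightarrow(\mathrm{ii})\Rightarrow(\mathrm{iv})\Rightarrow(\mathrm{iii})\Rightarrow(\mathrm{i})$, adapting the standard finite-dimensional argument to the Hilbert $A$-module setting via the connection calculus developed in \cite{Schick:ny}. For $(\mathrm{i})\Rightarrow(\mathrm{ii})$, I would start from a flat atlas $\{(U_i,V)\}$ with constant unitary transition cocycle $v_{ij}$; defining $\nabla$ on each chart to be the de Rham differential $d$ acting on $V$-valued smooth functions, the constancy of $v_{ij}$ ensures these local pieces patch into a globally defined unitary connection, and $d^2=0$ yields $R^\nabla=0$. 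The converse $(\mathrm{iii})\Rightarrow(\mathrm{i})$ is similarly structural: $L_\pi$ is built by gluing $U_i\times V$ along the constant unitaries $\pi(s_{ij})$ coming from deck transformations over the overlaps of an evenly covered atlas for the universal cover, so a flat atlas is baked into the construction.

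The principal step is $(\mathrm{ii})\Rightarrow(\mathrm{iv})$. Given a flat unitary connection $\nabla$, parallel transport $P_\gamma$ exists along every piecewise smooth path as the solution of the linear ODE $\nabla_{\partial_t\gamma}s=0$ (read in a local trivialization as a Banach-algebra ODE in $\mathcal{L}(V)$), is unitary by metric-compatibility, and satisfies $P_{\gamma\cdot\gamma'}=P_\gamma\circ P_{\gamma'}$ by uniqueness. Continuity of $\gamma\mapsto P_\gamma$ in the natural topology on $\mathcal{T}(E)$ follows from continuous dependence of the ODE solution on initial data and parameters. The crux is \emph{homotopy invariance}: for a smooth variation $\gamma_s$ with fixed endpoints, I would differentiate $P_{\gamma_s}$ in $s$ and apply the flatness identity
\[R^\nabla(X,Y)=[\nabla_X,\nabla_Y]-\nabla_{[X,Y]}=0\]
to conclude $\partial_s P_{\gamma_s}=0$. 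Consequently $h(\gamma)=P_\gamma$ factors through $\Pi_1(M)$, producing the descended $\mathbf{h}$.

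For $(\mathrm{iv})\Rightarrow(\mathrm{iii})$, fix a basepoint $p$ and a unitary identification $E_p\cong V$; restricting $\mathbf{h}$ to loops at $p$ yields a unitary representation $\pi:\Gamma\to U(V)$. I then define
\[\Phi:\widetilde{M}\times V\to E,\qquad ([\eta],v)\mapsto \mathbf{h}([\eta])(v)\in E_{\eta(1)},\]
where $\eta$ is any representative path from $p$ to $\eta(1)$. Well-definedness follows from the fact that $\mathbf{h}$ is defined on homotopy classes, and the groupoid cocycle identity for $\mathbf{h}$ intertwines the $\Gamma$-action $s\cdot([\eta],v)=(s\cdot[\eta],\pi(s)v)$ with the trivial $\Gamma$-action on $E$, so $\Phi$ descends to a fiberwise unitary bundle isomorphism $L_\pi\cong E$.

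The principal obstacle in this program is not any single implication but rather the careful execution of parallel transport in the Hilbert $A$-module framework: solving the parallel-transport ODE globally in the non-commutative Banach algebra $\mathcal{L}(V)$ and verifying the smooth dependence on parameters needed for the homotopy-invariance computation. These points are classical for finite-dimensional $V$, but in the Hilbert-module setting one must invoke the Banach-space ODE theory underlying the connection formalism of \cite[Sec.3]{Schick:ny}; once this is granted, the remaining implications are formal manipulations with local trivializations and the groupoid cocycle condition.
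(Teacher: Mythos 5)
Your proposal is correct and follows essentially the same route as the paper, which simply cites Kobayashi (Prop.~1.2.5 and 1.4.21) and notes that the classical arguments carry over verbatim to the Hilbert $A$-module setting, with the holonomy representation given by parallel transport and the representation $\pi$ obtained by restricting the descended holonomy to $\pi_1(M,p)$. Your written-out cycle of implications is exactly that classical argument (your homotopy-invariance step is the zero-curvature case of the Buser--Karcher estimate stated as Proposition~\ref{BH}), so there is nothing substantively different to compare.
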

\begin{proof} This is proved in \cite{Kobayashi} (see Prop.1.2.5 and 1.4.21) for complex vector bundles. The same arguments work without change in the present context. The holonomy representation ${h}$ is given by  parallel transport with respect to $\nabla$. Restriction of  $\mathbf{h}$ to $\pi_1(M,p)$ gives a representation $\pi$ as in (iii).
\end{proof}

It is also convenient to  work with selfadjoint idempotents $e$ in  matrices $m\times m$ over the $C^*$-algebra $C(M)\otimes A$
that represent bundles $E\in \Bun_A^V(M)$, where $V\cong e(p)A^m$, $p\in M$.
\begin{notation}\label{not:misi}
 Fix a flat structure for the Mishchenko's bundle  $L_{\jmath}$  given by some finite cover $\mathcal{U}=(U_i)_{i\in I}$ of $M$ together with smooth trivializations
$U_i\times C^*(\G) \to (L_{\jmath})_{U_i}$  such that all the corresponding transitions functions $s_{ij}:U_i\cap U_j \to \G\subset U(C^*(\G))$ are  constant. Thus one obtains group elements $s_{ij}\in \G$
 which form  a 1-cocycle:  $s_{ij}^{-1}=s_{ji}$ and $s_{ij} \cdot s_{jk}=s_{ik}$ whenever $U_i\cap U_j \cap U_k \neq \emptyset.$
 Let $(\chi_i)_{i\in I}$
be positive smooth functions with $\chi_i$ supported in $U_i$ and such that
$\sum_{i\in I} \chi^2_i=1$. Set $m=|I|$ and let $(e_{ij})$ be the canonical matrix unit of $M_m(\CCC)$. Then $L_{\jmath}\in \Bun_{C^*(\G)}(M)$ is represented by the selfadjoint projection
\begin{equation}\label{eqn:mish1}
 \ell_{\jmath}=\sum_{i,j\in I} e_{ij}\otimes \chi_i\chi_j\otimes s_{ij}\in M_{m}(\CCC)\otimes C(M)\otimes C^*(\G).
 \end{equation}
 Moreover, the bundle $L_{\pi}\in \Bun_A^V(M)$, corresponding to the group homomorphism $\pi:\Gamma \to U(V)$ (see Definition~\ref{def}),  is represented by the selfadjoint projection
\begin{equation}\label{eqn:mish2}
 \ell_{\pi}=\sum_{i,j\in I} e_{ij}\otimes \chi_i\chi_j\otimes \pi(s_{ij})\in M_{m}(\CCC)\otimes C(M)\otimes \mathcal{L}(V).
 \end{equation}

\end{notation}

\section{Almost flat bundles}\label{sect:3}
The goal of this section is to present a proof of Theorem~\ref{thm:aprox-monodromy} on approximate monodromy correspondence.
The corresponding result was noted without proof by Skandalis in a remark on page 313 of\cite{MR1157846}. It generalizes a result from \cite{CGM:flat} according to which $\ep$-flat bundles on simply connected manifolds are trivial if $\ep$ is sufficiently small.
In \cite{BB}, we had used the content of Theorem~\ref{thm:aprox-monodromy} in conjunction with the Mishchenko-Fomenko index theorem in order to extend an index theorem of Connes Gromov and Moscovici from \cite{CGM:flat}.

Fix $F\subset \pi_1(M,p)$ and $m$  as in Notation~\ref{not:misi} and the map $s\mapsto \gamma_s$ as in Notation~\ref{def:cgm}.

\begin{definition}[\cite{CGM:flat}] \label{def:con-flat} A unitary connection $\nabla$ on a bundle $E\in \Bun_A^V(M)$  is called $\ep$-flat, $\ep>0$,  if its norm
\[\|R^{\nabla}\| = \sup_{p\in M}\{\|R_p^{\nabla}(X,Y)\|\colon \|X\wedge Y\|\leq 1,  X,Y\in TM_p\},\]
satisfies $\|R^{\nabla}\|<\ep$.
In this case, the couple $(E,\nabla)$ is called  $\ep$-flat.
\end{definition}
The topological counterpart of $\ep$-flatness is the following.
\begin{definition}\label{def-flat} Let  $\mathcal{U}=(U_i)_{i\in I}$ be an open cover of $M$. A bundle $E\in \Bun_A^V(M)$ is
called $(\mathcal{U},\varepsilon)$-flat if is represented by a cocycle $v_{ij}:U_i\cap U_j \to U(V)$ such that
$\|v_{ij}(p)-v_{ij}(q)\|<\varepsilon$ for
all $p,q\in U_i\cap U_j$ and all $i,j \in I$.
\end{definition}

In the sequel we will occasionally identify a bundle $E$ in $\Bun_A^V(M)$ with the corresponding bundle in $\Bun^{\mathcal{L}(V)}_{\mathcal{L}(V)}(M)$ constructed from the same cocycle $v_{ij}:U_i\cap U_j \to U(V)=U(\mathcal{L}(V))$. $A$ and $\mathcal{L}(V)$ are Morita equivalent as $V$ is a finitely generated Hilbert $A$-module.
In particular, for $A=M_r(\C)$, we can identify and go back-and-forth between rank-one bundles of Hilbert $M_r(\C)$-modules  and rank-$r$ hermitian complex vector bundles  constructed from the same transition functions.

We are going to explain how Atiyah's map $\pi\mapsto L_{\pi}$ can be extended to approximate group representations.
Let $M$, $\G=\pi_1(M,p)$, $(\chi_i)_{i\in I}$ with $|I|=m$ and $F=\{s_{ij}\}$ be as in Notation~\ref{not:misi}.
For $\ep>0$ and $V$ a projective Hilbert $A$-module, we define
\begin{equation*}
\mathrm{Rep}^V_{(F,\ep)}(\G)=
\{\rho:\G\to U(V):\|\rho(st)-\rho(s)\rho(t)\|<\ep, \rho(s^{-1})=\rho(s)^{*}\!, \,s,t\in F,\, \rho(1)=1.\}
\end{equation*}
\begin{definition}\label{atiyah2}
	(a) The map $\mathrm{Rep}^V_{(F,\frac{1}{5m^2})}(\G) \to \Bun_A^V(M)$,
		$$\rho\mapsto L_{\rho}$$
	 is defined as follows. Consider the selfadjoint element $x_\rho= \sum_{i,j\in I} e_{ij}\otimes \chi_i\chi_j\otimes \rho(s_{ij})$ of the $C^*$-algebra $M_{m}(\CCC)\otimes C(M)\otimes \mathcal{L}(V)$.
Since
\[x_\rho^2-x_\rho=\sum_{i,k} \left(\sum_j e_{ik}\otimes \chi_i\chi_k \chi_j^2 \otimes (\rho(s_{ij})\rho(s_{jk})-\rho(s_{ik}))\right)\]
we see that
 $\|x_\rho^2-x_\rho\|<1/5$ and hence
	  the spectrum of $x_\rho$  is contained in $[0,1/3)\cup (2/3,1]$. It follows by functional calculus that
	 \[\ell_{\rho}=\chi_{(\frac{2}{3},1]} (x_\rho)\]
	 is a selfadjoint projection in the same $C^*$-algebra such that $\|x_\rho-\ell_\rho\|<1/3$.
	 The Hilbert $A$-module bundle corresponding to $\ell_\rho$ is denoted $L_\rho$.
	
	(b)  Let us note that if $\varphi : C^*(\G) \to A$ is a unital completely positive map such that
	$\|\varphi(st)-\varphi(s)\varphi(t)\|<\frac{1}{5m^2}$ for all $s,t \in F,$ then the selfadjoint element $x_\varphi=\sum_{i,j\in I} e_{ij}\otimes \chi_i\chi_j\otimes \varphi(s_{ij})$ satisfies  $\|x_\varphi^2-x_\varphi\|<1/5$ so that we can define
	the projection $\ell_\varphi$ and the corresponding bundles $L_\varphi$ as above.
\end{definition}
The following Lemma shows that approximate representations in close proximity yield isomorphic bundles.
\begin{lemma}\label{lemma:close}
	If $\rho,\rho' \in \mathrm{Rep}^V_{(F,\frac{1}{5m^2})}(\G)$ and $ \sup\limits_{s\in F}\|\rho(s)-\rho'(s)\|<\frac{1}{3m^2}$ then $\|\ell_\rho-\ell_{\rho'}\|<1$ and hence $L_{\rho}\cong L_{\rho}.$
\end{lemma}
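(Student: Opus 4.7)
My plan is to reduce the lemma to the standard fact that if two self-adjoint projections $p,q$ in a unital $C^*$-algebra satisfy $\|p-q\|<1$, then they are unitarily equivalent via a unitary close to $1$, which in our setting forces the Hilbert $A$-module bundles they cut out to be isomorphic. Thus the entire content of the lemma is the norm estimate $\|\ell_\rho-\ell_{\rho'}\|<1$.

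For this I would interpose the elements $x_\rho,x_{\rho'}$ from Definition~\ref{atiyah2}. By construction $\|x_\rho-\ell_\rho\|<1/3$ and $\|x_{\rho'}-\ell_{\rho'}\|<1/3$, so the triangle inequality reduces matters to showing $\|x_\rho-x_{\rho'}\|<1/3$. Writing
\[
x_\rho-x_{\rho'}=\sum_{i,j\in I} e_{ij}\otimes \chi_i\chi_j\otimes \bigl(\rho(s_{ij})-\rho'(s_{ij})\bigr)
\]
in $M_m(\C)\otimes C(M)\otimes \mathcal{L}(V)$ and applying the triangle inequality termwise, I would estimate each of the $m^2$ summands by the product $\|e_{ij}\|\cdot\|\chi_i\chi_j\|_\infty\cdot\|\rho(s_{ij})-\rho'(s_{ij})\|$. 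The first factor equals $1$; the second is at most $1$ because the partition-of-unity relation $\sum_i\chi_i^{2}=1$ forces $0\leq \chi_i\leq 1$; and the third is strictly less than $1/(3m^2)$ by hypothesis (using that $s_{ij}\in F$). Summing gives $\|x_\rho-x_{\rho'}\|<m^2\cdot 1\cdot 1\cdot 1/(3m^2)=1/3$.

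Combining the three strict estimates, the triangle inequality yields $\|\ell_\rho-\ell_{\rho'}\|<1/3+1/3+1/3=1$, which is the desired bound. The final step is to translate $\|\ell_\rho-\ell_{\rho'}\|<1$ into an isomorphism $L_\rho\cong L_{\rho'}$: the element $v=\ell_{\rho'}\ell_\rho+(1-\ell_{\rho'})(1-\ell_\rho)$ is invertible because $\|1-v\|\leq \|\ell_\rho-\ell_{\rho'}\|<1$, and the unitary $u=v(v^*v)^{-1/2}$ conjugates $\ell_\rho$ to $\ell_{\rho'}$, identifying the corresponding Hilbert $A$-module bundles.

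I do not anticipate a genuine obstacle: the constants $1/(5m^2)$ in Definition~\ref{atiyah2} and $1/(3m^2)$ in the hypothesis are evidently calibrated so that this chain of three $1/3$-estimates closes up. The only point requiring a moment's care is verifying that the inequality $\sum_{i,j}\|\rho(s_{ij})-\rho'(s_{ij})\|<m^2\cdot 1/(3m^2)$ is indeed strict, which follows because each of the $m^2$ terms is strictly bounded by $1/(3m^2)$.
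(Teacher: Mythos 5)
Your proof is correct and follows essentially the same route as the paper, which is the one-line triangle inequality $\|\ell_\rho-\ell_{\rho'}\|\leq \|\ell_\rho-x_{\rho}\|+\|x_\rho-x_{\rho'}\|+\|x_{\rho'}-\ell_{\rho'}\|<\tfrac{1}{3}+\tfrac{1}{3}+\tfrac{1}{3}=1$; your termwise estimate of the middle summand and the standard unitary-equivalence argument for close projections are exactly the (unstated) details the paper leaves to the reader.
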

\begin{proof}
$\|\ell_\rho-\ell_{\rho'}\|\leq \|\ell_\rho-x_{\rho}\|+\|x_\rho-x_{\rho'}\|+\|x_{\rho'}-\ell_{\rho'}\| <\frac{1}{3}+\frac{1}{3}+\frac{1}{3}=1.$
\end{proof}
\begin{remark}\label{rem5}
The following version of Lemma~\ref{lemma:close} holds. Depending only on the data from Notation~\ref{not:misi}, there is $\ep>0$ such that for $\rho \in \mathrm{Rep}^V_{(F,\frac{1}{5m^2})}(\G)$ and  any homomorphism $\pi:\G \to GL(V)$ satisfying
\[ \sup\limits_{s\in F}\|\rho(s)-\pi(s)\|<\ep,\]
the idempotent $\ell_\pi$ defined by the equation \eqref{eqn:mish2} is sufficiently closed to the selfadjoint projection $\ell_\rho$ so that they are
conjugated by an invertible element.
In particular, they define the same class in $K^0(M)$. The idempotent $\ell_\pi$   is not necessarily selfadjoint since $\pi$ takes values in $GL(V)$ rather than $U(V)$.

\end{remark}
 It is now well understood that the monodromy correspondence described in Proposition~\ref{prop:koba} extends to an approximate monodromy correspondence for almost flat bundles. This idea which was introduced in \cite{CGM:flat} and was explored in detail in
\cite{Carrion-Dadarlat}, \cite{Hunger} and \cite{Kubota3}, it is central for our paper.
In the sequel we will use a  version of the approximate monodromy correspondence in a smooth setting.

We will assume that  $\G=\pi_1(M,p)$ does not have elements of order $2$. This restriction is not really necessary, but we make it in order to streamline some of the arguments.
In any case, our main application involves only torsion free groups.
\begin{notation}\label{def:cgm}
	For each $s\in \G=\pi_1(M,p)$ choose a piecewise smooth loop $\gamma_s$ that represents $s$ with the provision that $\gamma_{s^{-1}}(\tau)=\gamma_{s}(1-\tau)$, $\tau\in [0,1]$.
This defines a map $\gamma:\G \to \Omega_1(M,p).$
With $\gamma$ fixed as above, for each a smooth hermitian vector bundle $E$ on $M$ endowed with
	 a unitary connection $\nabla$, we consider the map
	 $$\rho=\rho_{(E,\nabla)}:\G=\pi_1(M,p)\to U(E_p)$$
 $\rho(s)=h(\gamma_s)=P_{\gamma_s},$ $s\in \G$, where $P=P^{\nabla}$ is the parallel transport
	 in $E$  defined by $\nabla$.
\end{notation}

Consider two smooth curves $f_0,f_1:[0,1]\to M$ from $p$ to $q$ and let $(f_t)$, $0\leq t \leq 1,$ be a smooth homotopy between $f_0$ and $f_1$ with fixed endpoints.
Let $P_{f_i}:E_p\to E_q$ be the parallel transport along $f_i$.
Assume for the area of the homotopy $(f_t)$ that:
\[\iint {\|}{\partial_t}f_t(s)\wedge{\partial _s} f_t(s){\|}\,ds\,dt\leq C.\]

\begin{proposition}[Buser-Harcher]\label{BH}
$$\|P_{f_0}-P_{f_1}\|\leq \iint {\|}{R^\nabla(\partial_t}f_t(s),{\partial_s} f_t(s)){\|}\,ds\,dt\leq \|R^{\nabla}\|\iint {\|}{\partial_t}f_t(s)\wedge{\partial _s} f_t(s){\|}\,ds\,dt\leq \|R^{\nabla}\|C.$$
\end{proposition}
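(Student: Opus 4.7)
The plan is to pull everything back to the parameter square $[0,1]^2$ via $H(t,s)=f_t(s)$, exploit the vanishing of $[\partial_t,\partial_s]$, and express the difference $P_{f_1}-P_{f_0}$ as a $t$-integral whose integrand is controlled by the curvature. Let $\widetilde E=H^*E$ carry the pulled-back unitary connection $\widetilde\nabla=H^*\nabla$, whose curvature at $(t,s)$ evaluated on $\partial_t\wedge\partial_s$ is precisely $R^\nabla(\partial_tf_t(s),\partial_sf_t(s))$.

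Fix $v\in E_p$ and define $V\colon[0,1]^2\to\widetilde E$ by requiring $V(t,0)=v$ and $\widetilde\nabla_{\partial_s}V=0$, so that $V(t,s)=P_{f_t|_{[0,s]}}v$. The fibres over the horizontal edges $s=0$ and $s=1$ are the constant spaces $E_p$ and $E_q$, so covariant $t$-derivatives along these edges reduce to ordinary ones, giving $\widetilde\nabla_{\partial_t}V(t,0)=0$ and $\widetilde\nabla_{\partial_t}V(t,1)=\tfrac{d}{dt}(P_{f_t}v)$. The standard curvature identity, combined with $\widetilde\nabla_{\partial_s}V=0$ and $[\partial_t,\partial_s]=0$, produces the linear ODE
\[
\widetilde\nabla_{\partial_s}\bigl(\widetilde\nabla_{\partial_t}V\bigr)=-R^\nabla(\partial_tf_t,\partial_sf_t)\,V
\]
along every horizontal slice. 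Transporting $\widetilde\nabla_{\partial_t}V(t,\cdot)$ back along the slice to the fixed fibre $E_p$ and integrating in $s$ from $0$ to $1$, then re-applying $P_{f_t}$, yields the pointwise identity
\[
\tfrac{d}{dt}(P_{f_t}v)=-\int_0^1 P_{f_t|_{[s,1]}}\,R^\nabla\bigl(\partial_tf_t(s),\partial_sf_t(s)\bigr)\,P_{f_t|_{[0,s]}}(v)\,ds.
\]

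Since parallel transports are unitary, taking operator norms in the above display gives $\|\tfrac{d}{dt}P_{f_t}\|\le\int_0^1\|R^\nabla(\partial_tf_t(s),\partial_sf_t(s))\|\,ds$, and writing $P_{f_1}-P_{f_0}=\int_0^1\tfrac{d}{dt}P_{f_t}\,dt$ together with Fubini produces the first inequality of the statement. The second inequality is an immediate consequence of the very definition of $\|R^\nabla\|$ recorded in Definition~\ref{def:con-flat}, namely $\|R^\nabla(X,Y)\|\le\|R^\nabla\|\cdot\|X\wedge Y\|$, and the third inequality is just the area hypothesis on the homotopy. The main (and essentially only) technical point is the careful bookkeeping of the two parallel transports $P_{f_t|_{[0,s]}}$ and $P_{f_t|_{[s,1]}}$ that conjugate the curvature term, together with the verification that the boundary contribution at $s=0$ vanishes; both become transparent once the geometry is moved onto the square $[0,1]^2$.
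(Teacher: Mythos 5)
Your argument is correct: the pullback to the square, the identity $\widetilde\nabla_{\partial_s}(\widetilde\nabla_{\partial_t}V)=-R^\nabla(\partial_t f_t,\partial_s f_t)V$ coming from $\widetilde\nabla_{\partial_s}V=0$ and $[\partial_t,\partial_s]=0$, the vanishing boundary term at $s=0$, and the Duhamel/unitarity estimate are exactly the standard proof. The paper itself does not argue the proposition at all --- it only cites Buser--Karcher (6.2.1) and Hunger (Prop.~2.7) --- and your write-up is essentially the argument found in those references, so there is nothing to criticize.
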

\begin{proof} This is proved in \cite[6.2.1]{Buser-Karcher}, see also \cite[Prop.2.7]{Hunger}.
\end{proof}
It is routine to extend Proposition~\ref{BH} to
piecewise smooth curves.
\begin{lemma}[\cite{CGM:flat}]\label{lemma:cgm}
	For $M$ a compact connected Riemannian manifold and $F$ a finite subset of $\G=\pi_1(M,p)$, there is a constant $C$ that depends only on $M$ and $F$ such that for any couple $(E,\nabla),$	\[\|\rho(st)-\rho(s)\rho(t)\|\leq C \|R^\nabla\|, \quad \text{for all}\quad s,t \in F.\]
\end{lemma}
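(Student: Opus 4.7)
The plan is to exploit the functoriality of parallel transport under concatenation of paths together with the Buser--Karcher inequality (Proposition~\ref{BH}), which bounds the defect of parallel transports along homotopic curves in terms of the curvature.

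First I would unpack both sides of the inequality in terms of parallel transport. By the definition of $\rho$ in Notation~\ref{def:cgm}, one has $\rho(st)=P_{\gamma_{st}}$ while $\rho(s)\rho(t)=P_{\gamma_s}\circ P_{\gamma_t}$. Since parallel transport is functorial with respect to concatenation (using the convention adopted in this paper that $\gamma\cdot\gamma'$ first traverses $\gamma'$ and then $\gamma$), we have $P_{\gamma_s}\circ P_{\gamma_t}=P_{\gamma_s\cdot\gamma_t}$. Hence the problem reduces to comparing $P_{\gamma_{st}}$ and $P_{\gamma_s\cdot\gamma_t}$, where both $\gamma_{st}$ and $\gamma_s\cdot\gamma_t$ are piecewise smooth loops at $p$ representing the same element $st\in\pi_1(M,p)$.

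Next, for each ordered pair $(s,t)\in F\times F$, I would fix once and for all a piecewise smooth homotopy $H_{s,t}:[0,1]\times[0,1]\to M$ with fixed endpoints joining $\gamma_{s}\cdot\gamma_t$ at time $0$ to $\gamma_{st}$ at time $1$; such a homotopy exists because the two loops are homotopic in $M$. Let
\[
C_{s,t}=\iint\bigl\|\partial_t H_{s,t}(u,v)\wedge\partial_u H_{s,t}(u,v)\bigr\|\,du\,dv
\]
denote the area swept by $H_{s,t}$. Applying the piecewise smooth version of Proposition~\ref{BH} to this homotopy then gives
\[
\|P_{\gamma_{st}}-P_{\gamma_s\cdot\gamma_t}\|\leq C_{s,t}\,\|R^\nabla\|.
\]
Setting $C=\max_{s,t\in F}C_{s,t}$, which is a finite number depending only on $M$ and on the choices of loops $\gamma_s$ for $s\in F\cup F\cdot F$ (hence only on $M$ and $F$ since the map $s\mapsto\gamma_s$ is fixed), one obtains the desired bound uniformly in the couple $(E,\nabla)$.

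The only mild technical point is that the concatenation $\gamma_s\cdot\gamma_t$ and the homotopies $H_{s,t}$ are only piecewise smooth, not smooth; but Proposition~\ref{BH} extends routinely to piecewise smooth data by subdividing the squares $[0,1]^2$ along the finitely many non-smooth seams and summing the resulting estimates. This is the step that could generate some bookkeeping, but it is genuinely routine and does not affect the final constant. The crucial conceptual input is that $C$ is \emph{independent} of $(E,\nabla)$: once the loops $\gamma_s$ and the homotopies $H_{s,t}$ are selected, their swept areas are fixed numerical invariants of $M$ and $F$, and the curvature enters only through the factor $\|R^\nabla\|$ supplied by Buser--Karcher.
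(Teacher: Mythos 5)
Your proposal is correct and follows essentially the same route as the paper: reduce $\rho(s)\rho(t)=P_{\gamma_s}\circ P_{\gamma_t}$ to the parallel transport along the concatenation $\gamma_t*\gamma_s$, fix a homotopy to $\gamma_{st}$ for each of the finitely many pairs $(s,t)\in F\times F$, and apply the (piecewise smooth) Buser--Karcher estimate of Proposition~\ref{BH} with $C$ the maximum of the areas of these homotopies. The paper's proof is just a terser version of exactly this argument.
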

\begin{proof}
For $s,t \in F,$ fix a homotopy between $\gamma_t*\gamma_s$ and $\gamma_{st}.$ Since $F$ is finite, there is a constant $C$ larger than the areas of all these homotopies. It follows then by  Proposition~\ref{BH} that $\|P_{\gamma_t*\gamma_s}-P_{\gamma_{st}}\|\leq C\|R^{\nabla}\|,$
for all $s,t\in F$. Since $\rho(s)\rho(t)=P_{\gamma_s}\circ P_{\gamma_t}=P_{\gamma_t*\gamma_s}$, this completes the proof.
\end{proof}
The following Proposition collects several facts from papers of Hanke and Schick~\cite{Hanke-Schick} and Hanke \cite{Hanke}.
\begin{proposition}[\cite{Hanke}]\label{prop:Hanke}
	Let $M$ be  a compact connected Riemannian manifold and let $\ep>0$. Let $(E_n,\nabla_n)$ be a sequence with $E_n\in \Bun^{r_n}_{\C}(M)$ and $\|R^{\nabla_n}||\leq \ep$ for all $n$.  Let $A_n=M_{r_n}(\C)$, and
	set  $A=\prod_n A_n$  and $B=\prod_n A_n/\bigoplus_n A_n$. Then
	\begin{itemize}
\item[(i)]
There is $E_A\in \Bun_A(M)$  with transition functions in diagonal form and such that the $n^{th}-$component of $E_A$ is isomorphic to $E_n$ as  Hilbert $A_n$-modules bundles.
\item[(ii)] The holonomy representations $h_n:\mathcal{P}_1(M)\to\mathcal{T}(E_n)$ defined via parallel transport  for  $(E_n,\nabla_n)$ assemble to a holonomy representation $h_A: \mathcal{P}_1(M)\to\mathcal{T}(E_A)$.
	\item[(iii)] 	
	If $\,\lim_n \|R^{\nabla_n}\|=0$,
	then the composition
	$\label{help}\xymatrix{
h_B:\mathcal{P}_1(M)\ar[r]^-{h_A}& \mathcal{T}(E_A)\ar[r]&\mathcal{T}(E_A\otimes_A B)
}	$
descends to a morphism of groupoids $\Pi_1(M)\to \mathcal{T}(E_A\otimes_A B)$ and hence induces a group homomorphism $\rho_B:\pi_1(M,p)\to U(B)$.  The Hilbert $B$-module bundle $E_A \otimes_{A} B$ is isomorphic to $L_{\rho_B}$.
 \end{itemize}	
\end{proposition}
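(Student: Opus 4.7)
The plan is to establish the three parts in sequence, with all parts ultimately reducing to the Buser-Karcher estimate (Proposition~\ref{BH}) applied with a uniform-in-$n$ bound on the curvatures.

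For (i), fix a finite open cover $\mathcal{U}=(U_i)_{i\in I}$ of $M$ that trivializes all $E_n$ simultaneously (such a cover exists by compactness of $M$; any good cover refining each trivializing cover works). For each $n$ one obtains a smooth unitary cocycle $u^{(n)}_{ij}:U_i\cap U_j\to U(r_n)\subset U(A_n)$, and assembling coordinate-wise gives a cocycle $v_{ij}:U_i\cap U_j\to \prod_n U(A_n)\subset U(A)$, whose values lie in the diagonal subgroup. This cocycle defines $E_A\in\Bun_A(M)$, and by construction its $n$-th component is $E_n$.

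For (ii), parallel transport in each $(E_n,\nabla_n)$ along a piecewise smooth path $\gamma$ gives an isometry $P^{(n)}_\gamma:(E_n)_p\to (E_n)_q$. Under the natural identification of the fibers of $E_A$ with the product of those of the $E_n$, these assemble to $h_A(\gamma):=(P^{(n)}_\gamma)_n$, a unitary morphism in $\mathrm{Isom}_A((E_A)_p,(E_A)_q)$. The content is the continuity of $h_A$ in the natural topology on $\mathcal{T}(E_A)$: two nearby paths $\gamma_0,\gamma_1$ with common endpoints admit a smooth homotopy of area at most some $C$, and Proposition~\ref{BH} yields
\[\|P^{(n)}_{\gamma_0}-P^{(n)}_{\gamma_1}\|\leq \|R^{\nabla_n}\|\cdot C \leq \ep\, C\]
uniformly in $n$. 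Taking the supremum over $n$ gives $\|h_A(\gamma_0)-h_A(\gamma_1)\|\leq \ep\, C$ in the product $C^*$-norm, which suffices for the required continuity.

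For (iii), when $\|R^{\nabla_n}\|\to 0$, the same estimate applied to two homotopic piecewise smooth paths $\gamma_0,\gamma_1$ with a fixed homotopy of area $C$ shows $\|P^{(n)}_{\gamma_0}-P^{(n)}_{\gamma_1}\|\to 0$ as $n\to\infty$. Thus the sequence $(P^{(n)}_{\gamma_0}-P^{(n)}_{\gamma_1})_n$ lies in $\bigoplus_n A_n$ and vanishes in $B$, so $h_B(\gamma_0)=h_B(\gamma_1)$ in $\mathcal{T}(E_A\otimes_A B)$. Hence $h_B$ factors through $\Pi_1(M)$, yielding $\mathbf{h}_B:\Pi_1(M)\to \mathcal{T}(E_A\otimes_A B)$; its restriction to $\pi_1(M,p)$ together with a trivialization of the fiber at $p$ produces the homomorphism $\rho_B:\pi_1(M,p)\to U(B)$. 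The identification $E_A\otimes_A B\cong L_{\rho_B}$ then follows from the implication (iv)$\Rightarrow$(iii) of Proposition~\ref{prop:koba}, since $E_A\otimes_A B$ now admits the holonomy representation $\mathbf{h}_B$ descending through the fundamental groupoid. The main obstacle, located in (ii), is precisely that a uniform curvature bound is needed to guarantee that the componentwise parallel transports assemble into a morphism continuous with respect to the uniform ($\sup_n$) norm on $A$; without it one would only get continuity coordinate-by-coordinate, which is insufficient for $h_A$ to be a morphism into $\mathcal{T}(E_A)$.
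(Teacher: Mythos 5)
Your overall strategy --- assembling the bundles and holonomies componentwise and controlling everything with the Buser--Karcher estimate applied uniformly in $n$ --- is the same as the paper's, which simply quotes Propositions 3.4, 3.12 and 3.13 of Hanke together with the estimate $\|P_\gamma-\id{E_p}\|\leq C\|R^\nabla\|\cdot\mathrm{length}(\gamma)$ for short loops. Your parts (ii) and (iii) are essentially right. But there is a genuine gap in part (i), and it sits exactly where the hypothesis $\|R^{\nabla_n}\|\leq\ep$ is needed for that part. Choosing a good cover that trivializes every $E_n$ and assembling \emph{arbitrary} trivializations coordinatewise produces transition functions $v_{ij}=(u^{(n)}_{ij})_n$ that are continuous into $\prod_n U(A_n)$ for the product topology, but not, in general, for the norm topology of $U(A)$: for that you need the family $\{u^{(n)}_{ij}\}_n$ to be \emph{equicontinuous} in $n$, and nothing forces this if the trivializations are chosen at random (already for a sequence of trivial line bundles one can pick gauges making the assembled cocycle norm-discontinuous). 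Without norm continuity of the $v_{ij}$, the object $E_A$ is not a locally trivial bundle with structure group $U(A)$, i.e.\ not an element of $\Bun_A(M)$, and the topology on $\mathcal{T}(E_A)$ that you need in (ii) is not even defined. The repair, which is the actual content of Hanke's Proposition 3.4, is to build the trivializations from the connections: take the $U_i$ to be geodesically convex balls and trivialize each $E_n$ over $U_i$ by parallel transport along radial geodesics from the center; Proposition~\ref{BH} then gives a modulus of continuity for $u^{(n)}_{ij}$ depending only on $\ep$ and on the geometry of $M$, hence uniform in $n$.

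A secondary, related point: in (ii) you check continuity of $h_A$ only for nearby paths with \emph{common} endpoints, whereas the topology on $\mathcal{P}_1(M)$ lets the endpoints move, and comparing $\mathrm{Isom}_A((E_A)_p,(E_A)_q)$ with $\mathrm{Isom}_A((E_A)_{p'},(E_A)_{q'})$ goes through the local trivializations of $E_A$; so the same uniform continuity of the synchronous trivializations is what reduces the general case to the fixed-endpoint estimate you wrote. Once (i) is corrected in this way, your arguments for (ii) and (iii), including the identification $E_A\otimes_A B\cong L_{\rho_B}$ via Proposition~\ref{prop:koba}, go through.
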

	\begin{proof} $B$ is viewed as a left $A$ module via the  quotient map $q:A \to B$.
	In the notation of Kasparov, $E_A\otimes_A B=E_A\otimes_q B$.
		The statement of the proposition is the combination of Propositions 3.4, 3.12 and 3.13 from \cite{Hanke}.
		The proof of the these properties  is based on the following fact established in Proposition 3.4  of \cite{Hanke}. There are constants $C,\lambda>0$ depending only on $M$ such that
		for any couple $(E,\nabla)$ one has
$$\|P_\gamma-\id{E_p}\|\leq C \|R^\nabla\|\cdot\mathrm{length}(\gamma)$$ for each closed loop $\gamma\in \Omega_1(M,p)$ with $\mathrm{length}(\gamma)\leq \lambda$. The above estimate or
Lemma~\ref{lemma:cgm} also explain why
		the map $h_B$ descends to $\pi_1(M,p)$, when $\lim_n \|R^{\nabla_n}\|=0$.
	\end{proof}

Fix $F\subset \pi_1(M,p)$ and $m$  as in Notation~\ref{not:misi} and the map $s\mapsto \gamma_s$ as in Notation~\ref{def:cgm}.
\begin{theorem}\label{thm:aprox-monodromy}
	There is $\ep=\ep_M>0$ such that for any smooth hermitian vector bundle $E$ on $M$
	 which admits a unitary connection $\nabla$ with
	 $\|R^\nabla\|<\ep$, the map $\rho(s)=P^\nabla_{\gamma_s},$ $s\in \G=\pi_1(M,p),$ defined by parallel transport
	 is an approximate representation $\rho\in \mathrm{Rep}^r_{(F,\frac{1}{5m^2})}(\G)$, $r=\mathrm{rank}(E)$,
	   with the property that
	$L_{\rho}\cong E$.
\end{theorem}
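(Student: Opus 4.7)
The plan is to split the statement into two tasks: (A) verify that $\rho$ lies in $\mathrm{Rep}^r_{(F,\frac{1}{5m^2})}(\G)$, and (B) exhibit an isomorphism $L_\rho\cong E$. The constant $\ep_M$ will be the smaller of two thresholds arising from these two tasks. Task (A) is essentially Lemma~\ref{lemma:cgm}, which supplies a constant $C=C(M,F)$ with $\|\rho(st)-\rho(s)\rho(t)\|\le C\|R^\nabla\|$ for all $s,t\in F$, so any $\ep<1/(5m^2C)$ works. The normalization $\rho(1)=1$ is arranged by taking $\gamma_1$ constant at $p$, while $\rho(s^{-1})=\rho(s)^*$ follows because parallel transport for a unitary connection is unitary and the choice $\gamma_{s^{-1}}(\tau)=\gamma_s(1-\tau)$ reverses paths.

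For (B), I would first refine $\mathcal{U}=(U_i)_{i\in I}$ so that each $U_i$ is geodesically convex and has diameter below the constant $\lambda$ appearing in Hanke's estimate cited in the proof of Proposition~\ref{prop:Hanke}. Fix base points $p_i\in U_i$ and auxiliary piecewise smooth paths $\alpha_i$ from $p$ to $p_i$, chosen so that the loop $(\alpha_j\cdot\beta_{j,q})\cdot(\alpha_i\cdot\beta_{i,q})^{-1}$—where $\beta_{i,q}$ is the short geodesic in $U_i$ from $p_i$ to $q$—represents exactly the Mishchenko cocycle element $s_{ij}$ of Notation~\ref{not:misi}. Parallel transport along $\alpha_i\cdot\beta_{i,q}$ then gives smooth unitary trivializations $\phi_i:U_i\times E_p\to E_{U_i}$ whose transition functions are $v_{ij}(q)=P^\nabla_{\sigma_{ij}(q)}$ with $\sigma_{ij}(q)\in\Omega_1(M,p)$ lying in the homotopy class of $\gamma_{s_{ij}}$. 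Applying Proposition~\ref{BH} to homotopies of uniformly bounded area between $\gamma_{s_{ij}}$ and $\sigma_{ij}(q)$ (which exist because the $U_i$ are geometrically small) yields
\[
\sup_{q\in U_i\cap U_j}\bigl\|v_{ij}(q)-\rho(s_{ij})\bigr\|\le K\|R^\nabla\|,\qquad i,j\in I,
\]
with $K=K(M,\mathcal{U},\gamma)$ depending only on the fixed geometric data.

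With this in hand, $E$ is represented by the selfadjoint projection $e=\sum_{i,j}e_{ij}\otimes\chi_i\chi_j\otimes v_{ij}\in M_m(\C)\otimes C(M)\otimes\End(E_p)$; the cocycle identity $v_{ij}v_{jl}=v_{il}$ together with $\sum_j\chi_j^2=1$ forces $e^2=e=e^*$. A crude estimate then gives $\|e-x_\rho\|\le mK\|R^\nabla\|$, and combining with $\|x_\rho-\ell_\rho\|<1/3$ from Definition~\ref{atiyah2} forces $\|e-\ell_\rho\|<1$ as soon as $\ep<2/(3mK)$. Since two projections in a $C^*$-algebra at distance less than $1$ are unitarily equivalent, this yields $E\cong L_\rho$. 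Setting $\ep_M$ to be the minimum of $1/(5m^2C)$ and $2/(3mK)$ (plus any threshold needed to preserve the cocycle properties after refining the cover) finishes the plan. The main obstacle is the geometric setup in (B): arranging the trivializations so that the varying transition functions $v_{ij}(q)$ faithfully represent the prescribed Mishchenko classes $s_{ij}$, while simultaneously controlling the areas of the comparison homotopies uniformly in $q$, requires careful bookkeeping on the cover and the auxiliary paths—nothing deep, but it is the delicate step where a loose estimate would downgrade the conclusion from an honest bundle isomorphism to a mere $K$-theoretic equality.
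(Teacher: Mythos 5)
Your proposal is correct in outline, but it takes a genuinely different route from the paper. The paper argues by contradiction: assuming the statement fails for every $\ep$, it extracts a sequence $(E_n,\nabla_n)$ with $\|R^{\nabla_n}\|\to 0$ and $E_n\ncong L_{\rho_n}$, assembles these into a single Hilbert $A$-module bundle $E_A$ over $A=\prod_n M_{r_n}(\C)$ via Proposition~\ref{prop:Hanke}, passes to the corona $B=\prod_n M_{r_n}(\C)/\bigoplus_n M_{r_n}(\C)$ where the holonomy descends to a genuine representation $\rho_B$ with $E_A\otimes_A B\cong L_{\rho_B}\cong L_{\rho_A}\otimes_A B$, and then reads off $L_{\rho_n}\cong E_n$ for large $n$ componentwise (by naturality of the functional calculus defining $\ell_{\rho_A}$), a contradiction. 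That argument is soft and short given Hanke's machinery, but non-effective: it produces no explicit $\ep_M$. Your argument is direct and quantitative --- build parallel-transport trivializations whose transition cocycle $v_{ij}(q)$ realizes the Mishchenko classes $s_{ij}$, estimate $\sup_q\|v_{ij}(q)-\rho(s_{ij})\|\le K\|R^\nabla\|$ via Proposition~\ref{BH}, and conclude $\|e-\ell_\rho\|<1$ --- which in principle yields an explicit threshold. The delicate step you flag is real and is essentially the content of Hunger's comparison between connection-flatness (Definition~\ref{def:con-flat}) and cocycle-flatness (Definition~\ref{def-flat}), which the paper itself invokes later in the proof of Theorem~\ref{thm:main-resultA}; carrying it out requires adjusting the auxiliary paths $\alpha_i$ by a $\Gamma$-valued coboundary to match the prescribed cocycle $(s_{ij})$ exactly, and using compactness of the finitely many sets $\overline{U_i\cap U_j}$ to bound the homotopy areas uniformly in $q$. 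One small point to tidy up: the theorem is stated relative to the cover, the set $F$ and the integer $m$ already fixed in Notation~\ref{not:misi}, so rather than refining $\mathcal{U}$ after the fact (which would change $m$, $F$ and the very definition of $\ell_\rho$), you should either stipulate from the outset that the cover in Notation~\ref{not:misi} is chosen geodesically convex and sufficiently fine, or add a comparison argument (in the spirit of Lemma~\ref{lemma:close}) showing that for small $\ep$ the isomorphism class of $L_\rho$ is unchanged under refinement of the cover.
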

\begin{proof}
By Lemma~\ref{lemma:cgm}, there is $\ep_0$ such that if $\|R^\nabla\|<\ep_0$,
then $\rho\in \mathrm{Rep}^r_{(F,\frac{1}{5m^2})}(\G)$.
Seeking a contradiction, suppose that the statement is false for all  $\ep>0.$ Consequently, if we fix a sequence $(\ep_n)$ convergent to $0$ with $\ep_n<\ep_0,$ then there is a sequence $(E_n,\nabla_n)$ such that $E_n\in \Bun_{\C}^{r_n}(M)$,  $\|R^{\nabla_n}\|<\ep_n$
but $E_n \ncong L_{\rho_n}$ for all $n\in N$.

Let $E_A$, $h_A,$ $h_B$ and $\rho_B$ be as in Proposition~\ref{prop:Hanke} so that in particular  $E_A \otimes_{A} B\cong L_{\rho_B}$. Define $\rho_A: \G \to U((E_A)_p)\cong U(A)$ by $\rho_A(s)=h_A(\gamma_s).$ Thus the components of $\rho_A$ are $(\rho_n)$ and
  $q\circ \rho_A=\rho_B$, where $q:A \to B$ is the quotient map.
Consider the element
$$x_{\rho_A}=
\sum_{i,j\in I} e_{ij}\otimes \chi_i\chi_j\otimes \rho_A(s_{ij})\in M_m(\C)\otimes C(M)\otimes A,$$
and let $\ell_{\rho_A}=\chi_{(2/3,1]}(x_{\rho_A})$ be the projection representing $L_{\rho_A}$ as Definition \ref{atiyah2}.
 Then $(\id{M_m(\C)\otimes C(M)}\otimes q )(\ell_{\rho_A})=\ell_{\rho_B}$ is a projection representing $L_{\rho_B}.$ Using Proposition~\ref{prop:Hanke}(iii), it follows that
 $$L_{\rho_A}\otimes_A B \cong L_{\rho_B}\cong E_A \otimes_{A} B.$$
 The components of $\ell_{\rho_A}$ are $(\ell_{\rho_n})$ by naturality of functional calculus.

 This implies that $L_{\rho_n}\cong (L_{\rho_A})_n\cong (E_A)_n \cong E_n,$ for all sufficiently large $n$,
 and this contradicts our assumption.
\end{proof}

\section{Uniform-to-local non-stable groups}\label{sect:4}

Let $M$ be a closed connected Riemannian manifold with nonpositive sectional curvature,  $K(M)\leq 0$.
Fix a base point $p\in M$ and
consider  the fundamental group $\Gamma=\pi_1(M,p)$. It is a classic result that
$\G$ is torsion free and that each homotopy class $s\in \pi_1(M,p)$ contains a unique constant speed geodesic  $\gamma_s:[0,1]\to M$, see \cite[Thm.4.13, p.200]{Bridson-book}. In particular
 $\gamma_{s^{-1}}$ is  $\gamma_s$ with the opposite parametrization, $\gamma_{s^{-1}}(\tau)=\gamma_s(1-\tau)$.
Thus, for manifolds with nonpositive sectional curvature we have a preferred choice for the map
  $\G=\pi_1(M,p)\to \Omega_1(M,p), $  $s\mapsto \gamma_s,$ considered in Notation~\ref{def:cgm}.
  For an $\ep$-flat couple $(E,\nabla)$ as in Definition~\ref{def:con-flat},
 parallel transport along these geodesics define a map
  $\rho:\Gamma\to U(E_p)$,
  \begin{equation}\label{rho}
   \rho(s)=P_{\gamma_s},\,\, s\in \Gamma.
  \end{equation}
  with $\rho(s^{-1})=\rho(s)^{-1}$ for all $s\in \G$. The map $\rho$ is an approximate unitary representation by Lemma~\ref{lemma:cgm}.
  Morever, as indicated by Gromov
on page 166 of \cite{Gromov-large}, $\rho$ is a uniform approximate representation if $M$ has  strictly negative sectional curvature.
\begin{proposition} [Gromov] \label{Gro}Let $M$ be a closed connected Riemannian manifold with sectional curvature
$K(M)\leq -\kappa <0.$  Then $\|\rho(st)-\rho(s)\rho(t)\|\leq \kappa \pi \|R^{\nabla}\|$ for all $s,t \in \G$.
\end{proposition}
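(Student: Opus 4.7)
The plan is to lift the problem to the universal cover $\widetilde{M}$, which by Cartan--Hadamard is a Hadamard manifold of sectional curvature $\le -\kappa$, and then exploit the fact that geodesic triangles in such a manifold have uniformly bounded area. Fix a lift $\widetilde p$ of $p$; for each $g \in \G$ the lift $\widetilde{\gamma}_g$ starting at $\widetilde p$ is the unique geodesic from $\widetilde p$ to $g\,\widetilde p$. For $s, t \in \G$, the three geodesics $\widetilde{\gamma}_s$, $s\cdot\widetilde{\gamma}_t$ (from $s\,\widetilde p$ to $st\,\widetilde p$), and $\widetilde{\gamma}_{st}$ bound a geodesic triangle $T_{s,t}\subset \widetilde M$ with vertices $\widetilde p,\ s\,\widetilde p,\ st\,\widetilde p$.

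Fill $T_{s,t}$ by a ruled surface $f:[0,1]^2\to\widetilde M$ (for instance the geodesic cone from $\widetilde p$ over the opposite side $\widetilde{\gamma}_{st}$), and project it down to a piecewise smooth homotopy $\bar f:[0,1]^2\to M$ between the loops $\gamma_t * \gamma_s$ and $\gamma_{st}$ based at $p$. Since $\rho(s)\rho(t) = P^{\nabla}_{\gamma_t * \gamma_s}$ and $\rho(st) = P^{\nabla}_{\gamma_{st}}$, the Buser--Karcher estimate (Proposition~\ref{BH}) applied to $\bar f$ yields
\[
\|\rho(st) - \rho(s)\rho(t)\| \;\le\; \|R^{\nabla}\| \cdot \mathrm{Area}(f).
\]

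The main obstacle, and precisely the place where \emph{strict} negativity of curvature is essential, is the uniform bound $\mathrm{Area}(f)\le \pi/\kappa$, independent of $s$ and $t$. In the surface case this is immediate from Gauss--Bonnet: for a geodesic triangle with angles $\alpha, \beta, \gamma$ one has $\int_T K\,dA = \alpha+\beta+\gamma-\pi \ge -\pi$, and $K\le -\kappa$ forces $\mathrm{Area}(T)\le \pi/\kappa$. In higher dimension the same estimate for the geodesic cone is a classical consequence of comparison geometry, via Rauch/CAT($-\kappa$) comparison with the model triangle in the hyperbolic plane of constant curvature $-\kappa$, whose area is $(\pi-\alpha-\beta-\gamma)/\kappa \le \pi/\kappa$. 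For merely nonpositive curvature this uniform bound fails (on a flat torus, geodesic triangles grow unboundedly with the word length of $s, t$), which is why the pointwise estimate of Lemma~\ref{lemma:cgm} does not upgrade to the uniform statement of Proposition~\ref{Gro} in that setting.
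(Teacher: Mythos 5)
Your argument is essentially identical to the paper's: lift to the universal cover, fill the geodesic triangle on $\widetilde p$, $t\cdot\widetilde p$, $(st)\cdot\widetilde p$ with a ruled surface, bound its area via Aleksandrov/CAT$(-\kappa)$ comparison with the model triangle of constant curvature $-\kappa$ together with Gauss--Bonnet, and conclude with the Buser--Karcher estimate of Proposition~\ref{BH}. Note that your constant $\pi/\kappa$ is in fact the correct output of Gauss--Bonnet (since $-\kappa\,\mathrm{area}(T)=\alpha+\beta+\gamma-\pi\ge-\pi$), whereas the paper writes $\kappa\pi$; the two agree only for $\kappa=1$, and the discrepancy is immaterial for the application since only the vanishing of the defect as $\|R^{\nabla}\|\to 0$ is used.
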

\begin{proof}
Consider  the universal cover $\Phi:\widetilde{M} \to M$.  We endowed $\widetilde{M}$ with the pullback of the Riemannian metric from $M$ so that $\Phi$ becomes a local isometry. Fix a base point $\tilde p\in \widetilde{M}$ with $\phi(\tilde p)=p$.
$\G$ acts by isometries on  $\widetilde{M}$:  $\G\times \widetilde{M}\to \widetilde{M}$, $(t,\tilde x) \mapsto t\cdot \tilde x$. For each $t\in G$, let $\tilde{\gamma}_t:[0,1]\to \widetilde{M}$ be the unique geodesic  joining $\tilde p$ with
$t\cdot \tilde p$. Then $\Phi\circ \tilde{\gamma}_t={\gamma}_t$ by the uniqueness result mentioned earlier, see \cite[p.201]{Bridson-book}. Fix $s$ and $t$ in $\G$. Let $\bar{\gamma}_s$ be the unique lift of $s$ to a path in $\widetilde{M}$ with $\bar{\gamma}_s(0)=t\cdot \tilde p$. Then $\gamma_t*\bar{\gamma}_s $ lifts $st$.
Consider the  geodesic triangle in $\widetilde{M}$ with vertices $\tilde{p}$,  $t\cdot\tilde{p}$, and $(st)\cdot\tilde{p}$ and  edges
$\tilde{\gamma}_t$, $\bar\gamma_s$ and $\tilde\gamma_{st}$. Following \cite[p.106]{Buser-Karcher}, we span a ruled surface $S$ into this geodesic triangle by joining  $\tilde{\gamma}_t(\tau)$
to $\tilde{\gamma}_{st}(\tau)$ by the unique minimizing geodesic for each $\tau\in [0,1]$ (geodesics vary continuously and smoothly on their endpoints).
From Gauss' equation the intrinsic curvature of $S$ is $\leq K(M)$ and by A.D. Aleksandrov's
area comparison theorem \cite[Prop.6.7]{Buser-Karcher},  the area of $S$ is less than equal to the area of the model triangle $T$ with the same edge length.  Since   the model space has constant curvature $-\kappa<0$, it follows that $\textrm{area}(T)\leq \kappa \pi$ by Gauss-Bonnet,  and  hence  $\textrm{area}(S)\leq \kappa \pi.$
This shows that there is a homotopy $(\tilde{f}_\tau)$ of area $\leq \kappa \pi$ between the paths and $\tilde{\gamma}_t *(\bar\gamma_s)$ and $\tilde\gamma_{st}$.
Since $\Phi$ is a local isometry, it follows that $f_\tau:=\Phi\circ \tilde{f}_\tau$ is a homotopy of area $\leq \kappa \pi$
between $f_0=\gamma_t*\gamma_s$ and $f_1=\gamma_{st}.$ It follows by Proposition~\ref{BH} that
\[ \|P_{\gamma_s}\circ P_{\gamma_t}-P_{\gamma_{st}}\|=\|P_{\gamma_t*\gamma_s}-P_{\gamma_{st}}\|\leq \|R^{\nabla}\|\kappa \pi .\]
\end{proof}
The dual assembly map of Kasparov \(\nu:K^0(C^*(\G))\to  RK^0(B\G)\) is a generalization of the Atiyah's map $\mathrm{Rep}(\G) \to RK^0(B\G)$, $\pi \mapsto [L_\pi]$.
Kasparov has shown that if  $\G$ has a $\gamma$-element, then $\nu$ is surjective, \cite{Kas:inv}.
For the proof of Theorem~\ref{thm:main-resultA} we only use the surjectivity of $\nu$ in the case when $\G$ is the fundamental group of a closed Riemannian manifold with nonpositive sectional curvature. This was proven by Kasparov  in \cite[Thm.6.7]{Kas:inv} by showing that $\G$ has a $\gamma$-element. By the Hadamard-Cartan theorem the universal cover of $M$ is contractible and hence $M$ is a model for $B\G$.
Moreover, it turns out that for quasidiagonal groups (e.g. residually finite groups), the map $\nu$ remains surjective even if one restricts its domain to quasidiagonal K-homology classes $K^0(C^*(\G))_{qd}$,  \cite{AA}, \cite{Kubota2}, \cite{CCC}. We will elaborate on this point in the proof of Theorem~\ref{thm:main-resultA} below.

\begin{theorem}\label{thm:main-resultA}
  Let $M$ be a closed connected Riemannian manifold with sectional curvature
$K(M)\leq -\kappa <0.$ Assume that  $\Gamma=\pi_1(M)$ is residually finite,  and $b_{2i}(M)\neq 0$ for some $i>0$.  Then there exist a finite subset $F\subset \G$ and $C>0$  with the following property. For any $\ep>0$ there is a  unital map $\rho:\G \to U(n)$, satisfying
$\sup_{s,t\in \G}\|\rho(st)-\rho(s)\rho(t)\|<\ep,$
 such that for any representation $\pi:\G\to GL_n(\C),$\,
 \(\sup_{s\in F} \|\rho(s)-\pi(s)\|>C.\)
\end{theorem}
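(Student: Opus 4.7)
The plan is to exhibit, for each $\epsilon>0$, a finite-rank almost-flat hermitian bundle $(E,\nabla)$ on $M$ whose parallel-transport map $\rho(s)=P^\nabla_{\gamma_s}$ is a \emph{uniform} approximate representation of $\Gamma$ with defect at most $\epsilon$ (a bound forced by Gromov's Proposition~\ref{Gro}, which depends crucially on strict negativity of the curvature), and whose K-theory class pairs nontrivially with some degree-$2i$ cohomology class. The obstruction to approximating $\rho$ by a genuine representation will then come from the fact that every flat vector bundle has trivial rational Chern classes in positive degree.

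First I would fix once and for all the data of Notation~\ref{not:misi}, setting $F=\{s_{ij}\}\subset\Gamma$ and $m=|I|$, and I fix $C>0$ to be the constant provided by Remark~\ref{rem5}: if $\rho\in\mathrm{Rep}^r_{(F,1/(5m^2))}(\Gamma)$ and $\pi:\Gamma\to GL_r(\C)$ is any homomorphism with $\sup_{s\in F}\|\rho(s)-\pi(s)\|<C$, then the idempotents $\ell_\rho$ and $\ell_\pi$ of~\eqref{eqn:mish2} are conjugated by an invertible element, so $[L_\pi]=[L_\rho]$ in $K^0(M)$. Both $F$ and $C$ depend only on the geometry of $M$. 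Next I would pick the target K-theory class. By Hadamard--Cartan, $\widetilde{M}$ is contractible and $M$ is a model for $B\Gamma$, so $H^{2i}(M,\Q)\cong H^{2i}(\Gamma,\Q)$; combining $b_{2i}(M)\neq 0$ with the rational isomorphism $\mathrm{ch}:K^0(M)\otimes\Q\to H^{\mathrm{even}}(M,\Q)$ yields a class $x\in K^0(M)$ with $\mathrm{ch}_{2i}(x)\neq 0$.

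Kasparov~\cite[Thm.6.7]{Kas:inv} provides a $\gamma$-element for $\Gamma$, hence surjectivity of the dual assembly map $\nu:K^0(C^*(\Gamma))\to K^0(M)$. Residual finiteness of $\Gamma$ allows the class $x$ to be realized by the quasidiagonal refinement of $\nu$ recalled in the introduction (following \cite{AA,Kubota2,CCC}): for any $\delta>0$ there is a unital c.p.\ map $\varphi_\delta:C^*(\Gamma)\to M_{r}(\C)$ (with $r=r(\delta)$) satisfying $\|\varphi_\delta(st)-\varphi_\delta(s)\varphi_\delta(t)\|<\delta$ for $s,t\in F$, such that the bundle $L_{\varphi_\delta}\in\Bun^r_\C(M)$ of Definition~\ref{atiyah2}(b) represents $x+r[\mathbf{1}]$ in $K^0(M)$ and carries a smooth unitary connection $\nabla_\delta$, obtained from the Grassmann connection on $\ell_{\varphi_\delta}$ via the partition of unity as in \cite{Hanke-Schick,Hanke}, with $\|R^{\nabla_\delta}\|\leq C'\delta$ for a constant $C'$ depending only on $M$. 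Given $\epsilon>0$, choose $\delta$ so that $\|R^{\nabla_\delta}\|<\min\bigl(\epsilon_M,\,\epsilon/(\kappa\pi)\bigr)$, where $\epsilon_M$ is the constant of Theorem~\ref{thm:aprox-monodromy}. Applying that theorem, with $\gamma_s$ the unique constant-speed geodesic loop representing $s$ (well defined since $K(M)\leq 0$), yields $\rho\in\mathrm{Rep}^r_{(F,1/(5m^2))}(\Gamma)$ with $L_\rho\cong L_{\varphi_\delta}$, and Proposition~\ref{Gro} upgrades the local defect to the uniform bound
\[\sup_{s,t\in\Gamma}\|\rho(st)-\rho(s)\rho(t)\|\leq \kappa\pi\,\|R^{\nabla_\delta}\|<\epsilon.\]

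Finally, suppose for contradiction that some homomorphism $\pi:\Gamma\to GL_r(\C)$ satisfies $\sup_{s\in F}\|\rho(s)-\pi(s)\|<C$. By the choice of $C$ we have $[L_\pi]=[L_\rho]=[L_{\varphi_\delta}]=x+r[\mathbf{1}]$ in $K^0(M)$. But $L_\pi$ is a \emph{flat} bundle: its transition functions $\pi(s_{ij})$ are locally constant, so the trivial connections on the local charts assemble into a flat (in general non-unitary) connection on $L_\pi$; by Chern--Weil theory all rational Chern classes of $L_\pi$ in positive degree vanish, and hence $\mathrm{ch}_{2i}[L_\pi]=0$. This contradicts $\mathrm{ch}_{2i}(x+r[\mathbf{1}])=\mathrm{ch}_{2i}(x)\neq 0$. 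The hardest step is the construction in the third paragraph: producing finite-rank almost-flat hermitian bundles that realize a prescribed K-theory class with controllably small curvature. This is exactly where residual finiteness (for quasidiagonality of the relevant K-homology classes) and the $\gamma$-element hypothesis (for surjectivity of the dual assembly map) are both indispensable; without strict negativity of the sectional curvature, Proposition~\ref{Gro} would fail and we would only obtain local, not uniform, approximate representations.
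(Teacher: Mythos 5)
Your proposal is correct and follows essentially the same route as the paper: the data of Notation~\ref{not:misi} and Remark~\ref{rem5}, the quasidiagonal realization of the surjective dual assembly map (using residual finiteness), the passage to smooth almost-flat bundles, Theorem~\ref{thm:aprox-monodromy} for $L_\rho\cong L_{\varphi_\delta}$, Proposition~\ref{Gro} for the uniform defect bound, and the vanishing of rational Chern classes of flat bundles for the contradiction. The only imprecision is your claim that $[L_{\varphi_\delta}]=x+r[\mathbf{1}]$: the Cuntz-pair construction actually gives $[L_{\varphi_\delta}]=x+[L_{\psi_\delta}]$ where $\psi_\delta$ is a genuine finite-dimensional unitary representation (this is where residual finiteness enters), but since $L_{\psi_\delta}$ is flat its higher rational Chern character vanishes, so $\mathrm{ch}_{2i}([L_{\varphi_\delta}])=\mathrm{ch}_{2i}(x)\neq 0$ and your contradiction goes through unchanged.
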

\begin{proof} Let $F\subset \G$ be the finite set from Notation~\ref{not:misi}.
We construct a sequence of unital maps
 $\{\rho_n:\G \to U(H_n))\}_n$ with $H_n$ finite dimensional Hilbert spaces and
 \[\lim_{n\to \infty} \,\sup_{s,t\in \G}\|\rho(st)-\rho_n(s)\rho_n(t)\|=0,\]
 for which does not  exist a sequence of homomorphisms $\{\pi_n:\G\to GL(H_n)\}_n$
 such that
 \begin{equation}\label{FFF}
 \lim_{n\to \infty} \|\rho_n(s)-\pi_n(s)\|=0, \,\text{for all} \, s\in F.
\end{equation}	

Since $M=B\G$ and $b_{2k}(\G)\neq 0$, one has $H^{2k}(M,\Q)\neq 0$.  Since the Chern character is a rational isomorphism, there is
  a nontorsion element $y\in \tilde{K}^0(M).$
 Since  the dual assembly map $\nu: K^0(C^*(\G))_{qd}\to K^0(B\G)=K^0(M)$ is surjective by  \cite[Thm.4.6]{CCC}, there is $x\in K^0(C^*(\G))_{qd}$ such that $\nu(x)=y$.
 The following realization of $\nu$ on quasidiagonal KK-classes was introduced in  \cite{AA}.
The element  $x$ is represented  by  a pair of unital $*$-representations $\Phi,\Psi:C^*(\G)\to \mathcal{L}(H)$,  such that $\Phi(a)-\Psi(a)\in K(H)$,
 $a\in C^*(\G)$, and with property that
 there is an increasing approximate unit $(p_n)_n$ of $K(H)$ consisting of projections such that $(p_n)_n$ commutes asymptotically with both $\Phi(a)$ and $\Psi(a)$, for all $a\in C^*(\G)$. Thus:
 \begin{equation}\label{qddd}
 	\lim_n \|p_n\Phi(a)-\Phi(a)p_n\|=0, \quad \lim_n \|p_n\Psi(a)-\Psi(a)p_n\|=0, \quad a \in C^*(\G).
 \end{equation}
 Moreover, since the group $\G$ is residually finite, there is a
 residually finite dimensional $C^*$-algebra $D$ which is intermediate between $C^*(\G)$ and $C_r^*(\G)$
 so that $x$ is in the image of the map $K^0(D)\to K^0(C^*(\G))$. This is explained in \cite{CCC} (Prop. 3.8 and Thm.4.6)  where we used Kubota's idea \cite{Kubota2} of considering quasidiagonal $C^*$-algebras which are intermediate between  the full and the reduced group $C^*$-algebras.
 This fact allows us to assume that $\Psi$ is given by a direct sum of finite dimensional unitary representations of $\G,$ so that we may arrange that each $p_n$ commutes with $\Psi$.

 If $r_n=\mathrm{rank}(p_n)$,  set
  $A_n=M_{r_n}(\C)$,
	 $A=\prod_n A_n$,  $B=\prod_n A_n/\bigoplus_n A_n$, and let $q:A \to B$ be the quotient map.
Define unital  maps $\varphi,\psi: C^*(\G) \to A$ by
 \[\varphi(a)=(\varphi_n(a))_n \quad\text{and} \quad \psi(a)=(\psi_n(a))_n ,\]
 where $\varphi_n(a)=p_n \Phi(a) p_n$ and $\psi_n(a)=p_n \Psi(a) p_n.$ Then $\varphi$ is a unital completely positive map with asymptotically  multiplicative components and $\psi$ is a unital $*$-homomorphism.
%
%
 Let $L_\varphi$ and $L_\psi$ be the corresponding Hilbert $A$-module bundles, with components $L_{\varphi_{n}}$ and $L_{\psi_n}$, see Definition~\ref{atiyah2}.
 As we noted in \cite{AA} based on work of Phillips and Stone \cite[3.18]{MR832541}, \cite[Thm.1]{MR1124246},  the bundles  $L_{\varphi_{n}}$  are $(\mathcal{U},\ep'_n)$-flat
  with  $\lim \ep'_n =0$ and each $L_{\psi_n}$  is flat since $\psi_n$ is a true representation. Moreover, by \cite{AA}:
  \[y=\nu(x)=[L_{\varphi_{n}}]-[L_{\psi_n}],\quad  \text{for all}\,\, n \geq 1.\]
  The definition of  almost flat bundles based on smooth connections as defined in \cite{CGM:flat}, see Definition~\ref{def:con-flat}, is equivalent with the  definition based on flatness of transition functions, see Definition~\ref{def-flat}, as verified by Hunger \cite{Hunger}. It follows that one can replace the bundles $L_{\varphi_{n}}$ by smooth bundles $E_n$ isomorphic to $L_{\varphi_{n}}$ and which are
  endowed with metric compatible connections $(E_n,\nabla_n)$  such that $\|R^{\nabla_n}\|\leq \ep_n$ where $\ep_n \leq C' \ep_n'$  and hence  $\lim \ep_n =0$. The constant $C'$  depends only on $M$ and the fixed cover $\mathcal{U}$.
  Let $h_n:\mathcal{P}_1(M)\to\mathcal{T}(E_n)$ be the holonomy representations  defined via parallel transport  corresponding to $(E_n,\nabla_n)$ and let $\rho_n(s)=h_n(s)=P^{\nabla_n}_{\gamma_s}$, $s\in \G$.
  It follows from Theorem~\ref{thm:aprox-monodromy} that
  \[ L_{\rho_n}\cong E_n \cong L_{\varphi_{n}} \]
  for all sufficiently large $n$. On the other hand, since we work with geodesic loops $\gamma_s$ representing the elements of $\Gamma$,
it follows by Proposition~\ref{Gro}  that $$\sup_{s,t\in \G}\|\rho_n(st)-\rho_n(s)\rho_n(t)\| \leq \kappa \pi \ep_n.$$
     We claim that the sequence $(\rho_n)$ cannot be perturbed  to a sequence  of representations satisfying ~\eqref{FFF}. Seeking a contradiction, assume that there exists a sequence of representations $\pi_n:\G \to GL_{r_n}(\C)$ such that $\lim_n \|\rho_n(s)-\pi_n(s)\|=0$ for all $s\in F$. In this case, it follows from Remark~\ref{rem5} that
      $L_{\rho_n} \cong L_{\pi_n}$ for all sufficiently large $n$ and hence
      $y=[L_{\rho_n}]-[L_{\psi_n}]=[L_{\pi_{n}}]-[L_{\psi_n}]\in \widetilde{K}^0(M)$. By \cite{Milnor}, all Chern classes of a flat complex bundle vanish over $\Q$, so that the Chern character of $y$ is zero as both $\pi_n$ and $\psi_n$ are representations.
     We obtained a contradiction, since  $y$ is a nontorsion element of $\widetilde{K}^0(M)$. \end{proof}
     \begin{remark} There is a related approach to Theorem~\ref{thm:main-result} based on the notion K-area of a Riemannian manifold introduced by Gromov \cite[\S 4]{Gromov:curvature}.  Indeed, if $M$ is an orientable closed connected Riemannian manifold with non-positive sectional curvature of dimension  $\dim (M)=2m$ and residually finite fundamental group, then $M$ has infinite K-area  by \cite[Ex.(v') p.25]{Gromov:curvature}, see also ~\cite{Hanke-Schick}.
     This yields a sequence $(E_n,\nabla_n)$ with $\lim_n \|R^{\nabla_n}\|=0$ and  top Chern class $c_m(E_n)\neq 0$. Having this sequence at hand, one proceeds like in the second part of the proof of Theorem~\ref{thm:main-result}.
     \end{remark}

\section{Obstructions to $C^*$-stability}\label{sec:semi}

In this section we establish Theorem~\ref{thm-cstable} as a consequence of a stronger result which assumes weaker forms of stability,
 see Theorem~\ref{thm:2nd} and Theorem~\ref{cor2nd}.

 We refer the reader to \cite{Kas:inv} for the definitions and the basic properties of the various KK-theory groups introduced by Kasparov. We will freely employ the same notation as  there.
Thus if $X$ is a locally compact group and $A$, $B$ are separable $C^*$-algebras we write
$RKK^0(X;A,B)$ for ${\CR}KK(X;C_0(X)\otimes A,C_0(X)\otimes B)$ and
$RK^0(X;B)$   for $RKK^0(X;\C,B)$.

Let $\underline{E}\G$ be the classifying space for proper actions of $\G$, \cite{BCH}. It is known that $\underline{E}\G$ admits a locally compact model, \cite{Kasparov-Skandalis-Annals}.
Let us recall  that $\G$ has a $\gamma$-element if there exists a $\G-C_0(\underline{E}\G)$-algebra $A$ in the sense of \cite{Kas:inv} and two elements $d\in KK_G(A,\C)$ and $\eta\in KK_{\G}(\C,A)$ (called Dirac and dual-Dirac elements, respectively) such that the element $\gamma=\eta\otimes_Ad\in KK_\G(\C,\C)$ has the property that $p^*(\gamma)=1\in \mathcal{R}KK_\G(\underline{E}\G;C_0(\underline{E}\G),C_0(\underline{E}\G))$ where
$p:\underline{E}\G \to \text{point}$, \cite{Tu:gamma}.
Let $B$ be a separable $C^*$-algebra endowed with the trivial $\G$-action.
Consider the dual assembly map with coefficients in $B$:
$$\alpha: KK_{{\G}}(\C,B) \to RKK_{\G}^0(\underline{E}{\G};\C,B)$$
defined by $\alpha(y)=p^*(y)$ where $p:\underline{E}{\G} \to \text{point}$.
As in \cite{Kas:inv}, we write $RKK_{\G}^0(\underline{E}{\G};\C,B)$ for ${\CR}KK_{\G}(\underline{E}{\G};C_0(\underline{E}{\G}),C_0(\underline{E}{\G})\otimes B).$
.

We need the following result which is essentially due to Kasparov, \cite[Th.6.5]{Kas:inv}. It was discussed in  \cite[Thm.7.1]{Meyer-Nest:BC}, \cite[Thm.23]{Emerson-Meyer} and \cite[Lem.10.1]{Emerson-Nica}.
\begin{theorem}[Kasparov]\label{thm:Kas1}
  If ${\G}$ is a countable discrete group that admits a $\gamma$-element,
  then the dual assembly map $\alpha:KK_{\G}(\C,B)\to {R}KK_{\G}^0(\underline{E}{\G};\C,B)$ is split surjective with kernel \mbox{$(1-\gamma)KK_{\G}(\C,B)$}.
\end{theorem}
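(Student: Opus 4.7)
The plan is to construct an explicit splitting $\sigma$ of $\alpha$ out of the dual-Dirac/Dirac pair $(\eta,d)$, and then extract both split surjectivity and the kernel identification from two routine compositional identities. This is exactly the template of the Dirac--dual-Dirac method.

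First I would define the splitting. The $\G$-$C_0(\underline{E}\G)$-algebra structure on $A$ lets one view the Dirac class $d\in KK_\G(A,\C)$ as a class $\tilde d\in \mathcal{R}KK_\G(\underline{E}\G;A,C_0(\underline{E}\G))$, and hence to take an $\mathcal{R}KK$-product with a given $z\in RKK_\G^0(\underline{E}\G;\C,B)$. Composing the result on the left with the dual-Dirac $\eta\in KK_\G(\C,A)$ then produces a homomorphism
\[\sigma: RKK_\G^0(\underline{E}\G;\C,B)\longrightarrow KK_\G(\C,B),\]
the point being that $\eta$ lives in ordinary $KK_\G$ and ``absorbs'' the $C_0(\underline{E}\G)$-dependence of the $\mathcal{R}KK$-factor, leaving an honest element of $KK_\G(\C,B)$.

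Second I would verify the two identities $\alpha\circ\sigma=\mathrm{id}$ and $\sigma\circ\alpha(y)=\gamma\otimes y$. For the first, applying $\alpha=p^*$ to $\sigma(z)$ gives $p^*(\eta)\otimes p^*(d)\otimes z$, and associativity together with the defining property $p^*(\gamma)=p^*(\eta)\otimes_A p^*(d)=1\in \mathcal{R}KK_\G(\underline{E}\G;C_0(\underline{E}\G),C_0(\underline{E}\G))$ of a $\gamma$-element collapses this to $1\otimes z=z$. For the second, because $\alpha(y)=p^*(y)$ is pulled back from a point and carries no nontrivial $C_0(\underline{E}\G)$-dependence, the $\mathcal{R}KK$-product $\tilde d\otimes p^*(y)$ is identified with the ordinary $KK$-product $d\otimes y\in KK_\G(A,B)$; composing on the left with $\eta$ gives $\eta\otimes_A d\otimes y=\gamma\otimes y$. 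Once these two identities are in hand, the theorem follows formally: if $\alpha(y)=0$ then $\sigma\alpha(y)=\gamma\otimes y=0$, so $y=y-\gamma\otimes y=(1-\gamma)\otimes y\in(1-\gamma)KK_\G(\C,B)$, and conversely $\alpha((1-\gamma)\otimes y)=(1-p^*(\gamma))\otimes\alpha(y)=0$.

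The main obstacle is the second identity $\sigma\circ\alpha=\gamma\cdot(-)$, because one must track carefully how the $C_0(\underline{E}\G)$-algebra structure on $A$ interacts with pullbacks from a point and with associativity of Kasparov products: the point is that $\mathcal{R}KK$-products reduce to ordinary $KK$-products whenever one factor is pulled back via $p^*$. This is formal bookkeeping in Kasparov's framework, but it must be done precisely in the right category of $C_0(\underline{E}\G)$-algebras. Equivalently, the argument can be organized through the functorial properties of the descent and forgetful maps relating $KK_\G$ and $\mathcal{R}KK_\G(\underline{E}\G;-,-)$, which is the point of view taken in Meyer--Nest and Emerson--Meyer, but the Dirac/dual-Dirac computation above is the shortest self-contained route.
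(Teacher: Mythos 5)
You should first be aware that the paper itself gives no proof of Theorem~\ref{thm:Kas1}: it is quoted from Kasparov \cite{Kas:inv} and the expositions in \cite{Meyer-Nest:BC}, \cite{Emerson-Meyer} and \cite{Emerson-Nica}, all of which run exactly the Dirac--dual-Dirac argument you outline. So your overall strategy --- build a splitting $\sigma$ from the pair $(\eta,d)$, prove $\alpha\circ\sigma=\mathrm{id}$ from $p^*(\gamma)=1$, prove $\sigma\circ\alpha=\gamma\otimes(-)$ from the fact that $\mathcal{R}KK$-products with classes pulled back from a point reduce to ordinary $KK$-products, and then read off split surjectivity and the kernel formally --- is the intended one, and your formal derivation of ``split surjective with kernel $(1-\gamma)KK_\G(\C,B)$'' from those two identities is correct.

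There is, however, a concrete slip in how you define $\sigma$. You propose to promote $d\in KK_\G(A,\C)$ to $\tilde d\in\mathcal{R}KK_\G(\underline{E}\G;A,C_0(\underline{E}\G))$, form $\tilde d\otimes_{C_0(\underline{E}\G)}z$, and then let $\eta$ ``absorb'' the $C_0(\underline{E}\G)$-dependence. Composing with $\eta\in KK_\G(\C,A)$ on the left only changes the source of a class; it cannot remove the $C_0(\underline{E}\G)$ sitting in the target, so your candidate $\sigma(z)$ lands in $KK_\G(\C,C_0(\underline{E}\G)\otimes B)$ rather than $KK_\G(\C,B)$, and for noncompact $\underline{E}\G$ there is no unital map $C_0(\underline{E}\G)\to\C$ to repair this. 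The $C_0(\underline{E}\G)$-dependence is actually killed by the $C_0(\underline{E}\G)$-\emph{balanced} tensor product with $A$: Kasparov's descent operation sends $z\in\mathcal{R}KK_\G(\underline{E}\G;C_0(\underline{E}\G),C_0(\underline{E}\G)\otimes B)$ to a class $\sigma_A(z)$ in $KK_\G\bigl(A\otimes_{C_0(\underline{E}\G)}C_0(\underline{E}\G),\,A\otimes_{C_0(\underline{E}\G)}(C_0(\underline{E}\G)\otimes B)\bigr)\cong KK_\G(A,A\otimes B)$, and one then sets $\sigma(z)=\eta\otimes_A\sigma_{A}(z)\otimes_{A\otimes B}(d\otimes 1_B)\in KK_\G(\C,B)$. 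With this corrected $\sigma$ your two identities go through as you describe: $\sigma_{A}(p^*(y))=1_A\otimes y$ gives $\sigma(\alpha(y))=\eta\otimes_A d\otimes y=\gamma\otimes y$, and $p^*(\gamma)=1$ gives $\alpha(\sigma(z))=z$, the latter being the nontrivial commutation of $p^*(\sigma_A(z))$ past the Dirac element that Kasparov's Theorem~6.5 carries out. So the gap is localized and repairable, but as written the splitting does not typecheck.
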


  By universality of $\underline{E}{\G}$, there is a ${\G}$-equivariant map (unique up to homotopy) $\sigma: E{\G} \to \underline{E}{\G}$.
  It induces a map $\sigma^*:{R}KK_{\G}^0(\underline{E}{\G};\C,B)\to {R}KK_{\G}^0({E}{\G};\C,B)$. Recall that $\QQ=\bigotimes_n M_n$  is universal UHF algebra and $K_0(\mathcal{Q})=\Q$ and $K_1(\mathcal{Q})=0$. We can identify
the representable rational K-theory group $RK^0(X;\Q)$ with $RK^0(X;\QQ)$, as in \cite{Kasparov-conspectus}. We view $\mathcal{Q}$ as a trivial ${\G}$-algebra.
\begin{corollary}\label{cor:KY}
  Let ${\G}$ be a countable discrete group that admits a $\gamma$-element.
  Let $B$ be a separable trivial ${\G}$-algebra  such that $B\cong B \otimes \mathcal{Q}$.
  Then the composition $$\gamma KK_{\G}(\C,B)\hookrightarrow KK_{\G}(\C,B)\stackrel{\alpha}\longrightarrow {R}KK_{\G}^0(\underline{E}{\G};\C,B)\stackrel{\sigma^*}\longrightarrow {R}KK_{\G}^0({E}{\G};\C,B)$$
   is  a surjective map.
\end{corollary}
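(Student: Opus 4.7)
The plan is to split the corollary into two pieces: use Theorem~\ref{thm:Kas1} to absorb the initial factor $\gamma$ and the $\alpha$ step, and then verify that the comparison map $\sigma^*$ is surjective under the $\QQ$-stability of $B$.

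First, by Theorem~\ref{thm:Kas1} the dual assembly map $\alpha$ is already surjective with kernel $(1-\gamma)KK_{\G}(\C,B)$. For any $y\in KK_{\G}(\C,B)$, the decomposition $y=\gamma y+(1-\gamma)y$ together with $\alpha((1-\gamma)y)=0$ yields $\alpha(\gamma y)=\alpha(y)$. Hence the restriction of $\alpha$ to the subgroup $\gamma KK_{\G}(\C,B)$ is still surjective onto $RKK^0_{\G}(\underline{E}\G;\C,B)$, so the corollary reduces to proving that
\[
\sigma^*: RKK^0_{\G}(\underline{E}\G;\C,B)\longrightarrow RKK^0_{\G}(E\G;\C,B)
\]
is surjective.

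For this I would first show that $\sigma^*$ is rationally surjective. A Chern character of Baum--Connes--L\"uck type identifies $RKK^0_{\G}(\underline{E}\G;\C,B)\otimes\Q$ with a direct sum indexed by conjugacy classes $[g]$ of finite-order elements of $\G$, in which the identity-class summand is precisely $RKK^0_{\G}(E\G;\C,B)\otimes\Q$ and $\sigma^*\otimes\Q$ is realized as projection onto this summand; in particular $\sigma^*\otimes\Q$ is surjective. On the other hand, the hypothesis $B\cong B\otimes\QQ$ together with $K_0(\QQ)=\Q$ and $K_1(\QQ)=0$ forces $K_*(B)$ to be a $\Q$-vector space, and a standard induction along a $\G$-cocompact filtration of $E\G$ (using the UCT on each subquotient) propagates this to $RKK^0_{\G}(E\G;\C,B)$ itself. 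The cokernel of $\sigma^*$ is therefore simultaneously torsion and $\Q$-divisible, hence zero, which yields the integral surjectivity.

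The substantive step in this plan is the rational surjectivity of $\sigma^*$. This is essentially a folklore consequence of the equivariant Chern character for proper actions, but verifying it in the stated generality (locally compact $\underline{E}\G$, arbitrary countable discrete $\G$ admitting a $\gamma$-element, separable $\QQ$-stable coefficient algebra $B$) requires either citing L\"uck's equivariant Chern character theorem or running the argument by hand along a $\G$-cocompact exhaustion of $\underline{E}\G$ together with the centralizer description of $RKK^0_{F}(\C,B)$ at finite subgroups $F\leq\G$. Once this is granted, the two remaining steps --- Kasparov's splitting of $\alpha$ modulo $(1-\gamma)$ and the $\Q$-vector space upgrade --- are formal and require no further geometric input.
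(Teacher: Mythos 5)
Your first reduction --- using Theorem~\ref{thm:Kas1} to absorb the factor $\gamma$ and the map $\alpha$, so that the corollary reduces to surjectivity of $\sigma^*$ --- is exactly the paper's first step. The divergence, and the problem, is in how you propose to prove that $\sigma^*:{R}KK_{\G}^0(\underline{E}{\G};\C,B)\to {R}KK_{\G}^0({E}{\G};\C,B)$ is surjective. You defer the entire content to a ``folklore'' delocalized Chern character identifying ${R}KK_{\G}^0(\underline{E}{\G};\C,B)\otimes\Q$ with a sum over conjugacy classes of torsion elements, with $\sigma^*\otimes\Q$ the projection onto the identity summand, and you concede that you have not verified this in the stated generality. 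That is a genuine gap, not a routine citation: the equivariant Chern character of L\"uck type is formulated for equivariant (co)homology theories on proper $\G$-CW-complexes, and transporting it to ${R}KK$-theory with an arbitrary separable $\QQ$-stable coefficient algebra $B$ over a merely locally compact, possibly non-$\G$-compact model of $\underline{E}{\G}$ (where $\varprojlim^1$ issues intervene) is precisely the verification that cannot be waved through. The paper avoids all of this by working on the dual side: by \cite[p.275-6]{BCH} the map $\sigma_*:RK^{\G}_0(E{\G})\to RK^{\G}_0(\underline{E}{\G})$ is \emph{rationally injective}; since $K_0(B)$ is a $\Q$-vector space (this is where $B\cong B\otimes\QQ$ enters), a rationally injective homomorphism of abelian groups induces a surjection on $\Hom(-,K_0(B))$ (kill the torsion kernel, then extend using that a divisible group is an injective $\Z$-module); and the universal coefficient theorems of Kasparov--Skandalis identify $\sigma^*$ with $(\sigma_*)^*$ on these $\Hom$ groups. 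No Chern character is needed.

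A second, smaller error: ``the cokernel of $\sigma^*$ is torsion and $\Q$-divisible, hence zero'' is not a valid implication --- $\Q/\Z$ is torsion and divisible. What would save the conclusion is that both ${R}KK$ groups are modules over $K_0(\QQ)\cong\Q$, hence honest $\Q$-vector spaces, and $\sigma^*$ is $\Q$-linear, so its image is a $\Q$-subspace and the cokernel is a $\Q$-vector space, which is torsion only if it is zero; equivalently, rational surjectivity of a $\Q$-linear map of $\Q$-vector spaces is already surjectivity. You should say this explicitly rather than rely on ``torsion plus divisible implies zero''. But even with that repaired, your proof stands or falls with the unproved rational surjectivity of $\sigma^*$, so as written the argument is incomplete.
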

\begin{proof}
 It was shown in \cite[p.275-6]{BCH} that $\sigma$ induces a rationally injective homomorphism
 \[\sigma_*: RK^{\G}_0(E{\G}) \to RK^{\G}_0(\underline{E}{\G}).\]
   It follows that the map
  \((\sigma_*)^*:\Hom(RK^{\G}_0(\underline{E}{\G}),K_0(B))\to \Hom( RK^{\G}_0(E{\G}),K_0(B))\) is surjective since $K_0(B)\cong K_0(B)\otimes \Q$.
  By the universal coefficient theorems  stated as Lemma 2.3 of \cite{Kasparov-Skandalis-cr}  and Lemma 3.4 of \cite{Kasparov-Skandalis-kk} applied for the coefficient algebra $B$, the horizontal maps in the commutative diagram
   \[\xymatrix{
{R}KK_{\G}^0(\underline{E}{\G};\C,B)\ar[d]_{\sigma^*}\ar[r]& \Hom(RK^{\G}_0(\underline{E}{\G}),K_0(B))\ar[d]^{(\sigma_*)^*}
\\
{R}KK_{\G}^0({E}{\G};\C,B)\ar[r]  & \Hom( RK^{\G}_0(E{\G}),K_0(B))
}
\]
  are bijections.
  It follows that the restriction map
 $\sigma^*:{R}KK_{\G}^0(\underline{E}{\G};\C,B)\to {R}KK_{\G}^0({E}{\G};\C,B)$ is surjective. The  statement follows now from Theorem~\ref{thm:Kas1}.
\end{proof}

Let us recall that a set of operators $S\subset \mathcal{L}(H)$ on a separable Hilbert space $H$ is quasidiagonal if there exists an approximate unit of projections $(p_n)_n$ of  $K(H)$ such that $$\lim_{n\to \infty}\|[a,p_n]\|= 0,\quad \text{for all}\,\, a\in S.$$
A representation   $\pi:A \to \mathcal{L}(H)$ of a separable $C^*$-algebra $A$  is quasidiagonal if the set $\pi(A)$ is quasidiagonal.
 A $C^*$-algebras $A$ is quasidiagonal if it admits a faithful quasidiagonal representation.

Let $A$, $B$ be separable $C^*$-algebras.
Any class $x\in KK(A,B)$ is  represented by some Cuntz pair, i.e. a pair
   of $*$-homomorphisms $\varphi,\psi:A \to M(K(H)\otimes B)$,  such that $\varphi(a)-\psi(a)\in K(H)\otimes B$, for all $a\in A$. Assume that $B$ is unital.
\begin{definition}[\cite{AA},\cite{CCC}]\label{def:kkqd}
An element $x\in KK(A,B)$ is quasidiagonal if it is represented by a Cuntz pair  $(\varphi,\psi)$  with the property that
there exists an approximate unit of projections $(p_n)_n$ of  $K(H)$ such that $\lim_{n\to \infty}\|[\psi(a),p_n\otimes 1_B]\|= 0$,  for all $a\in A$.
The quasidiagonal elements form a subgroup of $KK(A,B)$,  denoted by $KK(A,B)_{qd}$. 
For other contexts it will be useful to modify the definition by asking for the existence of an approximate unit of $K(H)\otimes B$ consisting of projections $(q_n)$ such that $\lim_{n\to \infty}\|[\psi(a),q_n]\|= 0$,  for all $a\in A$.
  \end{definition}
  \begin{remark}\label{remark:qdd}
  (a) If $\theta:A \to D$ is a $*$-homomorphism, then $\theta^*[\varphi,\psi]= [\varphi\circ \theta,\psi\circ \theta]$ and hence $$\theta^*(KK(D,B)_{qd})\subset KK(A,B)_{qd}.$$

    (b) Let $D, B$ be  separable unital $C^*$-algebras with $B$ nuclear. Fix a faithful unital representation $\psi_0:D \to M(K(H))$ such that $\psi_0(D)\cap K(H)=\{0\}.$  Then any element $x\in KK(D,B)$ is represented by a Cuntz pair $(\varphi,\psi)$ where $\psi=\psi_0 \otimes 1_B: A \to M(K(H)\otimes B)$, \cite{Skandalis:K-nuclear}. Therefore, if  $D$ is quasidiagonal,
     then $\psi_0(D)$ is a quasidiagonal subset of $M(K(H))$ and hence
      $KK(D,B)=KK(D,B)_{qd}$.
\end{remark}

Let us recall that  a countable discrete group $\G$ is \emph{quasidiagonal} if there is a faithful unitary representation $\pi:\G \to U(H)$ such that the set $\pi(\G)$ is quasidiagonal. Note that residually finite groups or more generally maximally periodic groups (abbreviated MAP) are quasidiagonal. Residually amenable groups are quasidiagonal by \cite{TWW}.
Quasidiagonality of a group $\G$ is at least formally weaker than  quasidiagonality of the full group $C^*$-algebra $C^*(\G)$. For example, the question of quasidiagonality of $C^*(\G)$ is completely open
for all residually finite,  infinite, property T groups.

\begin{proposition}[\cite{CCC}]\label{cor:qd-embed} 
For   a countable discrete group $\G$, the following assertions are equivalent:
\begin{itemize}
  \item[(i)] $\G$ is quasidiagonal.
  \item[(ii)] $\lambda_{\G}$ is weakly contained in a quasidiagonal representation $\pi$ of $\G$.
  \item[(iii)] The canonical  map $q_{\G}:C^*(\G)\to C_r^*(\G)$ factors through  a unital  quasidiagonal C*-algebra.
\end{itemize}
\end{proposition}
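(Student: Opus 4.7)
My plan is to establish the three equivalences by proving $(ii)\Leftrightarrow(iii)$ directly, noting that $(ii)\Rightarrow(i)$ is essentially tautological, and then tackling the substantive direction $(i)\Rightarrow(ii)$.

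For $(ii)\Rightarrow(iii)$, given a quasidiagonal representation $\pi:\G\to U(H)$ with $\lambda_{\G}$ weakly contained in $\pi$, I set $D:=\pi(C^*(\G))\subseteq\mathcal{L}(H)$. Since the set of $T\in\mathcal{L}(H)$ that asymptotically commute with a fixed increasing approximate unit of projections in $K(H)$ is a norm-closed unital $*$-subalgebra of $\mathcal{L}(H)$, the quasidiagonality of the set $\pi(\G)$ transfers to the generated $C^*$-algebra $D$. The inequality $\|\pi(a)\|\geq\|\lambda_{\G}(a)\|$ implicit in weak containment then yields the factorization $C^*(\G)\twoheadrightarrow D\twoheadrightarrow C_r^*(\G)$ required by (iii). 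Conversely, for $(iii)\Rightarrow(ii)$, fix a faithful representation $\rho:D\hookrightarrow\mathcal{L}(H)$ of the given unital quasidiagonal $D$ with $\rho(D)$ a quasidiagonal set, and take $\pi:=\rho\circ\theta$ where $\theta:C^*(\G)\to D$ is the first factorizing $*$-homomorphism; then $\pi(\G)\subseteq\rho(D)$ is a quasidiagonal set and $\|\pi(a)\|=\|\theta(a)\|_D\geq\|\lambda_{\G}(a)\|$. Finally, $(ii)\Rightarrow(i)$ is immediate, since any $\pi$ weakly containing $\lambda_{\G}$ is automatically injective on $\G$ and so is itself a faithful quasidiagonal representation.

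The substantive direction is $(i)\Rightarrow(ii)$. Given a faithful quasidiagonal representation $\pi_0:\G\to U(H_0)$ witnessed by an approximate unit $(p_n)\subset K(H_0)$ of projections with $\|[p_n,\pi_0(g)]\|\to 0$ for every $g\in\G$, the goal is to produce a quasidiagonal representation $\pi$ with $\lambda_{\G}$ weakly contained in $\pi$. My approach is to build $\pi$ as a countable direct sum $\pi=\pi_0\oplus\bigoplus_n\sigma_n$ of separable quasidiagonal representations of $\G$, chosen so that $\sup_n\|\sigma_n(a)\|$ equals the supremum $\sup_\sigma\|\sigma(a)\|$ taken over all separable quasidiagonal representations (a single countable subfamily suffices by separability of $C^*(\G)$). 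Countable direct sums of quasidiagonal representations are themselves quasidiagonal via a diagonal assembly of the witnessing projections, and $\pi$ is faithful on $\G$ because $\pi_0$ is a summand. The essential remaining step is then the norm inequality $\sup_n\|\sigma_n(a)\|\geq\|\lambda_{\G}(a)\|$ for all $a\in C^*(\G)$, equivalently $\ker(\pi)\subseteq\ker(\lambda_{\G})$.

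The main obstacle is exactly this last inequality. For non-amenable $\G$, the reduced representation $\lambda_{\G}$ is itself typically not quasidiagonal, so $\sigma=\lambda_{\G}$ cannot be used directly. The required domination must instead be produced through an explicit approximation scheme based on the data $(\pi_0,(p_n))$: the matricial compressions $p_n\pi_0(\cdot)p_n$ yield asymptotically multiplicative almost-unitary maps on $p_nH_0$ which, after polar decomposition, give rise to approximate unitary representations $\phi_n:\G\to U(k_n)$ with $k_n=\mathrm{rank}(p_n)$; these must then be assembled into a genuine quasidiagonal representation of $\G$ whose norms collectively dominate $\|\lambda_{\G}(a)\|$. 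Carrying out this assembly is the technical heart of the proof, and it fundamentally relies on the hypothesis that $\pi_0(\G)$ is quasidiagonal as a set of operators rather than merely an MF-image of $\G$.
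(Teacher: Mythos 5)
Your treatment of the easy implications is fine and matches what one would do: $(ii)\Rightarrow(iii)$ by taking $D=\pi(C^*(\G))$ (correctly noting that the operators asymptotically commuting with a fixed approximate unit of projections form a closed unital $*$-subalgebra, so quasidiagonality passes from $\pi(\G)$ to $D$), $(iii)\Rightarrow(ii)$ by composing with a faithful quasidiagonal representation of $D$, and $(ii)\Rightarrow(i)$ by injectivity on $\G$. Note that the paper does not reprove this proposition but quotes it from \cite{CCC}; the entire content lies in $(i)\Rightarrow(ii)$, and there your argument stops exactly where the proof has to begin. You reduce to showing that quasidiagonal representations collectively dominate $\lambda_\G$ in norm and then declare this "the technical heart" without carrying it out. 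Worse, the route you gesture at cannot work as described: the compressions $p_n\pi_0(\cdot)p_n$ and their unitary corrections $\phi_n$ are asymptotically isometric for the norm of $\pi_0$, so any representation "assembled" from them is at best weakly equivalent to $\pi_0$ itself, and a faithful unitary representation of a nonamenable group need not weakly contain $\lambda_\G$ (the trivial representation of $\mathbb{F}_2$ tensored with nothing illustrates the general phenomenon: faithfulness on the group gives no control over $\ker$ in $C^*(\G)$). So no amount of cutting down $\pi_0$ by the $p_n$ can produce the inequality $\sup_n\|\sigma_n(a)\|\geq\|\lambda_\G(a)\|$.

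The missing idea is a tensor-power argument on positive definite functions, not an approximation scheme on $\pi_0$. Since $\lambda_\G$ is the GNS representation of $\delta_e$, it suffices to exhibit $\delta_e$ as a pointwise limit of normalized positive definite functions associated to quasidiagonal representations. For each $s\neq e$ choose a unit vector $\xi_s$ with $\pi_0(s)\xi_s\neq\xi_s$; then $\varphi_s:=\tfrac12\bigl(1+\langle\pi_0(\cdot)\xi_s,\xi_s\rangle\bigr)$ is a normalized positive definite function of $1_\G\oplus\pi_0$ with $|\varphi_s(s)|<1$, and for a finite set $F\subset\G\setminus\{e\}$ the product $\prod_{s\in F}\varphi_s^{k_s}$, with $k_s$ large, is a normalized positive definite function of $\bigotimes_{s\in F}(1_\G\oplus\pi_0)^{\otimes k_s}$ that is uniformly small on $F$. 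This proves the classical fact that $\lambda_\G$ is weakly contained in $\bigoplus_n(1_\G\oplus\pi_0)^{\otimes n}$ for any faithful representation $\pi_0$ of a discrete group. The point of the quasidiagonality hypothesis is then only that this class of representations is stable under the operations used: the trivial representation is quasidiagonal, finite tensor products of quasidiagonal representations are quasidiagonal (via $p_n\otimes q_n$), and countable direct sums are quasidiagonal by the diagonal argument you already describe. Hence $\bigoplus_n(1_\G\oplus\pi_0)^{\otimes n}$ is a quasidiagonal representation weakly containing $\lambda_\G$, which is $(ii)$. Without this (or an equivalent) input, your proof of the one substantive implication is a restatement of the goal rather than a proof.
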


Let $j_\G$ and $j_{\G,r}$ be the descent maps of Kasparov \cite[Thm.3.11]{Kas:inv}.  Thus $\gamma\in KK_\G(\C,\C)$ gives
an element $j_\G(\gamma)\in KK(C^*(\G),C^*(\G))$ which induces a map $$j_\G(\gamma)^*=j_\G(\gamma) \otimes_{C^*(\G)}-:KK(C^*(\G),B) \to KK(C^*(\G),B).$$
 The image of $j_\G(\gamma)^*$ is usually denoted by $\gamma KK(C^*(G),B)$,
while $\gamma_r KK(C_r^*(\G),B)$ is defined similarly as the image of $ j_{\G,r}(\gamma)^*$.
 Since $G$ is discrete and acts trivially on $B$,
there is a canonical  isomorphism (called dual Green-Julg isomorphism), \cite{Kas:inv},  $$\kappa: KK_\G(\C,B)\stackrel{\cong}\longrightarrow KK(C^*(\G),B)$$  which is compatible with  the module structure over the group ring of $\G$. Moreover, by \cite[Lemma 11]{Proietti},
 for every $x\in KK_\G(\C,\C)$, the following diagram is commutative.
 \begin{equation}\label{eq:obs}
\xymatrix{
KK_\G(\C,B)\ar[r]^-{\kappa}\ar[d]^{x\otimes -}&  KK(C^*(\G),B)\ar[d]^{j_\G(x)\otimes -}
\\
KK_\G(\C,B)\ar[r]_-{\kappa}  & KK(C^*(\G),B)
}
\end{equation}
Kasparov \cite[3.12]{Kas:inv} has shown that
  the canonical surjection  $q_\G :C^*(\G)\to C_r^*(\G)$  induces an isomorphism of $\gamma$-parts
   $q_\G^*: \gamma_r KK(C_r^*(\G),B) \stackrel{\cong}\longrightarrow  \gamma  KK(C^*(\G),B)$. In particular:

\begin{proposition}[Kasparov \cite{Kas:inv}]\label{prop:KO}
 If $\G$ is a discrete countable group  that admits a $\gamma$-element, then
 $\gamma  KK(C^*(\G),B)\subset q_{\G}^* (KK(C_r^*(\G),B))$.
\end{proposition}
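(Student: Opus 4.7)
The plan is to derive this as an immediate consequence of the isomorphism result of Kasparov stated just before the proposition, so this is really a bookkeeping step rather than a substantive argument.

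First, I would recall the setup. By \cite[3.12]{Kas:inv}, the canonical surjection $q_\G : C^*(\G) \to C_r^*(\G)$ induces, via $q_\G^*= - \otimes_{C^*(\G)} [q_\G]$, an \emph{isomorphism}
\[
q_\G^* : \gamma_r KK(C_r^*(\G), B) \stackrel{\cong}{\longrightarrow} \gamma KK(C^*(\G), B),
\]
where $\gamma_r KK(C_r^*(\G), B)$ denotes the image of $j_{\G,r}(\gamma)^*$ acting on $KK(C_r^*(\G),B)$. Since the hypothesis that $\G$ admits a $\gamma$-element is precisely what makes this isomorphism available, the map on the left has sensible meaning.

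Second, I would observe that $\gamma_r KK(C_r^*(\G), B)$ is, by its very definition, a subgroup of $KK(C_r^*(\G), B)$. Thus, given any class $x \in \gamma KK(C^*(\G), B)$, the surjectivity half of Kasparov's isomorphism provides a class $y \in \gamma_r KK(C_r^*(\G),B) \subseteq KK(C_r^*(\G),B)$ with $q_\G^*(y) = x$. This exhibits $x$ as lying in the image of $q_\G^* : KK(C_r^*(\G),B) \to KK(C^*(\G),B)$, which is exactly the inclusion asserted by the proposition.

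There is no real obstacle here, since everything is packaged into the preceding Kasparov isomorphism; the only thing worth emphasizing is that the proposition records the weaker (one-sided, non-isomorphism) statement needed in the sequel, namely factorization through the reduced $C^*$-algebra, without requiring the sharper identification of the $\gamma$-part. For the applications that follow (the proof of Theorem~\ref{thm-cstable} via Theorem~\ref{cor2nd}), this factorization is the version actually used, which is why it is isolated as a proposition.
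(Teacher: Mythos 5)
Your proposal is correct and follows exactly the route the paper intends: the proposition is stated as an immediate consequence (``In particular'') of Kasparov's isomorphism $q_\G^*\colon \gamma_r KK(C_r^*(\G),B)\stackrel{\cong}{\longrightarrow}\gamma KK(C^*(\G),B)$, and your observation that $\gamma_r KK(C_r^*(\G),B)$ sits inside $KK(C_r^*(\G),B)$ is precisely the bookkeeping step that yields the stated inclusion. Nothing is missing.
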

We gave an exposition of  Proposition~\ref{prop:KO} in  \cite{CCC}. By \cite[Thm.3.4]{Kas:inv}, see also \cite[p.313]{Kasparov-Skandalis-kk},   there is natural descent isomorphism
  $$ \lambda^{\G}:{R}KK_{\G}^0({E}\G;\C,B)\stackrel{\cong}\longrightarrow  {R}KK^0(B\G;\C,B).$$
 Let $\nu: KK^0(C^*({\G}),B) \to {R}K^0(B\G;B)$ be the map
\begin{equation*}
 \nu=\lambda^{\G}\circ\sigma^*\circ \alpha \circ \kappa^{-1} : KK(C^*({\G}),B)\cong KK_{\G}(\C,B) \to {R}KK_{\G}^0({E}\G;\C,B)\cong  {R}KK^0(B\G;\C,B).
\end{equation*} Note that $\sigma^*\circ \alpha =p^*$ where $p:E\G \to point$.
Abusing terminology, we shall also  refer to $\nu$ as the dual assembly map.

As in \cite{CCC}, we rely on Kubota's idea \cite{Kubota2} of using a quasidiagonal $C^*$-algebra intermediate between $C_r^*(\G)$ and $C^*(\G)$ which
 strengthens significantly the construction of  almost flat K-theory classes based on {$K$}-{quasidiagonality} of $C^*(\G)$,  introduced in \cite{AA}.
 
In particular Thm. 4.6 from \cite{CCC} admits a version with coefficients:
\begin{theorem}\label{cor:qd-embed-kk}
  Let $\G$ be a countable discrete quasidiagonal group and let $B$ be a separable nuclear unital $C^*$-algebra.
  If $\G$ admits a $\gamma$-element, then $ \gamma KK(C^*(\G),B) )\subset KK(C^*(\G),B)_{qd}$. It follows that
$\nu (KK^0(C^*(\G),B))= \nu(KK(C^*(\G),B)_{qd})$ and hence
  \(\nu(KK(C^*(\G),B)_{qd})=RK^0(B\G;B) \) if $B\cong B \otimes \mathcal{Q}$.
\end{theorem}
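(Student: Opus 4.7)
The plan is to reduce the coefficient version to the toolkit already built up in the paper, following exactly the strategy of Thm.~4.6 of \cite{CCC} but keeping the coefficient algebra $B$ visible throughout. The first assertion $\gamma KK(C^*(\G),B)\subset KK(C^*(\G),B)_{qd}$ is the heart of the matter; the remaining assertions about $\nu$ will follow formally from Corollary~\ref{cor:KY}.

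First I would invoke the quasidiagonality of $\G$: by Proposition~\ref{cor:qd-embed}(iii), the canonical map $q_\G:C^*(\G)\to C_r^*(\G)$ factors as $q_\G=\rho\circ \pi$ through a unital separable quasidiagonal $C^*$-algebra $D$, with $\pi:C^*(\G)\to D$ and $\rho:D\to C_r^*(\G)$ unital $*$-homomorphisms. Next I would apply Kasparov's Proposition~\ref{prop:KO} to obtain $\gamma KK(C^*(\G),B)\subset q_\G^*\bigl(KK(C_r^*(\G),B)\bigr)=\pi^*\bigl(\rho^*(KK(C_r^*(\G),B))\bigr)\subset \pi^*(KK(D,B))$. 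Since $B$ is nuclear and $D$ is separable, unital and quasidiagonal, Remark~\ref{remark:qdd}(b) yields $KK(D,B)=KK(D,B)_{qd}$, and then Remark~\ref{remark:qdd}(a) (naturality of quasidiagonality under pullback along $\pi$) gives $\pi^*(KK(D,B)_{qd})\subset KK(C^*(\G),B)_{qd}$. Chaining these inclusions produces $\gamma KK(C^*(\G),B)\subset KK(C^*(\G),B)_{qd}$, which is the first statement.

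For the second assertion, I would transport $\gamma$ across the dual Green--Julg isomorphism $\kappa$: the commutative diagram \eqref{eq:obs} applied to $x=\gamma\in KK_\G(\C,\C)$ identifies $\kappa\bigl(\gamma KK_\G(\C,B)\bigr)=\gamma KK(C^*(\G),B)$. Combining with the definition $\nu=\lambda^\G\circ\sigma^*\circ\alpha\circ\kappa^{-1}$, one obtains
\[
\nu\bigl(\gamma KK(C^*(\G),B)\bigr)=\lambda^\G\Bigl(\sigma^*\bigl(\alpha(\gamma KK_\G(\C,B))\bigr)\Bigr).
\]
Assuming now $B\cong B\otimes\mathcal{Q}$, Corollary~\ref{cor:KY} shows that the composition $\sigma^*\circ\alpha$ restricted to $\gamma KK_\G(\C,B)$ surjects onto $RKK_\G^0(E\G;\C,B)$, and the descent isomorphism $\lambda^\G$ then identifies the image with $RKK^0(B\G;\C,B)=RK^0(B\G;B)$. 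Thus $\nu\bigl(\gamma KK(C^*(\G),B)\bigr)=RK^0(B\G;B)$, and since $\gamma KK(C^*(\G),B)\subset KK(C^*(\G),B)_{qd}\subset KK(C^*(\G),B)$, sandwiching gives the chain of equalities
\[
RK^0(B\G;B)=\nu\bigl(\gamma KK(C^*(\G),B)\bigr)\subset \nu\bigl(KK(C^*(\G),B)_{qd}\bigr)\subset \nu\bigl(KK(C^*(\G),B)\bigr)\subset RK^0(B\G;B).
\]

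The step I expect to require the most care is verifying that the pullback along $\pi:C^*(\G)\to D$ preserves quasidiagonality of Cuntz pairs in the coefficient-algebra setting of Definition~\ref{def:kkqd} (and in particular checking that the modified form of the definition, using an approximate unit of $K(H)\otimes B$ commuting with $\psi$, is the appropriate one to invoke Remark~\ref{remark:qdd}(b) when $B$ is nuclear and infinite-dimensional). Everything else is a fairly mechanical translation with coefficients of the arguments already laid out in \cite{CCC} together with Kubota's intermediate-algebra trick \cite{Kubota2} used above via Proposition~\ref{cor:qd-embed}.
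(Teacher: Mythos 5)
Your proof is correct and follows essentially the same route as the paper: the first inclusion is obtained exactly as there, via the factorization of $q_\G$ through a quasidiagonal algebra $D$ from Proposition~\ref{cor:qd-embed}, Kasparov's Proposition~\ref{prop:KO}, and Remark~\ref{remark:qdd}, while the statements about $\nu$ follow from Theorem~\ref{thm:Kas1} and Corollary~\ref{cor:KY} together with the sandwich $\gamma KK\subset KK_{qd}\subset KK$. The only (minor) difference is that the paper obtains $\nu(KK(C^*(\G),B))=\nu(KK(C^*(\G),B)_{qd})$ without assuming $B\cong B\otimes\mathcal{Q}$, by observing that $\alpha$ (hence $\nu$) annihilates $(1-\gamma)KK_\G(\C,B)$ so that $\nu(KK)=\nu(\gamma KK)$ unconditionally, whereas your sandwich establishes that equality only under the extra hypothesis $B\cong B\otimes\mathcal{Q}$; this is immaterial for the applications, where $B$ is $\mathcal{Q}$ or $C(\T)\otimes\mathcal{Q}$.
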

\begin{proof} The factorization
$C^*(\G) \stackrel{q_D}\longrightarrow D  \to C_r^*(\G)$ of $q_{\G}$
with $D$ unital and quasidiagonal given by Proposition~\ref{cor:qd-embed} in conjunction with Remark~\ref{remark:qdd} implies that
\[q_{\G}^* (KK(C_r^*(\G),B))\subset q_D^*(KK(D,B))= q_D^*(KK(D,B)_{qd})\subset KK(C^*(\G),B)_{qd}.\]
From this and Proposition~\ref{prop:KO} we obtain that $ \gamma KK(C^*(\G),B) )\subset KK(C^*(\G),B)_{qd}$.

By  Theorem~\ref{thm:Kas1},  $\alpha$ vanishes on $(1-\gamma) KK_{\G}(\C,B))$. Since the diagram \eqref{eq:obs} is commutative, this group is mapped to  $(1-\gamma) KK^0(C^*(\G),B)$ by $\kappa$   and hence
$$\nu (KK^0(C^*(\G),B))= \nu(\gamma KK^0(C^*(\G),B))= \nu(KK(C^*(\G),B)_{qd}). $$
   By  Corollary~\ref{cor:KY}, the map $\nu$ is surjective if  $B\cong B \otimes \mathcal{Q}$.
\end{proof}
\begin{definition}\label{def:B-stable}
	 Let  $\G$ be a countable discrete group and let $\mathcal{B}$ be a class of unital $C^*$-algebras.
	\begin{itemize}
\item[(a)]	 $\G$ is called locally $GL(\mathcal{B})$-stable if  for any sequence of unital maps
 $\{\varphi_n:\G \to U(B_n)\}$ with $B_n\in\mathcal{B}$  and
 \begin{equation}\label{a1}\lim_{n\to \infty} \|\varphi_n(st)-\varphi_n(s)\varphi_n(t)\|=0, \quad \text{for all}\,\, s,t\in \G,\end{equation}
 there exists a sequence of homomorphisms $\{\pi_n:\G\to GL(B_n)\}$ such that
\begin{equation}\label{a2}\lim_{n\to \infty}  \|\varphi_n(s)-\pi_n(s)\|=0, \quad \text{for all}\,\, s\in \G.\end{equation}
 \item[(b)] $\G$ is called locally $U(\mathcal{B})$-stable if for any sequence of unital maps
 $\{\varphi_n:\G \to U(B_n)\}$ that satisfies \eqref{a1}, there exists a sequence of homomorphisms $\{\pi_n:\G\to U(B_n)\}$  satisfying \eqref{a2}.
 \item[(c)] If $\mathcal{B}=\{B_n: n\geq 1\},$ property (a) will be also called  local $\{GL(B_n): n\geq 1\}$-stability and property (b)  local $\{U(B_n): n\geq 1\}$-stability
 \end{itemize}
\end{definition}
\begin{remark}\label{remm}
The following observations are immediate.
\begin{itemize}
\item[(i)] If $\G$ is locally $U(\mathcal{B})$-stable, then $\G$ is locally $GL(\mathcal{B})$-stable.
\item[(ii)] If $\mathcal{B}$ is the class of all unital separable $C^*$-algebras, then $\G$ is locally $U(\mathcal{B})$-stable if and only if $\G$ is $C^*$-stable in the sense \cite{ESS-published} or equivalently $C^*(\G)$ is weakly semiprojective.
\item[(iii)] If $\mathcal{B}=\{M_n(\C): n \in \mathbb{N}\}$, then $\G$ is locally $U(\mathcal{B})$-stable if and only if
$\G$ is matricially stable in the sense of \cite{ESS-published}, \cite{CCC}, or locally stable in the sense of Definition~\ref{def:main}.
\item[(iv)] Let  $X$ be a compact metrizable space.  It is easily verified  that if an MF group $\G$ is
 locally $\{GL_n(C(X)): n \in \mathbb{N}\}$-stable, then $\G$ is also locally $\{GL_n(\C): n \in \mathbb{N}\}$-stable.

\end{itemize}
\end{remark}

\begin{lemma}\label{lem:easy} Let $\G$ be a countable discrete MF group.\begin{itemize}
\item[(i)] If  $\G$ is locally $\{U_n(\C): n \in \mathbb{N}\}$-stable, then $\G$ is MAP and hence quasidiagonal.
 \item[(ii)] If $\G$ is finitely generated  and $\G$ is locally $\{GL_n(\C): n \in \mathbb{N}\}$-stable, then $\G$ is residually finite and hence quasidiagonal.
\end{itemize}
\end{lemma}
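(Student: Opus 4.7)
The plan is to unpack the MF hypothesis into a concrete asymptotically multiplicative sequence $\{\rho_n:\G\to U(n)\}$ that separates the non-identity elements of $\G$, then feed this sequence into the stability hypothesis to obtain genuine representations $\{\pi_n\}$ which, by proximity to $\{\rho_n\}$, continue to separate. From there the conclusions on MAP and quasidiagonality in (i), and residual finiteness in (ii), will follow by standard considerations plus, in (ii), Malcev's theorem.

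First I will recall that the MF condition (as reformulated immediately after Theorem~\ref{thm-cstable}) is equivalent to the existence of unital maps $\rho_n:\G\to U(n)$ with $\|\rho_n(st)-\rho_n(s)\rho_n(t)\|\to 0$ for all $s,t\in\G$ and with the \emph{separating} property that $\limsup_n\|\rho_n(s)-1_n\|>0$ for every $s\neq 1$; this is simply the unraveling of the injectivity of $\G\hookrightarrow\mathbf{U}/\mathbf{N}$.

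For Part (i), I will apply local $\{U_n(\C):n\in\mathbb{N}\}$-stability to this $\{\rho_n\}$ to obtain unitary representations $\pi_n:\G\to U(n)$ with $\|\rho_n(s)-\pi_n(s)\|\to 0$ for all $s$. The triangle inequality combined with the separating property then forces $\limsup_n\|\pi_n(s)-1_n\|>0$ for each $s\neq 1$, so for every such $s$ there is some $n$ with $\pi_n(s)\neq 1_n$; this is exactly the MAP condition. To exhibit quasidiagonality, I will choose for each $s\neq 1$ an index $n(s)$ with $\pi_{n(s)}(s)\neq 1_{n(s)}$ and form the countable direct sum $\pi=\bigoplus_s \pi_{n(s)}$ on $H=\bigoplus_s\C^{n(s)}$; this is a faithful unitary representation whose image is commuted exactly (not merely asymptotically) by the increasing finite-rank projections onto finite partial sums, so $\pi(\G)$ is quasidiagonal.

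For Part (ii), the same argument with local $\{GL_n(\C):n\in\mathbb{N}\}$-stability produces group homomorphisms $\pi_n:\G\to GL_n(\C)$ that separate non-identity elements of $\G$. The new ingredient is Malcev's theorem: since $\G$ is finitely generated, each $\pi_n(\G)$ is a finitely generated subgroup of $GL_n(\C)$, hence residually finite. A finite quotient of $\pi_n(\G)$ in which $\pi_n(s)$ survives pulls back along $\pi_n$ to a finite quotient of $\G$ in which $s$ survives, proving residual finiteness, after which quasidiagonality follows as in Part~(i). The main subtlety is precisely the downgrade from $U(n)$ to $GL_n(\C)$ in (ii): without further input one would only conclude that $\G$ is residually linear, and the finite-generation hypothesis (absent in (i)) is essential exactly so that Malcev's theorem can be applied to the possibly non-discrete images $\pi_n(\G)\subset GL_n(\C)$ and upgrade residually linear to residually finite.
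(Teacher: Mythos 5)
Your argument is correct and is essentially the paper's own proof, just written out in more detail: the paper likewise unpacks MF into an embedding of $\G$ into $\mathbf{U}/\mathbf{N}$, uses stability to upgrade this to an embedding into $\prod_n U(n)$ (giving MAP) in case (i), respectively into $\prod_n GL_n(\C)$ in case (ii), and then invokes Malcev's theorem together with finite generation to pass from residually linear to residually finite. The extra details you supply (the $\limsup$ separation estimate surviving the perturbation, and the explicit faithful quasidiagonal direct-sum representation) are exactly the steps the paper leaves implicit.
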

\begin{proof} (i) Since $\G$ is MF, it embeds  in $U(\prod_n M_{n}/\bigoplus_n M_{n}).$ By local $\{U_n(\C): n \in \mathbb{N}\}$-stability we obtained an embedding of $\G$ in $\prod_n U(n)$.
 (ii) By local $\{GL_n(\C): n \in \mathbb{N}\}$-stability we obtained an embedding of $\G$ in $\prod_n GL_n(\C)$. If $\G$ is finitely generated, it follows by Malcev's theorem \cite[6.4.13]{Brown-Ozawa} that  $\G$ is residually finite and hence quasidiagonal.
\end{proof}

\begin{notation}\label{not:gl-flat}
For a compact space $Y,$ let $RK^0(Y;B)_{\text{flat}}$ be the subgroup of $RK^0(Y;B)$  generated by locally trivial bundles
 with typical fiber  projective $B$-modules $fB^k$,  $f^2=f=f^*\in M_k(B),$ constructed from
 finite open covers $\mathcal{U}=(U_i)_{i\in I}$ of $Y$ and  1-Cech cocyles  $v_{ij}:U_i\cap U_j \to GL(fM_k(B)f)$  with the property that each function $v_{ij}$ is constant.
 Recall that we identify $RK^0(Y;B)$ with the operator K-theory group $K_0(C(Y)\otimes B)$.
 For  a compact subspace $Y$  of $B\G$, we denote by $\nu_Y$ the composition of the dual assembly map $\nu:KK(C^*(\G),B)\to   RK^0(B\G;B)$  with the restriction map $RK^0(B\G;B)\to RK^0(Y;B)$.
For  $B=C(\T)\otimes \QQ$, we will use the natural isomorphisms
\begin{equation}\label{eq:long}
	RK^0(Y;B)\cong K_0( C(Y)\otimes C(\T)\otimes \QQ)\cong RK^0(Y\times \T;\Q).
\end{equation}
\end{notation}
\begin{proposition}\label{prop:prop} Let $\mathcal{B}$ be a class of unital separable $C^*$-algebras such that
 if $B \in \mathcal{B}$, and $f^2=f=f^*\in M_k(B)$  for $k\in \N$, then $fM_k(B)f \in  \mathcal{B}$.
Let $\G$ be a discrete countable group and let $Y\subset B\G$ be a finite connected CW-complex. If $\G$ is locally $GL(\mathcal{B})$-stable, then for any $B\in \mathcal{B}$,
\begin{equation}\label{eqn:flatt}
	\nu_Y(KK(C^*(\G),B)_{\text{qd}})\subset RK^0(Y;B)_{\text{flat}}
\end{equation}
More generally, if $(B_k)$ in an increasing sequence of  C*-algebras  in $\mathcal{B}$ sharing the same unit and
 $B$  is the $C^*$-completion of ${\bigcup_k B_k},$ then
\begin{equation}\label{eqn:flattt}
	\nu_Y(KK(C^*(\G),B)_{\text{qd}})\subset \varinjlim\nolimits_{k} RK^0(Y;B_k)_{\text{flat}} \subset RK^0(Y;B)_{\text{flat}}.
\end{equation}
	\end{proposition}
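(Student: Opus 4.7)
The plan is to begin with a quasidiagonal representative $(\varphi,\psi)$ of $x\in KK(C^*(\G),B)_{\text{qd}}$ together with an approximate unit of projections $(p_n)\subset K(H)$ asymptotically commuting with $\psi(a)$, and hence also with $\varphi(a)$, for every $a\in C^*(\G)$; the latter commutation follows because $\varphi(a)-\psi(a)\in K(H)\otimes B$ and $p_n\otimes 1_B$ acts as an asymptotic unit on $K(H)\otimes B$. Setting $r_n=\mathrm{rank}(p_n)$ and compressing by $p_n\otimes 1_B$ yields unital completely positive maps $\varphi_n,\psi_n:C^*(\G)\to M_{r_n}(B)$ that are asymptotically multiplicative, and the closure hypothesis on $\mathcal{B}$ forces $M_{r_n}(B)\in\mathcal{B}$. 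Adapting the argument used inside the proof of Theorem~\ref{thm:main-resultA} (cf.~\cite{AA}, \cite{CCC}) to coefficients in $B$ gives the computational formula
\[
\nu_Y(x)\;=\;[L_{\varphi_n}|_Y]-[L_{\psi_n}|_Y]\qquad\text{in}\qquad RK^0(Y;B)
\]
for every sufficiently large $n$, where the bundles are built from the Mishchenko cocycle $(s_{ij})$ of a finite cover of $Y$ via the construction of Definition~\ref{atiyah2}(b) (Notation~\ref{not:misi}).

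The next step is to unitarize and apply stability. Since $\varphi_n(s)\varphi_n(s^{-1})\to 1_{M_{r_n}(B)}$ (and similarly for $\psi_n$) by the asymptotic commutation and the fact that $\varphi,\psi$ are $*$-homomorphisms, I may replace $\varphi_n|_\G,\psi_n|_\G$ by nearby unital maps $\tilde\varphi_n,\tilde\psi_n:\G\to U(M_{r_n}(B))$ which still satisfy $\tilde\varphi_n(s^{-1})=\tilde\varphi_n(s)^{*}$. Local $GL(\mathcal{B})$-stability applied to the sequence $\{M_{r_n}(B)\}\subset\mathcal{B}$ then produces group homomorphisms $\pi_n^\varphi,\pi_n^\psi:\G\to GL(M_{r_n}(B))$ with $\|\tilde\varphi_n(s)-\pi_n^\varphi(s)\|\to 0$ and $\|\tilde\psi_n(s)-\pi_n^\psi(s)\|\to 0$ for every $s\in\G$. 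Invoking Remark~\ref{rem5} on the finite set $F=\{s_{ij}\}$ of Notation~\ref{not:misi} gives, for all $n$ large,
\[
[L_{\varphi_n}|_Y]=[L_{\pi_n^\varphi}|_Y],\qquad [L_{\psi_n}|_Y]=[L_{\pi_n^\psi}|_Y].
\]
Because $\pi_n^\varphi,\pi_n^\psi$ are genuine group homomorphisms and $(s_{ij})$ is a $\G$-valued 1-cocycle, the constant families $(\pi_n^\varphi(s_{ij}))_{i,j}$ and $(\pi_n^\psi(s_{ij}))_{i,j}$ are honest constant 1-Cech cocycles in $GL(M_{r_n}(B))$; consequently the bundles $L_{\pi_n^\varphi}|_Y$ and $L_{\pi_n^\psi}|_Y$ lie in $RK^0(Y;B)_{\text{flat}}$, proving the first inclusion \eqref{eqn:flatt}.

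For the direct-limit refinement in \eqref{eqn:flattt}, the strategy is to descend everything to finite levels of the filtration. Norm-density of $\bigcup_k M_{r_n}(B_k)$ in $M_{r_n}(B)$ lets me select $k(n)\nearrow\infty$ and unital maps $\tilde\varphi_n,\tilde\psi_n:\G\to U(M_{r_n}(B_{k(n)}))$ that agree with the unitarized compressions to within any prescribed tolerance on a prescribed exhaustion $F_n\nearrow\G$, and which remain asymptotically multiplicative on $\G$. Since $M_{r_n}(B_{k(n)})\in\mathcal{B}$, another application of local $GL(\mathcal{B})$-stability yields homomorphisms $\pi_n:\G\to GL(M_{r_n}(B_{k(n)}))$ close to $\tilde\varphi_n$, and similarly for $\tilde\psi_n$; the resulting flat bundles live in $RK^0(Y;B_{k(n)})_{\text{flat}}$ and their difference represents $\nu_Y(x)$ in $\varinjlim_k RK^0(Y;B_k)_{\text{flat}}$. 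The final inclusion into $RK^0(Y;B)_{\text{flat}}$ is immediate, since a constant cocycle in $GL(fM_\ell(B_k)f)$ remains constant under the enlargement $B_k\hookrightarrow B$. The main obstacle I anticipate is establishing the formula $\nu_Y(x)=[L_{\varphi_n}|_Y]-[L_{\psi_n}|_Y]$ without the simplification available in Theorem~\ref{thm:main-resultA} that $\psi_n$ is already a true representation: one has to trace Kasparov's description of $\nu$ as the Kasparov product with the Mishchenko class and verify directly that for quasidiagonal Cuntz pairs with coefficients in $B$ this product is computed on compressions by the Atiyah-type projections $\ell_{\varphi_n}$ and $\ell_{\psi_n}$.
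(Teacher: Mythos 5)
Your proposal is correct and follows essentially the same route as the paper's proof: represent the quasidiagonal class by a Cuntz pair, compress by the asymptotically commuting projections, use the formula $\nu_Y(x)=(\id{}\otimes\varphi^{(0)}_n)_\sharp(p)-(\id{}\otimes\varphi^{(1)}_n)_\sharp(p)$ from \cite[Prop.2.5]{AA} (the step you rightly flag as the crux), unitarize, apply local $GL(\mathcal{B})$-stability, and observe that the resulting constant cocycles give flat classes, with the direct-limit case handled by descending to finite filtration levels via norm density. The only cosmetic differences are that the paper works in corners $f(K(H)\otimes B)f$ rather than directly in $M_{r_n}(B)$ (to handle non-unital representatives) and verifies the final K-theoretic identification by hand rather than citing Remark~\ref{rem5}.
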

	\begin{proof}   We prove \eqref{eqn:flatt} first. Let
	$\jmath:\G\hookrightarrow U(C^*(\G))$ be the natural inclusion.
	The orbit space of the left action of  $\G$ on $\widetilde{E\G}\times C^*(\G)$, defined by $s\cdot (\tilde{p},v)=(s\cdot \tilde{p},\jmath(s)v)$,
  is the total space of  a (flat) Hilbert $C^*(\G)$-module bundle $E\G\times_\jmath C^*(\G) \to  B\G$ denoted  by $L_\jmath$.

  The restriction of $L_\jmath$ to $Y$, denoted $L_Y,$ yields a self-adjoint projection $p=p_Y$
in matrices over the ring $ C(Y)\otimes \C[\G]$  constructed as follows. Let  $(U_i)_{i\in I}$ be finite covering of $Y$ by open sets
such that $L_\jmath$  is trivial on each $U_i$ and $U_i\cap U_j$ is connected. Using  trivializations
of $L_\jmath$  on $U_i$ one obtains group elements $s_{ij}\in G$
 which define  a 1-cocycle that is constant on each nonempty set $U_i\cap U_j$  and which represents $L_Y$.  Thus $s_{ij}^{-1}=s_{ji}$ and $s_{ij} \cdot s_{jk}=s_{ik}$ whenever $U_i\cap U_j \cap U_k \neq \emptyset.$
 Let $(\chi_i)_{i\in I}$
be positive continuous functions with $\chi_i$ supported in $U_i$ and such that
$\sum_{i\in I} \chi^2_i=1$. Set $m=|I|$ and let $(e_{ij})$ be the canonical matrix unit of $M_m(\CCC)$. Then $L_Y$ is represented by the selfadjoint projection
\begin{equation}\label{eqn-mish}
 p=\sum_{i,j\in I} e_{ij}\otimes \chi_i\chi_j\otimes s_{ij}\in M_{m}(\CCC)\otimes C(Y)\otimes C^*(\G).
\end{equation}
It was shown by Kasparov \cite[Lemma~6.2]{Kas:inv}, \cite{Kasparov:conspectus},
that the map \[\nu_Y:KK(C^*(\G),B) \stackrel{\nu}\longrightarrow K^0(B\G;B) \to K^0(Y;B) \cong K_0(C(Y)\otimes B)\] is given by $\nu_Y(x)=[p]\otimes_{C^*(\G)} x$.

The restriction of $\nu_Y$ to quasidiagonal KK-classes can be described as follows, see  \cite{AA}.
Each element $x\in KK(C^*(\G),B)_{qd}$
 is represented
  by  a pair of nonzero $*$-representations $\Phi^{(r)}:C^*(\G)\to M(K(H)\otimes B)$,  $r=0,1$, such that $\Phi^{(0)}(a)-\Phi^{(1)}(a)\in K(H)\otimes B$,
 $a\in C^*(\G)$, and with the property that
 there is an increasing approximate unit $(p_n)_n$ of $K(H)$ consisting of projections such that $(p_n\otimes 1_B)_n$ commutes asymptotically with both $\varphi^{(0)}(a)$ and $\varphi^{(1)}(a)$, for all $a\in C^*(\G)$.
  It follows that the compressions $\varphi^{(r)}_n(\cdot)=(p_n\otimes 1_B) \Phi^{(r)}(\cdot)(p_n\otimes 1_B)$
 are completely positive asymptotic homomorphisms $\varphi^{(r)}_n:C^*(\G)\to K(H)\otimes B$, $r=0,1$. Let $1$ denote the unit of $C^*(\G)$.
It is routine to  further perturb these maps to completely positive asymptotic homomorphisms such that
$f^{(r)}_n:=\varphi^{(r)}_n(1)$ are selfadjoint projections in matrices over $B$. Hence we can view these maps as unital completely positive maps $\varphi^{(r)}_n:C^*(\G) \to  D^{(r)}_n$, where each $D^{(r)}_n=f^{(r)}_n (K(H)\otimes B)f^{(r)}_n$ is Morita equivalent to $B$.
 As argued in  \cite[Prop.2.5]{AA}, if $\id{}=\id{M_m(C(Y))},$
 \begin{equation}\label{eqn:important}
  \nu_Y(x)=[p]\otimes_{C^*(\G)} x = (\id{}\otimes\varphi^{(0)}_n)_\sharp(p)-( \id{}\otimes\varphi^{(1)}_n)_\sharp(p),
 \end{equation}
 for all sufficiently large $n$.
Here $(\id{}\otimes\varphi^{(r)}_n)_\sharp(p)\in K_0(C(Y)\otimes B)$ is the  class of the  projection
$\chi_{(1/2,1]}(x_n^{(r)})$ obtained by continuous functional calculus from the approximate projection $x_n^{(r)}=\sum_{i,j\in I} e_{ij}\otimes \chi_i\chi_j\otimes \varphi^{(r)}_n(s_{ij})$.
Since $\varphi^{(r)}_n(s)$ are almost unitary elements and since $\G$ is countable, there exist
sequences of unital maps $\sigma^{(r)}_n: \G \to U(D^{(r)}_n)$ such that $\lim_n\|\varphi^{(r)}_n(s)-\sigma^{(r)}_n(s)\|=0$ for all $s\in \G$.
The sequences $(\sigma^{(r)}_n),$ $r=0,1$ are asymptotically multiplicative in the sense of \eqref{a1}.
If $\G$ satisfies Definition~\ref{def:B-stable} (b), then
 there exist sequences of  group homomorphisms  $\{\pi^{(r)}_n:\G \to GL(D_n^{(r)})\}$, $r=0,1$, such that
\begin{equation}\label{eqn:importanti}\lim_n\|\sigma^{(r)}_n(s)-\pi^{(r)}_n(s)\|=0 \end{equation}
 for all $s\in \G$.

Note that the projections $e^{(r)}_n$,  $r=0,1$, defined by
 \[e^{(r)}_n=(\id{}\otimes\pi^{(r)}_n)(p)=\sum_{i,j\in I} e_{ij}\otimes \chi_i\chi_j\otimes \pi^{(r)}_n(s_{ij})\in M_{m}(\CCC)\otimes C(Y)\otimes D_n^{(r)},\]
 correspond to flat  bundles since they are realized via the constant cocycles $\pi^{(r)}_n(s_{ij})$ and hence $[e^{(0)}_n]-[e^{(1)}_n]\in RK^0(Y,B)_{\text{flat}}$.
 From \eqref{eqn-mish}, \eqref{eqn:important} and \eqref{eqn:importanti} we deduce that
 $\nu_Y(x)= [e^{(0)}_n]-[e^{(1)}_n]$ for all sufficiently large $n$, since $\|\sigma^{(r)}_n(s_{ij})-\pi^{(r)}_n(s_{ij})\|\to 0$. We conclude that $\nu_Y(x)\in RK^0(Y,B)_{\text{flat}}$, as desired.

The proof for \eqref{eqn:flattt} is similar.
One makes a small adjustment to the previous argument.
 By assumption, ${\bigcup_k B_k}$ is dense in $B$.
Enumerate $\G=\{g_n\}_n$ and  modify  the maps  $(\sigma^{(r)}_n)$, $r=0,1$ as follows.
Choose selfadjoint projections $h^{(r)}_n$ in $K(H)\otimes B_{k{(n,r)}}$  for sufficiently large  $k{(n,r)}\in \N$
such that $h^{(r)}_n$ approximate $f^{(r)}_n$ and  $\mathrm{dist}(\varphi_n^{(r)}(s), U(h^{(r)}_n (K(H)\otimes B_{k{(n,r)}})h^{(r)}_n))<1/n $ for all $s\in \{g_1,...,g_n\}$.
Letting $D_n^{(r)}=h^{(r)}_n (K(H)\otimes B_{k{(n,r)}})h^{(r)}_n$, we can now
construct   $\sigma^{(r)}_n:\G \to U (D_n^{(r)})$ with the property that  $\lim_n\|\varphi^{(r)}_n(s)-\sigma^{(r)}_n(s)\|=0$ for all $s\in \G$. The rest of the proof is as in (i).
Note that $[e^{(0)}_n]-[e^{(1)}_n]\in RK^0(Y,B_{k(n,r)})_{\text{flat}}$.
 \end{proof}
 Let $X$ be a finite connected CW complex. Let $\G$ be a discrete countable group and let $\rho:\G \to GL_r(C(X))$ be a group homomorphism. Consider the flat bundle $L_\rho$ defined as $E\G\times_\rho C(X)^r \to B\G$ whose typical fiber is $C(X)^r$.
 Let $Y\subset B\G$ be a finite CW complex. We can view the restriction of $L_{\rho}$  to $Y$, denoted $L_{\rho}|_Y,$
 as a rank $r$ complex vector bundle over $Y\times X$.
 \begin{theorem}[Baird-Ramras, \cite{Ramras}]\label{Baird-Ramras}
 Suppose that $H^k(X;\Q)=0$ for $k>d$
  Then for all $m>0$, the Chern classes  $c_{d+m} (L_{\rho}|_Y)\in H^{2d+2m}(Y\times X;\Z)$ map to zero in $H^{2d+2m}(Y\times X;\Q)$.
  \end{theorem}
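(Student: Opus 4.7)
Plan. I would realize $L_\rho|_Y$ as a continuous family of flat vector bundles on $Y$ parametrized by $X$, and then constrain the Künneth bidegree of its rational Chern classes by exploiting that the family admits a connection whose curvature has no $(2,0)$-component on $Y\times X$.

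First, the homomorphism $\rho:\Gamma\to GL_r(C(X))$ is equivalent to a continuous family $\{\rho_x:\Gamma\to GL_r(\C)\}_{x\in X}$, and $L_\rho|_Y$ is canonically identified with the rank-$r$ complex vector bundle
\[
E=\bigl(\widetilde{Y}\times X\times\C^r\bigr)\big/\Gamma\;\longrightarrow\; Y\times X,\qquad \gamma\cdot(\tilde y,x,v)=\bigl(\gamma\cdot\tilde y,\,x,\,\rho_x(\gamma)v\bigr),
\]
where $\widetilde{Y}$ is the pullback of $E\Gamma$ along $Y\hookrightarrow B\Gamma$. With respect to a good cover $(U_i)_{i\in I}$ of $Y$ trivializing $\widetilde Y$ and the induced cover $(U_i\times X)$ of $Y\times X$, the transition cocycle of $E$ is the $GL_r(\C)$-valued function $(y,x)\mapsto \rho_x(s_{ij})$, which is constant in the $Y$-variable. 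Since Chern classes are homotopy invariants, I may replace $Y$ (and $X$, which in the intended application is already smooth) by a homotopy-equivalent smooth manifold and apply Chern-Weil theory.

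Next, because the cocycle $g_{ij}(y,x)=\rho_x(s_{ij})$ is $y$-independent, I can choose the local connection 1-forms $A_i$ on $U_i\times X$ to lie purely in bidegree $(0,1)$ with respect to the Künneth decomposition on $\Omega^\bullet(Y\times X)$; for instance, $A_i(y,x)=\sum_j \chi_j(y)\,g_{ji}(x)^{-1}d_X g_{ji}(x)$ for a partition of unity $(\chi_j)$ subordinate to $(U_j)$. Then $dA_i+A_i\wedge A_i$ contributes only $(1,1)$ and $(0,2)$ components (the $(2,0)$-part vanishes because $A_i$ has no $Y$-component), and one checks that the collection $(A_i)$ transforms correctly under the cocycle to define a global connection $\nabla$ on $E$. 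Hence the curvature decomposes as $R^\nabla=R^{(1,1)}+R^{(0,2)}$, with $R^{(2,0)}=0$.

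By Chern-Weil, $c_k(E)$ is represented by a degree-$k$ invariant polynomial in $R^\nabla$. Each monomial is a wedge of $k$ factors of bidegrees $(1,1)$ or $(0,2)$, so it has total bidegree $(b,2k-b)$ with $0\le b\le k$. Passing to rational cohomology and applying the Künneth formula,
\[
c_k(E)\;\in\;\bigoplus_{b=0}^{k} H^{b}(Y;\Q)\otimes H^{2k-b}(X;\Q).
\]
For $k=d+m$ with $m>0$ and any $0\le b\le k$, one has $2k-b\ge k=d+m>d$, so the hypothesis $H^{>d}(X;\Q)=0$ kills every summand. This yields $c_{d+m}(L_\rho|_Y)=0$ in $H^{2d+2m}(Y\times X;\Q)$, as required. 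The main subtlety is the smoothness framework required by Chern-Weil; this is handled either by smoothing $Y$ and $X$ up to homotopy equivalence, or by a \v Cech-de Rham description of the connection in which the bidegree constraint on the curvature cocycle persists unchanged.
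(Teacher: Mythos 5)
First, note that the paper does not actually prove this statement: it is imported verbatim from Baird and Ramras \cite{Ramras}, so the only meaningful comparison is with the proof in that reference. Your reconstruction of the strategy is essentially theirs: realize $L_\rho|_Y$ as a bundle over $Y\times X$ whose transition cocycle $(y,x)\mapsto\rho_x(s_{ij})$ is constant in $y$, build a connection whose local forms have no $dy$-component, observe that the curvature then has vanishing $(2,0)$-part in the K\"unneth bigrading, and conclude that every Chern--Weil monomial for $c_k$ lands in $\bigoplus_{b\le k} H^b(Y;\Q)\otimes H^{2k-b}(X;\Q)$ with $2k-b\geq k>d$. That bidegree count is correct and is the heart of the matter.

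The genuine gap is the step you defer to ``the main subtlety.'' Chern--Weil theory needs smooth transition functions, and the functions $x\mapsto\rho_x(s_{ij})$ are only continuous. Replacing $Y$ and $X$ by homotopy-equivalent smooth manifolds does not help: the pulled-back cocycle is still merely continuous in the $X$-variable, so the forms $g_{ji}^{-1}d_Xg_{ji}$ in your formula for $A_i$ do not exist. Nor can you mollify each function $x\mapsto\rho_x(s_{ij})$ separately, because that destroys the cocycle identities $g_{ij}g_{jk}=g_{ik}$ (equivalently, the fact that each $\rho_x$ is a homomorphism on the subgroup $\Gamma_0$ generated by the $s_{ij}$), and with them both the gluing of the $A_i$ into a global connection and the $y$-flatness you are exploiting. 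What is actually needed is to homotope the map $X\to\Hom(\Gamma_0,GL_r(\C))$ to one that is smooth as a map into the ambient matrix space while still taking values in the representation variety --- a typically singular algebraic set. Proving that continuous maps from manifolds into algebraic sets can be smoothed in this sense is precisely the main technical theorem of \cite{Ramras} (it is the ``smoothing maps into algebraic sets'' of their title), and the unspecified \v{C}ech--de Rham alternative you mention does not substitute for it. So the proposal captures the right curvature/bidegree mechanism but omits the ingredient that makes that mechanism applicable.
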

We shall apply Theorem~\ref{Baird-Ramras} for $X=\{\text{point}\}$ with $d=0$ and for $X=\T$ with $d=1$.
\begin{lemma}\label{lem:prod}
Let $X$ be a  topological space with $H^{2}(X,\Q)=0$ and such that  $H^{even}(X\times \TT,\Q)$ is generated as a ring by
$H^{0}(X\times \TT,\Q)\oplus H^{2}(X\times \TT,\Q)$. Then $H^{k}(X,\Q)= 0$ for all $k> 1$.
\end{lemma}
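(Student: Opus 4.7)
The plan is to apply the Künneth formula and exploit the fact that $H^*(\T,\Q)$ is the exterior algebra on a single generator of degree one.

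First I would unpack the ring structure of $H^{*}(X\times\T,\Q)$. By Künneth and the computation $H^{*}(\T,\Q)=\Q[t]/(t^{2})$ with $\deg t=1$, one gets
\begin{equation*}
H^{n}(X\times\T,\Q)\;\cong\;H^{n}(X,\Q)\;\oplus\;H^{n-1}(X,\Q)\cdot t,
\end{equation*}
as a module over $H^*(X,\Q)$. In particular, since $H^{2}(X,\Q)=0$ by hypothesis, one has $H^{2}(X\times\T,\Q)=H^{1}(X,\Q)\cdot t$, so every degree-$2$ class in $X\times\T$ has the form $\alpha\cdot t$ with $\alpha\in H^{1}(X,\Q)$.

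Next I would examine products. For any two classes $\alpha t,\beta t$ in $H^{2}(X\times\T,\Q)$ the product is $(\alpha t)(\beta t)=\alpha\beta\cdot t^{2}=0$ since $t^{2}=0$. Hence the subring of $H^{\mathrm{even}}(X\times\T,\Q)$ generated by $H^{0}\oplus H^{2}$ is concentrated in degrees $\le 2$. The ring-generation hypothesis then forces
\begin{equation*}
H^{2m}(X\times\T,\Q)=0\quad\text{for every }m\ge 2.
\end{equation*}

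Finally, I would translate this vanishing back to $X$ via Künneth. For every $m\ge 2$, the identification $H^{2m}(X\times\T,\Q)\cong H^{2m}(X,\Q)\oplus H^{2m-1}(X,\Q)$ yields $H^{2m}(X,\Q)=0$ and $H^{2m-1}(X,\Q)=0$. Combined with the standing assumption $H^{2}(X,\Q)=0$, this gives $H^{k}(X,\Q)=0$ for all $k>1$, completing the proof.

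The argument is essentially formal, so there is no real obstacle; the only mild subtlety is to remember that ``generated as a ring'' includes the unit in $H^{0}$ and sums, and that $t^{2}=0$ collapses every product of two degree-$2$ classes to zero, which is what makes the ring-generation hypothesis strong enough to force vanishing in all higher even degrees.
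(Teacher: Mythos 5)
Your argument is correct and follows essentially the same route as the paper: the K\"unneth isomorphism $H^*(X\times\T,\Q)\cong H^*(X,\Q)\otimes H^*(\T,\Q)$, the observation that $H^2(X,\Q)=0$ forces every degree-two class of $X\times\T$ to involve the degree-one generator $t$ of $H^*(\T,\Q)$, the vanishing of products of such classes because $t^2=0$, and then reading off $H^{2m}(X,\Q)=H^{2m-1}(X,\Q)=0$ from the K\"unneth decomposition of $H^{2m}(X\times\T,\Q)$ for $m\ge 2$. The only cosmetic difference is that you phrase the cup-product computation via the exterior-algebra presentation $\Q[t]/(t^2)$ rather than the explicit cross-product sign formula, which changes nothing.
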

\begin{proof}
Let $\pi_1:X\times \T \to X$ and $\pi_2:X\times \T \to \TT$ be the canonical projections.
By\cite[Thm.61.6]{Munkres},  the cross product $(x,y)\mapsto x\times y=\pi_1^*(x)\cup \pi_2^*(y)$  induces an isomorphism of algebras
\begin{equation}\label{eq:theta}
	\theta: H^{*}(X,\Q)\otimes_{_\Q} H^{*}(\TT,\Q) \to H^{*}(X\times \TT,\Q).\end{equation}
Since $H^{2}(X,\Q)=0$, we must have
\(H^{1}(X,\Q)\otimes_{_\Q} H^{1}(\TT,\Q)\cong H^{2}(X\times \TT,\Q).\)

 By \cite[Thm.61.5]{Munkres}, in the cohomology ring $H^*(X\times \TT,\Q)$, we have \[(\alpha\times\beta)\cup (\alpha'\times\beta')=(-1)^{(deg\beta)(deg\alpha')}(\alpha\cup \alpha')\times (\beta \cup \beta')\]
   for $\alpha,\alpha'\in H^*(X,\Q)$ and $\beta,\beta'\in H^*(\TT,\Q)$.
    It follows that in our situation $\gamma \cup \gamma' =0$ for all
   $\gamma,\gamma'\in H^{2}(X\times \TT,\Q) $ as  $\beta\cup \beta' =0$ for
   $\beta,\beta'\in H^{1}( \TT,\Q) $. Since $H^{even}(X\times \TT,\Q)$ is generated as a ring by $H^{0}(X\times \TT,\Q)\oplus H^{2}(X\times \TT,\Q)$ we deduce that $H^{2k}(X\times \TT,\Q)=0$ for all $k\geq 2$. Since
   \[H^{2k}(X,\Q) \oplus  H^{2k-1}(X,\Q) \cong  H^{2k}(X\times \TT,\Q)\]
   by \eqref{eq:theta}, it follows that $H^{j}(X,\Q)=\{0\}$ for all $j\geq 2$.
\end{proof}

 The following is the main result of the second part of our paper.
 Its first part (i) strengthens (at least formally) the main result of \cite{CCC}, since in principle, local $\{GL_n(\C): n \in \mathbb{N}\}$-stability is weaker  than local $\{U_n(\C): n \in \mathbb{N}\}$-stability.
\begin{theorem}\label{thm:2nd}
  Let $\G$ be a  countable discrete  group that  admits a $\gamma$-element. Suppose that  $\G$ is either MF and finitely generated  or  that $\G$ is quasidiagonal.
  \begin{itemize}
\item[(i)]If $\G$ is locally  $\{GL_n(\C): n \in \mathbb{N}\}$-stable, then  $H^{2k}(\G,\Q)=0$ for all $k>0$.
\item[(ii)] If $\G$ is  locally $\{GL_n(C(\T)): n \in \mathbb{N}\}$-stable, then  $H^{k}(\G,\Q)=0$ for all $k>1$.
\end{itemize}
\end{theorem}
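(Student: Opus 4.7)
The overall strategy is to apply Theorem~\ref{cor:qd-embed-kk} with well-chosen coefficient $C^*$-algebras $B$ (satisfying $B\cong B\otimes\mathcal{Q}$), combine this with the flatness output of Proposition~\ref{prop:prop}, and exploit the Baird--Ramras/Milnor vanishing of Chern classes to force cohomological vanishing for $\Gamma$. First I reduce to the quasidiagonal case: if $\Gamma$ is already quasidiagonal, there is nothing to do; otherwise $\Gamma$ is MF and finitely generated, and Lemma~\ref{lem:easy}(ii) yields residual finiteness (hence quasidiagonality) directly in case (i), while in case (ii) one first invokes Remark~\ref{remm}(iv) to pass from $\{GL_n(C(\mathbb{T}))\}$-stability to $\{GL_n(\mathbb{C})\}$-stability before applying Lemma~\ref{lem:easy}(ii).

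For part (i), take $B=\mathcal{Q}$, which satisfies $B\cong B\otimes\mathcal{Q}$, so Theorem~\ref{cor:qd-embed-kk} gives $\nu(KK(C^*(\Gamma),\mathcal{Q})_{qd})=RK^0(B\Gamma;\mathcal{Q})$. Writing $\mathcal{Q}=\overline{\bigcup_k M_{k!}(\mathbb{C})}$, formula~\eqref{eqn:flattt} of Proposition~\ref{prop:prop} shows that for every finite connected CW subcomplex $Y\subset B\Gamma$, $\nu_Y$ lands in $\varinjlim_k RK^0(Y;M_{k!}(\mathbb{C}))_{\text{flat}}$. Via Morita equivalence each generator is the K-class of a flat complex vector bundle over $Y$, and Milnor's theorem (equivalently Theorem~\ref{Baird-Ramras} with $X$ a point, $d=0$) shows that rational Chern classes of such a bundle vanish in positive degrees. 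Since the rational Chern character is an isomorphism on finite CW complexes, the image of the restriction $RK^0(B\Gamma;\mathcal{Q})\to H^{\text{even}}(Y,\mathbb{Q})$ lies in $H^0(Y,\mathbb{Q})$. Because cohomology classes on $B\Gamma$ are detected by their restrictions to finite subcomplexes, I conclude $H^{2k}(\Gamma,\mathbb{Q})=0$ for all $k>0$.

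For part (ii), Remark~\ref{remm}(iv) lets me apply (i) to deduce $H^{2k}(\Gamma,\mathbb{Q})=0$ for all $k>0$, and in particular $H^2(B\Gamma,\mathbb{Q})=0$. Now take $B=C(\mathbb{T})\otimes\mathcal{Q}$, again satisfying $B\cong B\otimes\mathcal{Q}$, and apply Theorem~\ref{cor:qd-embed-kk} together with Proposition~\ref{prop:prop} for $\mathcal{B}=\{M_n(C(\mathbb{T}))\}$ (closed under corners because projective $C(\mathbb{T})$-modules are free) and the sequence $B_k=M_{k!}(C(\mathbb{T}))$ with $B=\overline{\bigcup_k B_k}$. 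For any finite connected $Y\subset B\Gamma$, Morita equivalence identifies each flat $M_r(C(\mathbb{T}))$-bundle over $Y$ with a rank-$r$ complex vector bundle $L$ over $Y\times\mathbb{T}$ of the shape to which Theorem~\ref{Baird-Ramras} applies with $X=\mathbb{T}$ and $d=1$; the conclusion is $c_{1+m}(L)=0$ rationally for $m>0$, so the Chern character of $L$ lies in the subring of $H^{\text{even}}(Y\times\mathbb{T},\mathbb{Q})$ generated by $H^0\oplus H^2$. Combined with the rational Chern character isomorphism $RK^0(B\Gamma;C(\mathbb{T})\otimes\mathcal{Q})\cong H^{\text{even}}(B\Gamma\times\mathbb{T},\mathbb{Q})$ and the surjectivity of $\nu$, this forces $H^{\text{even}}(B\Gamma\times\mathbb{T},\mathbb{Q})$ to be generated as a ring by $H^0\oplus H^2$. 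Lemma~\ref{lem:prod} applied with $X=B\Gamma$ then yields $H^k(\Gamma,\mathbb{Q})=0$ for all $k>1$.

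The main technical obstacle lies in the passage in (ii) from the local statement that every restriction $\text{ch}(\nu(x))|_{Y\times\mathbb{T}}$ lies in the subring of $H^{\text{even}}(Y\times\mathbb{T},\mathbb{Q})$ generated by $H^0\oplus H^2$, to the global statement that $H^{\text{even}}(B\Gamma\times\mathbb{T},\mathbb{Q})$ itself is generated as a ring by $H^0\oplus H^2$. Handling this cleanly requires a careful inverse-limit argument over finite subcomplexes---rationally the $\varprojlim\nolimits^1$-obstructions are manageable for countable CW complexes---or, alternatively, an enhancement of the construction underlying Proposition~\ref{prop:prop} producing a globally flat $B$-bundle on $B\Gamma$ representing $\nu(x)$ directly from the stability hypothesis, so that the flatness statement and the subsequent Baird--Ramras application hold at the level of $B\Gamma\times\mathbb{T}$ rather than only on restrictions to finite $Y\times\mathbb{T}$.
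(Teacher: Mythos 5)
Your proposal follows essentially the same route as the paper: reduce to the quasidiagonal case via Lemma~\ref{lem:easy} and Remark~\ref{remm}(iv), apply Theorem~\ref{cor:qd-embed-kk} with $B=\mathcal{Q}$ (resp.\ $B=C(\T)\otimes\mathcal{Q}$), feed Proposition~\ref{prop:prop} into Theorem~\ref{Baird-Ramras} with $X$ a point (resp.\ $X=\T$, $d=1$), and finish part (ii) with Lemma~\ref{lem:prod}. The one step you flag as ``the main technical obstacle'' --- passing from the restriction statements on finite subcomplexes $Y_i$ to the global statement on $B\G$ --- is exactly where the paper inserts the Milnor $\varprojlim^1$ exact sequence: since $K_0(B)$ is divisible the $\varprojlim^1$ term vanishes and $RK^0(B\G;B)\cong\varprojlim RK^0(Y_i;B)$, and the commutative diagram comparing $\nu$ with the system $(\nu_i)$ through the Chern character (with bijective vertical arrows) then transfers the local conclusion $ch\circ\nu_i$ lands in $H^0$ (resp.\ $ch(\nu_i(y))=\exp(c_1(\nu_i(y)))$) to $B\G$ itself; so your first suggested resolution is the one the paper carries out, and no enhancement of Proposition~\ref{prop:prop} to a globally flat bundle on $B\G$ is needed.
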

\begin{remark}\label{rem13}
 Theorem~\ref{thm:2nd} applies to finitely generated linear groups and to residually finite hyperbolic groups, among others.  Indeed, linear groups are MF by \cite{CCC} and exact by \cite{GuenHW} and hence they admit a $\gamma$-element by \cite{Tu:gamma}.
In particular, since it was shown in  \cite{BLSW} that if $\G$ is a cocompact lattice in a real semisimple Lie group $G$ which is not locally isomorphic to either $SO(n,1)$ for $n$ odd or $SL_3(\R)$, then $b_{2i}(\G)>0$ for some $i>0$, and since $\G$ is finitely generated by cocompactness,
 we deduce that $\G$ is not $\{GL_n(\C): n \in \mathbb{N}\}$-stable.  Hyperbolic groups are exact by \cite{Brown-Ozawa}.
It is not known whether there exists a hyperbolic group which is not residually finite.

\end{remark}
\begin{proof} (for Thm.~\ref{thm:2nd})
By Lemma~\ref{lem:easy} and Remark~\ref{remm} (iv),  MF finitely generated groups that satisfy the assumptions from (i) or from (ii) are quasidiagonal. Thus we may assume that $\G$ is quasidiagonal for the remainder of the proof.

Let $B=\QQ$ or $B=C(\T)\otimes \QQ$.
By Theorem~\ref{cor:qd-embed-kk} we have a surjective map
 \[\nu:KK(C^*(\G),B)_{qd}\to RK^0(B\G;B) .\]
 If $B\G$ is written as the union of an increasing sequence
$(Y_i)_{i}$ of finite CW complexes, then as explained in the proof of Lemma 3.4 from \cite{Kasparov-Skandalis-kk}, there is a Milnor $\varprojlim^1$ exact sequence which gives
\begin{equation}\label{eq:milnor}
RK^0(B\G;B)\cong \varprojlim RK^0(Y_i;B)
\end{equation} since $K_0(B)$ is divisible. We denote by $\nu_i$ the composition of the map $\nu$ defined above with the restriction map $RK^0(B\G;B)\to RK^0(Y_i;B)$.

(i)  For $B=\QQ$, using the naturality of the Chern character, we have a commutative diagram
 \[\xymatrix{
{KK(C^*(\G),\QQ)_{qd}}\ar[dr]_-{(\nu_i)} \ar[r]^-{\nu}&{RK^0(B\G; \Q)}\ar[d]_{}\ar[r]^{ch}& {{H^{even}(B\G; \Q)}}\ar[d]^{}
\\
{}& {\varprojlim RK^0(Y_i; \Q)}\ar[r]^{ch}  & {\varprojlim{H^{even}(Y_i; \Q)}}
}
\] with bijective vertical maps. Recall that we identify $RK^0(Y;\Q)$ with $RK^0(Y;\QQ)$, \cite{Kasparov-conspectus}. Write $\QQ$ as inductive limit of matrix algebras $B_k\cong M_{k!}(\C)$. By Proposition~\ref{prop:prop}, the image of each $\nu_i$ is contained in $\varinjlim\nolimits_{k} RK^0(Y_i;B_k)_{\text{flat}}$ and hence by Theorem~\ref{Baird-Ramras} applied for $X=\{\text{point}\},$ the image of the map $ch\circ \nu_i: RK^0(Y_i; \Q) \to H^{even}(Y_i; \Q)$ is contained in $H^{0}(Y_i; \Q)$. Since $\nu$ is surjective, it follows  that $ch(RK^0(B\G; \Q))=H^{even}(B\G; \Q)$ and hence
$H^{2k}(\G,\Q)=0$ for all $k\geq 1$.
\vskip 4pt
(ii) Now let $B=C(\T)\otimes \QQ$ and write $B$ as the inductive limit of $B_k=C(\T)\otimes M_{k!}(\C)$.
  Just as above, we denote by $\nu_i$ the composition of the map $\nu$  with the restriction map $RK^0(B\G;B)\to RK^0(Y_i;B)\cong RK^0(Y_i\times \T; \Q)$. By \eqref{eq:long} and \eqref{eq:milnor}, we have
  \begin{align*} RK^0(B\G;B)=RK^0(B\G;C(\T)\otimes\mathcal{Q})\cong \varprojlim RK^0(Y_i;C(\T)\otimes \QQ) &\cong& \\ \varprojlim RK^0(Y_i\times \T; \QQ)\cong \varprojlim RK^0(Y_i\times \T; \Q)\cong RK^0(B\G\times \T; \Q).
  \end{align*}
  Abusing the notation, we denote the map $KK(C^*(\G),B)_{qd} \to RK^0(B\G\times \T; \Q)$  obtained by composing $\nu$ with the isomorphism  $RK^0(B\G;B) \cong RK^0(B\G\times \T; \Q)$ from above, again by $\nu$.

  We are going to argue that
each map $\nu_i:KK(C^*(\G),B)_{qd} \to  RK^0(Y_i\times \T; \Q)$
  has the property that the rational Chern classes satisfy $c_m(\nu_i(y))=0$ for all $y\in KK(C^*(\G),B)_{qd}$ and $m\geq 2$. Indeed, by
  Proposition~\ref{prop:prop}, the image of each $\nu_i$ is contained in $\varinjlim\nolimits_{k} RK^0(Y_i;B_k)_{\text{flat}}$ and hence by Theorem~\ref{Baird-Ramras} applied for $X=\T,$ we deduce the desired property.
It follows that,  for all $y\in KK(C^*(\G),B)_{qd},$ the Chern character of $\nu_i(y)$ can be computed as
\[
ch(\nu_i(y))=exp(c_1(\nu_i(y))).
\]
  Using the commutative diagram with bijective vertical arrows
  \[\xymatrix{
{KK(C^*(\G),C(\T)\otimes \QQ)_{qd}}\ar[dr]_-{(\nu_i)} \ar[r]^-{\nu}&{RK^0(B\G\times \T; \Q)}\ar[d]_{}\ar[r]^{ch}& {{H^{even}(B\G\times \T; \Q)}}\ar[d]^{}
\\
{}& {\varprojlim RK^0(Y_i\times \T; \Q)}\ar[r]^{ch}  & {\varprojlim{H^{even}(Y_i\times \T; \Q)}}
}
\]
we deduce that $ch(\nu(y))=exp(c_1(\nu(y)))$ for all $y\in KK(C^*(\G),B)_{qd}$ and hence
$ch(z)=exp(c_1(z))$ for all $z\in RK^0(B\G\times \T; \Q)$, by surjectivity of $\nu$. Since the Chern character is a rational isomorphism
 and the image of the first Chern class has degree two,  it follows that
$H^{even}(B\G\times \TT,\Q)$ is generated as a ring by
$H^{0}(B\G\times \TT,\Q) \oplus H^{2}(B\G\times \TT,\Q)$. The conclusion of the theorem follows now from Lemma~\ref{lem:prod}
since $H^*(B\G,\Q)\cong H^*(\G,\Q)$ and we already know that  $H^2(B\G,\Q)=0$ by part (i) above and Remark~\ref{remm} (iv).
\end{proof}
 
 The first part of the theorem below was already proved in our earlier paper \cite{CCC}. We include the statement for the sake of completeness.
\begin{theorem}\label{cor2nd}
  Let $\G$ be an MF countable discrete  group that  admits a $\gamma$-element.
  \begin{itemize}
\item[(i)]If $\G$ is  locally $\{U_n(\C): n \in \mathbb{N}\}$-stable, then  $H^{2k}(\G,\Q)=0$ for all $k>0$.
\item[(ii)] If $\G$ is  locally $\{U_n(C(\T)): n \in \mathbb{N}\}$-stable, then  $H^{k}(\G,\Q)=0$ for all $k>1$.
\end{itemize}
\end{theorem}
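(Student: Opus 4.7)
The plan is to reduce Theorem~\ref{cor2nd} to Theorem~\ref{thm:2nd} by checking that the hypotheses of the latter hold under the stronger unitary-stability assumptions. The crucial ingredients are: (a) $U$-stability formally implies $GL$-stability (Remark~\ref{remm}(i)); (b) automatic quasidiagonality of MF groups that are locally $\{U_n(\C)\}$-stable, provided by Lemma~\ref{lem:easy}(i); and (c) a simple pullback-and-evaluation argument reducing $C(\T)$-valued stability to scalar-valued stability, needed in (ii) so that Lemma~\ref{lem:easy}(i) can again be applied. Once quasidiagonality is in hand, Theorem~\ref{thm:2nd} supplies the two cohomological vanishing statements.

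For part (i), I would argue as follows. If $\G$ is MF and locally $\{U_n(\C)\}$-stable, then Lemma~\ref{lem:easy}(i) produces an embedding $\G \hookrightarrow \prod_n U(n)$ and in particular $\G$ is MAP, hence quasidiagonal. Combined with Remark~\ref{remm}(i), the group $\G$ is quasidiagonal and locally $\{GL_n(\C)\}$-stable, so Theorem~\ref{thm:2nd}(i) delivers $H^{2k}(\G,\Q)=0$ for all $k>0$.

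For part (ii), the main intermediate step is to show that local $\{U_n(C(\T))\}$-stability implies local $\{U_n(\C)\}$-stability. Given an almost-multiplicative sequence $\rho_n : \G \to U_n(\C)$, I plan to view it inside $U_n(C(\T))$ via the isometric inclusion $\C \hookrightarrow C(\T)$ as constant functions, apply the hypothesis to obtain homomorphisms $\tilde\pi_n : \G \to U_n(C(\T))$ uniformly close to $\rho_n$, and then compose with evaluation at a chosen point $t_0 \in \T$. Since $\mathrm{ev}_{t_0} : U_n(C(\T)) \to U_n(\C)$ is a contractive $\ast$-homomorphism, the maps $\pi_n := \mathrm{ev}_{t_0} \circ \tilde\pi_n$ are group homomorphisms into $U_n(\C)$ satisfying $\|\rho_n(s)-\pi_n(s)\| \leq \|\rho_n(s)-\tilde\pi_n(s)\| \to 0$ for every $s\in \G$. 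Thus $\G$ is locally $\{U_n(\C)\}$-stable, Lemma~\ref{lem:easy}(i) gives quasidiagonality, Remark~\ref{remm}(i) upgrades the hypothesis to local $\{GL_n(C(\T))\}$-stability, and Theorem~\ref{thm:2nd}(ii) then yields $H^{k}(\G,\Q)=0$ for all $k>1$.

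Since the heavy lifting — the dual assembly map with coefficients in $\mathcal{Q}$ and $C(\T)\otimes\mathcal{Q}$, the Baird--Ramras vanishing of higher Chern classes of flat bundles, and the ring-theoretic step in Lemma~\ref{lem:prod} — has already been carried out in Theorem~\ref{thm:2nd}, I do not expect any serious obstacle in the present corollary. The one point to verify carefully is the pullback-evaluation reduction in (ii), but this is immediate because evaluation is a $\ast$-homomorphism and therefore preserves both the group-homomorphism property of $\tilde\pi_n$ and the norm estimates.
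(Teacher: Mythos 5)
Your proposal is correct and follows essentially the same route as the paper: quasidiagonality of $\G$ via Lemma~\ref{lem:easy}(i), the formal implication from local $U$-stability to local $GL$-stability, and then an appeal to Theorem~\ref{thm:2nd}. The only difference is that you spell out the evaluation-at-a-point reduction from local $\{U_n(C(\T))\}$-stability to local $\{U_n(\C)\}$-stability that is needed to invoke Lemma~\ref{lem:easy}(i) in part (ii) -- a step the paper leaves implicit (compare Remark~\ref{remm}(iv) for the $GL$ case) -- so this is a useful clarification rather than a genuinely different argument.
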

\begin{proof} By Lemma~\ref{lem:easy}(i), $\G$ is quasidiagonal. Since local $U(\mathcal{B})$-stability implies local $GL(\mathcal{B})$-stability, Theorem~\ref{cor2nd} follows from Theorem~\ref{thm:2nd}.
\end{proof}
{\small

}
\end{document}